\newtheorem{theorem}{Theorem}[section]
\newtheorem{lemma}[theorem]{Lemma}
\theoremstyle{definition}
\newtheorem{definition}[theorem]{Definition}
\newtheorem{example}[theorem]{Example}
\theoremstyle{remark}
\newtheorem{remark}[theorem]{Remark}
\numberwithin{equation}{section}
\newcommand{\be}{\begin{equation}}
\newcommand{\ee}{\end{equation}}
\newcommand{\argone}{(q^{1/2}z+q^{-1/2}z^{-1})/2}
\newcommand{\awNf}[2]{\widetilde{N}_\mathrm{AW}\left({#1},\,{#2}\right)}
\newcommand{\awnf}[2]{\tilde{n}_\mathrm{AW}\left({#1},\,{#2}\right)}
\newcommand{\D}{{\mathcal{D}_q}}
\begin{document}

\title[Nevanlinna theory \& Askey-Wilson operator]{Nevanlinna theory of the Askey-Wilson divided difference operator}

%    Information for first author
\author{Yik-Man Chiang}
%    Address of record for the research reported here
\address{Department of Mathematics, Hong Kong University of Science and
Technology, Clear Water Bay, Kowloon, Hong Kong,\ China.}
\email{machiang@ust.hk}
%    \thanks will become a 1st page footnote.
\thanks{This research was supported in part by the Research Grants Council of the Hong Kong Special Administrative Region, China
(600806, 16306315). The second author was also partially supported by National Natural Science
Foundation of China (Grant No. 11271352) and by the HKUST PDF
Matching Fund.}

\author{Shao-Ji Feng}
\address{Academy of Mathematics and Systems Science, Chinese Academy
of Sciences, Beijing, 100080, P. R. China.}
\email{fsj@amss.ac.cn}
%    Current address
%    General info
\subjclass{Primary 33D99, 39A70, 30D35; Secondary 39A13}

%\date{25 Dec. 2007--07 February 2015; version 6.1}

%\dedicatory{This paper is dedicated to our authors.}

\keywords{Nevanlinna theory, Askey-Wilson operator, Deficiency, difference equations}

\begin{abstract}
This paper establishes a version of Nevanlinna theory based on Askey-Wilson divided difference operator for meromorphic functions of finite logarithmic order in the complex plane $\mathbb{C}$. A second main theorem that we have derived allows us to define an Askey-Wilson type Nevanlinna deficiency which gives a new interpretation 
that one should regard many important infinite products arising from the study of basic hypergeometric series as zero/pole-scarce. That is, their zeros/poles are indeed deficient in the sense of difference Nevanlinna theory. A natural consequence is a version of Askey-Wilosn type Picard theorem. We also give an alternative and self-contained characterisation of the kernel functions of the Askey-Wilson operator.  In addition we have established a version of unicity theorem in the sense of Askey-Wilson. This paper concludes with an application to difference equations generalising the Askey-Wilson second-order divided difference equation.
\end{abstract}

\maketitle
\tableofcontents

\section{Introduction}
\label{}
Without loss of generality, we assume $q$ to be a complex number with $|q|<1$. Askey and Wilson evaluated a $q-$beta integral (\cite[Theorem 2.1]{Askey:Wilson1985}) that allows them to construct a family of orthogonal polynomials (\cite[Theorems 2.2--2.5]{Askey:Wilson1985}) which are eigen-solutions of a second order difference equation (\cite[\S 5]{Askey:Wilson1985}) now bears their names. The divided difference operator $\mathcal{D}_q$ that appears in the second-order difference equation is called \textit{Askey-Wilson operator}. These polynomials, their orthogonality weight, the difference operator and related topics have found numerous applications and connections with a wide range of research areas beyond the basic hypergeometric series. These research areas include, for examples, Fourier analysis (\cite{Brown:Ismail1995}), interpolations (\cite{Magnus2009}, \cite{Ismail:Stanton2003_a}), combinatorics (\cite{Corteel:Williams2010}), Markov process (\cite{Bryc:Wesolowski2010}, \cite{Szablowski2011}), quantum groups (\cite{Koelink:Stokman2001b}, \cite{Noumi:Stokman2004}), double affine Hecke (Cherednik) algebras (\cite{Cherednik2001}, \cite{Koelink:Stokman2001a}).

In this paper, we show, building on the strengths of the work of Halburd and Korhonen \cite{HK-1}, \cite{HK-2} and as well as our earlier work on logarithmic difference estimates (\cite{Chiang:Feng2008}, \cite{Chiang:Feng2009}),  that there is a very natural function theoretic interpretation of the Askey-Wilson operator (abbreviated as $\mathrm{AW}-$operator) $\mathcal{D}_q$ and related topics. It is not difficult to show that the $\mathrm{AW}-$operator is well-defined on meromorphic functions. In particular, we show that there is a Picard theorem associates with the Askey-Wilson operator just as the classical Picard theorem is associated with the conventional differential operator $f^\prime$. Moreover, we have obtained a 
full-fledged Nevanlinna theory for slow-growing meromorphic functions with respect to the $\mathrm{AW}-$operator on $\mathbb{C}$ for which the associated Picard theorem follows as a special case, just as the  classical Picard theorem is a simple consequence of the classical Nevanlinna theory (\cite{Nevanlinna1925}, see also \cite{Nev70}, \cite{Hayman1964} and \cite{Yang1993}). This approach allows us to gain new insights into the $\mathcal{D}_q$ and
that give a radically different viewpoint from the established views on the value distribution properties of certain meromorphic functions, such as the Jacobi theta -functions, generating functions of certain orthogonal polynomials that were used in L. J. Rogers' derivation of the two famous Rogers-Ramanujan identities \cite{Rogers1895}, etc. We also characterise  the functions that lie in the kernel of the Askey-Wilson operator, which we can regard as the \textit{constants} with respect to the $\mathrm{AW}-$operator.

A value $a$ which is not assumed by a meromorphic function $f$ is called a \textit{Picard (exceptional) value}. The Picard theorem states that if a meromorphic $f$ that has three Picard values, then $f$ necessarily reduces to a constant. For each complex number $a$, Nevanlinna defines a deficiency $0\le \delta(a)\le 1$. If  $\delta(a)\sim 1$, then that means $f$ rarely assumes $a$. In fact, if $a$ is a Picard value of $f$, then $\delta (a)=1$. If $f$ assumes $a$ frequently, then $\delta(a)\sim 0$. Nevanlinna's second fundamental theorem implies that $\sum_{a\in\mathbb{C}}\delta(a)\le 2$ for a non-constant meromorphic function. Thus, the Picard theorem follows easily. 
For each $a\in\mathbb{C}$, we formulate a $q-$deformation of the Nevanlinna deficiency $\Theta_{\textrm{AW}}(a)$ and Picard value which we call $\textrm{AW}-$\textit{deficiency} and $\textrm{AW}-$\textit{Picard value} respectively. Their definitions will be given in \S\ref{S:deficient}.
The $\textrm{AW-}$deficiency  also satisfies the inequalities $0\le  \Theta_{\textrm{AW}}(a)\le 1$.  

A very special but illustrative example for $a\in\mathbb{C}$ to be an $\textrm{AW}-$Picard value of a certain $f$ if the pre-image of $a\in\mathbb{C}$ assumes the form, with some $z_a\in\mathbb{C}$,
	\begin{equation}
		\label{E:AW-except}
		x_n:=\frac12\big(z_a \,q^{n}+q^{-n}/z_a \big),\ 
% \stackrel{f}{\longmapsto}\ a, 
\qquad n\in\mathbb{N}\cup\{0\}. 
	\end{equation}
This leads to $\Theta_{\textrm{AW}}(a)=1$.

 We illustrate some such  $\textrm{AW}-$Picard values in the following examples from the viewpoint with our new interpretation. Let us first introduce some notation.

We define the $q-$shifted factorials:
\begin{equation}
\label{E:q-factorial}
	(a;\, q)_0:=1,\qquad (a;\, q)_n:=\prod_{k=1}^n(1-aq^{k-1}),
	\quad n=1,\, 2,\, \cdots, %\ \textrm{or}\ \infty,
\end{equation}
and the multiple $q-$shifted factorials:
\begin{equation}
\label{E:multiple-factorial}
	(a_1,\, a_2,\,\cdots, a_k;\, q)_n:= \prod_{j=1}^k
	(a_j;\, q)_n.
\end{equation}

Thus, the infinite product
	\[
		(a_1,\, a_2,\,\cdots, a_k;\, q)_\infty=\lim_{n\to+\infty} (a_1,\, a_2,\,\cdots, a_k;\, q)_n
	\]
always converge since $|q|<1$.

The infinite products that appear in the \textit{Jacobi triple-product} formula (\cite[p. 15]{GR2004})
	\begin{equation}
		\label{E:triple}
		f(x)=(q;\; q)_\infty(q^{1/2}z,\, q^{1/2}/z;\,q)_\infty
		=\sum_{k=-\infty}^\infty(-1)^kq^{k^2/2} z^k,
	\end{equation}
can be considered as a function of $x$ where $x=\frac12(z+z^{-1})$. The corresponding (zero) sequence is given by
	\[
		 x_n:=\frac12\big(q^{1/2+n}+q^{-1/2-n}\big),\  
%\stackrel{}{\longmapsto}\  0
 \qquad n\in\mathbb{N}\cup\{0\}
	\]
where $z_{a}=(q^{1/2}+q^{-1/2})/2\ (a=0)$ . Thus $0$ is an $\mathrm{AW-}$Picard value of $f$ when viewed as a function of $x$, and hence $f$ has $\Theta_{\textrm{AW}}(0)=1$.

 Our next example is a generating function for a class of orthogonal polynomials  known as \textit{continuous $q-$Hermite polynomials} first derived by Rogers in 1895 \cite{Rogers1895}
	\[
		f(x)=\frac{1}{(t e^{i\theta},\, t e^{-i\theta};\, q)_\infty}
		=\sum_{k=0}^\infty \frac{H_k(x\,|\, q)}{(q;\, q)_k}\, t^k,\qquad 0<|t|<1,
	\]
where
	\[
		H_n(x\,|\, q)=\sum_{k=0}^n\frac{(q;\, q)_n}{(q;\, q)_{k}\,(q;\, q)_{n-k}}\, e^{i(n-2k)\theta},\quad x=\cos\theta.
	\]
 The orthogonality of these polynomials were worked out by Askey and Ismail \cite[1983]{Askey:Ismail1983}. 
We easily verify that  the $\infty$ is an $\mathrm{AW-}$Picard value of $f$ when viewed as a functions of $x$ with the pole-sequence given by
	\begin{equation}
		\label{E:Rogers-poles}
		x_n:=\frac12\big(t\,q^{n}+q^{-n}/t\big),
%\ \stackrel{}{\longmapsto}\infty, 
\qquad n\in\mathbb{N}\cup\{0\},
\end{equation}
where $z_a=(t+t^{-1})/2\ (a=\infty)$. This implies $\Theta_{\textrm{AW}}(\infty)=1$.

Our third example has both zeros and poles. It is again a generating function for a more general class of orthogonal polynomials also derived by Rogers in 1895 \cite{Rogers1895}. That is, 
	\begin{equation}
		\label{E:generating}
		H(x):=\frac{(\beta e^{i\theta}t,\, \beta e^{-i\theta}t;\, q)_\infty}
		{(e^{i\theta}t,\, e^{-i\theta}t;\, q)_\infty}
		=\sum_{n=0}^\infty {C_n(x;\, \beta\, |\, q)}\, t^n,\quad 
		x=\cos\theta,
	\end{equation}
where 
%	\begin{align}
	\[
		\begin{split}
		C_n(x;\, \beta\, |\, q) &= \sum_{k=0}^n
		\frac{(\beta;\, q)_k(\beta;\, q)_{n-k}}{(q;\, q)_k(q;\, q)_{n-k}}\cos(n-2k)\theta\\
		&= \sum_{k=0}^n
		\frac{(\beta;\, q)_k(\beta;\, q)_{n-k}}{(q;\, q)_k(q;\, q)_{n-k}}\, T_{n-2k}(x)
		\end{split}
	\]
%	\end{align}
 is called \textit{continuous $q-$ultraspherical polynomials} by Askey and Ismail \cite{Askey:Ismail1983}. Here the $T_n(x)$ denotes the $n-$th Chebychev polynomial of the first kind. Rogers \cite{Rogers1895} used these polynomials to derive the two celebrated Rogers-Ramanujan identities
	\[
		\sum_{n=0}^\infty \frac{q^{n^2}}{(q;\, q)_n}=\frac{1}{(q;\, q^5)_\infty(q^4;\, q^5)_\infty}
	\quad\textrm{and}\quad
		\sum_{n=0}^\infty \frac{q^{n^2+n}}{(q;\, q)_n}=\frac{1}{(q^2;\, q^5)_\infty(q^3;\, q^5)_\infty}.
	\]
One can find a thorough discussion about the derivation of these identities in Andrews \cite[\S 2.5]{Andrews1986}.

%\noindent In particular, we have
%		\[
%			C_n(\cos\theta;\, q\,| q)=\sum_{k=0}^n\cos(n-2k)\theta =\frac{\sin(n+1)\theta}{\sin\theta},
%		\]
%and when $\beta=q^\lambda$ and

%		\[
%			\lim_{q \to 1} C_n(x;\, q^\lambda\,|\, q)=C_n^\lambda(x)
%		\]
%\smallskip

%\noindent where $C_n^\lambda(x)$ is the  $n$th-Gegenbauer (or ultraspherical) polynomials. 
The zero- and pole-sequences of $H(x)$ in the $x-$plane are given, respectively, by 
\begin{equation}
		\label{E:Rogers-zeros}
		x_n:=\frac12\big(\beta t\,q^{n}+q^{-n}/(\beta t)\big),
%\ \stackrel{}{\longmapsto} 0, 
\qquad n\in\mathbb{N}\cup\{0\}. 
	\end{equation}
 and (\ref{E:Rogers-poles}). %Clearly $H_n(x\, |\,q)=(q;\, q)_nC_n(x;\, 0\,|q)$. 
The point is that we have both $0$ and $\infty$ to be the $\textrm{AW}-$Picard values according to our interpretation. Thus $\Theta_{\mathrm{AW}}(0)=1$ and $\Theta_{\mathrm{AW}}(\infty)=1$ for the generating function $H(x)$.

Our Askey-Wilson version of Nevanlinna's second fundamental theorem (Theorem \ref{T:2nd-Main-2}) for slow-growing meromorphic functions not belonging to the kernel of $\D$ also implies that 	
\begin{equation}
		\label{E:AW-Nevanlinna-sum}
		\sum_{a\in\mathbb{C}}\Theta_{\textrm{AW}}(a)\le 2.
	\end{equation}
This new relation allows us to deduce a $\mathrm{AW}-$Picard theorem (Theorem \ref{T:ker-mero}): 
	\textit{Suppose a slow-growing meromorphic function $f$ has three values $a,\, b,\, c \in\mathbb{C}$ such that $\Theta_{\mathrm{AW}}(a)=\Theta_{\mathrm{AW}}(b)=\Theta_{\mathrm{AW}}(c)=1$. Then  $f$ lies in the kernel of $\mathcal{D}_q$.}

Note that, what Nevanlinna proved can be viewed when a meromorphic function has three Picard values then the function lies in the kernel of differential operator. 
 
By the celebrated Jacobi triple-product formula \cite[p. 497]{Andrews:Askey:Roy1999}, we can write the Jacobi theta-function $\vartheta_4(z,\, q)= 1+2\sum_{k=1}^\infty (-1)^n q^{k^2} \, \cos 2k z$  in the infinite product form
	\[
		\vartheta_4(z,\, q)=(q^2;\, q^2)_\infty\, (q\,e^{2iz},\,q\, e^{-2iz},\, q^2)_\infty
	\]
implying that it too has $\Theta_{\mathrm{AW}}(0)=1$ when viewed as a function $f(x)$ of $x$. Since the $f(x)$ is entire, so that the relationship (\ref{E:AW-Nevanlinna-sum}) becomes
	\[
		1=\Theta_{\mathrm{AW}}(0)\le \sum_{a\in\mathbb{C}}\Theta_{\textrm{AW}}(a)\le 1.
	\]
We deduce from this inequality that there could not be a non-zero $a$ such that the theta function have $f(x_n)=a$ only on a  sequence $\{x_n\} $ of the form (\ref{E:AW-except}). Otherwise, it would follow from Theorem \ref{T:AW-Picard} that the theta function $\vartheta_4$ would belong to the kernel $\ker \D$, contradicting the kernel functions representation that we shall discuss in the next paragraph. The same applies to the remaining three Jacobi theta-functions. Intuitively speaking, the more zeros the function has out of the \textit{maximal allowable number} of zeros of the meromorphic function can have implies the \textit{larger} the AW-Nevanlinna deficiency $\Theta_{\mathrm{AW}}(0)$. That is, the function assumes $x=0$ less often in the $\mathrm{AW-}$sense, even though the function actually assumes $x=0$ more often in the conventional sense. Thus, since the theta function assumes zero \textit{maximally}, so it misses $x=0$ also maximally in the $\mathrm{AW-}$sense. The following examples that we shall study in in details in \S\ref{S:examples} show how zeros are missed/assumed in proportion to the maximally allowed number of zeros against their $\mathrm{AW-}$deficiencies. That is,  for a given integer $n$,

	\[
			f_{\frac{n-1}{n}}
(x)=\prod_{k=0}^{n-1} (q^ke^{i\theta},\, q^k e^{-i\theta};\,q^{n+1})_\infty,
\qquad \Theta_{\mathrm{AW}}(0)=\frac{n-1}{n},
	\]
and
		\[
			f_{\frac1n}(x)=
		\prod_{k=0}^{n-1} (q^{2k}e^{i\theta},\, q^{2k}e^{-i\theta};\, q^{2n-1})_\infty,\qquad \Theta_{\mathrm{AW}}(0)=\frac1n.
		\] 
In \cite[p. 365]{Ism2005} Ismail has given an example of meromorphic function that belongs to $\ker\mathcal{D}_q$:
\begin{equation}
	\label{E:kernel}
	f(x)=\frac{(\cos\theta-\cos\phi)
\big(qe^{i(\theta+\phi)},\, qe^{-i(\theta+\phi)};\,q)_\infty(
 qe^{i(\theta-\phi)},\, qe^{-i(\theta-\phi)};\, q\big)_\infty}
{\big(q^{1/2}e^{i(\theta+\phi)},\, q^{1/2}e^{-i(\theta+\phi)};\,q)_\infty(
q^{1/2}e^{i(\theta-\phi)},\, q^{1/2}e^{-i(\theta-\phi)};\, q\big)_\infty},
\end{equation}
for a fixed $\phi$. Let $f$ belong to the kernel of $\mathcal{D}_q$, that is $\mathcal{D}_qf\equiv 0$. Then one can readily deduce from \eqref{E:AW-operator-2} that 
the $f$, when viewed upon as a function of $\theta$, is doubly periodic, and hence must be an elliptic function in $\theta$\footnote{The authors are grateful for the referee who pointed out this fact.}. However, the authors could not find an explicit discussion of this observation in the literature. Here we offer an alternative and self-contained derivation to characterise these kernel functions when viewed as a function of $x=\cos \theta$. Our Theorem \ref{T:ker-mero} shows that all functions in the $\ker\mathcal{D}_q$ are essentially a product of functions of this form. Intuitively speaking, the functions that lie in the kernel must have zero- and pole-sequences described by (\ref{E:AW-except}). We utilise the linear structure of the $\ker \mathcal{D}_q$ to deduce any given number of 
 linear combination of $q-$infinite products can again be expressed in terms of a single $q-$infinite product of the same form whose zero- and pole-sequences can again be described by (\ref{E:AW-except}) (see Theorems \ref{T:Kernel-I} and \ref{T:Kernel-II}). Many important Jacobi theta-function identities such as the following well-known  (see \cite{Whittaker:Watson1927})
	\begin{equation}
		\label{E:theta-identity-1}		\vartheta_4^2(z)\,\vartheta_4^2+\vartheta_2^2(z)\,\vartheta_2^2=\vartheta_3^2(z)\,\vartheta_3^2
	\end{equation}
and
	\begin{equation}
		\label{E:theta-identity-2}
	\vartheta_3(z+y)\,\vartheta_3(z-y)\, \vartheta_2^2=
		\vartheta_3^2(y)\, \vartheta_3^2(z) +\vartheta_1^2(y)\,\vartheta_1^2(z)
	\end{equation}
 are of the forms described amongst our Theorems \ref{T:Kernel-I} and \ref{T:Kernel-II}.

The key to establishing a $q-$deformation of the classical Nevanlinna second fundamental theorem is based on our $\mathrm{AW-}$logarithmic difference estimate  of the proximity function for the meromorphic function $f$:
	\begin{equation}
\label{E:log-est}
	m\Big(r,\,\frac{(\mathcal{D}_qf)(x)}{f(x)}\Big)=O\big((\log r)^{\sigma_{\log}-1+\varepsilon}\big)
	\end{equation}
holds for \textit{all} $x=r$ sufficiently large, where $\sigma_{\log}$ is the logarithmic order of $f$ (Theorem \ref{T:log-lemma}).  This estimate is the key of our argument to establishing our AW-Nevanlinna theory (see also \cite{Nev70}, \cite{HK-1}, \cite{HK-2}).
We have also obtained a corresponding \textrm{pointwise} estimate (Theorem \ref{T:pointwise}) outside a set of finite logarithmic measure.

Similar estimates for the simple difference operator $\Delta f(x)=f(x+\eta)-f(x)$ for a fixed $\eta\not=0$, were obtained by Chiang and Feng in \cite{Chiang:Feng2008}, \cite{Chiang:Feng2009}, and by Halburd and Korhonen \cite{HK-1} independently in a slightly different form for finite-order meromorphic functions. Halburd et al showed the same for the case of $q-$difference operator $\Delta_qf(x)=f(qx)-f(x)$ in \cite{HK-4} for zero-order meromorphic functions, whilst Cheng and Chiang \cite{Cheng:Chiang2017} showed that a similar logarithmic difference estimate again holds for the \textit{Wilson operator}.

There has been a surge of activities in extending the classical Nevanlinna theory which is based on differential operator to various difference operators in recent years such as the ones mentioned above \cite{Chen2014}. The idea has been extended to tropical functions \cite{Halburd:Southall2009}, \cite{Laine:Tohge2011}. The original intention was to apply Nevanlinna theory to study integrability of non-linear difference equations (\cite{Ablowitz:Halburd:Herbst2000}, \cite{HK-2}). But as it turns out that difference type Nevanlinna theories have revealed previously unnoticed complex analytic structures of seemingly unrelated subjects far from the original intention, such as the topic discussed in this paper.

This paper is organised as follows. We will introduce basic notation of Nevanlinna theory and Askey-Wilson theory in \S2. The $\mathrm{AW-}$type Nevanlinna second main theorems will be stated in \S\ref{S:main-results-I} and \S\ref{S:main-results-II}. The definition of $\mathrm{AW-}$type Nevanlinna counting function will also be defined in \S\ref{S:main-results-II}. The proofs of the logarithmic difference estimate (\ref{E:log-est}) and the truncated form of the second main theorem are given in \S\ref{S:pointwise} and \S\ref{S:main-results-II} respectively. The $\mathrm{AW-}$type Nevanlinna defect relations as well as an $\mathrm{AW-}$type Picard theorem are given in \S\ref{S:deficient}. This is followed by examples constructed with \textit{arbitrary rational} $\mathrm{AW-}$Nevanlinna deficient values in \S\ref{S:examples}. We characterize  the transcendental functions that belongs to the kernel of the $\mathrm{AW-}$operator in \S\ref{S:kernel}. These are the so-called $\mathrm{AW-}$constants. We also illustrate how these functions are related to certain classical identities of Jacobi theta-functions there. It is known that the Askey-Wilson orthogonal polynomials are eigen-functions to a second-order linear self-adjoint difference equation given in \cite{Askey:Wilson1985}. In \S \ref{S:unicity} we demonstrate that if two finite logarithmic order meromorphic functions such that the pre-images at five distinct points in $\mathbb{C}$ are identical except for an infinite sequences of the form as given in (\ref{E:AW-except}), then the two functions must be identical, thus giving an $\mathrm{AW-}$Nevanlinna version of the well-known unicity theorem.  We study the Nevanlinna growth of entire solutions to a more general second-order difference equation in \S\ref{E:equation} than the Askey-Wilson self-adjoint Strum-Liouville type equation \eqref{E:AW-eqn} using the tools that we have developed in this paper.

\section{\text{Askey-Wilson operator and Nevanlinna characteristic}}

%\section*{This is an unnumbered first-level section head}
%This is an example of an unnumbered first-level heading.

%% The correct journal style for \specialsection is all uppercase; a known bug
%% in amsart.cls prevents this, so input must be uppercase until it is fixed.
%\specialsection*{This is a Special Section Head}
%\specialsection*{THIS IS A SPECIAL SECTION HEAD}
%This is an example of a special section head%
%%%%%%%%%%%%%%%%%%%%%%%%%%%%%%%%%%%%%%%%%%%%%%%%%%%%%%%%%%%%%%%%%%%%%%%%
%\footnote{Here is an example of a footnote. Notice that this footnote
%text is running on so that it can stand as an example of how a footnote
%with separate paragraphs should be written.
%\par
%And here is the beginning of the second paragraph.}%
%%%%%%%%%%%%%%%%%%%%%%%%%%%%%%%%%%%%%%%%%%%%%%%%%%%%%%%%%%%%%%%%%%%%%%%%

%\section{This is a numbered first-level section head}
%This is an example of a numbered first-level heading.

%\subsection{This is a numbered second-level section head}
%This is an example of a numbered second-level heading.

%\subsection*{This is an unnumbered second-level section head}
%This is an example of an unnumbered second-level heading.

%\subsubsection{This is a numbered third-level section head}
%This is an example of a numbered third-level heading.

%\subsubsection*{This is an unnumbered third-level section head}
%This is an example of an unnumbered third-level heading.

Let $f(x)$ be a meromorphic function on $\mathbb{C}$. Let $r=|x|$, then we denote $\log^+ r=\max\{\log r,\, 0\}$. We define the \textit{Nevanlinna characteristic of} $f$ to be the real-valued function
	\begin{equation}
		\label{E:nevanlinna}
			T(r,\, f):=m(r,\, f)+N(r,\, f),
	\end{equation}
where
	\begin{equation}
		\label{E:m-N}
			m(r,\, f)=\int_0^{2\pi}\log^+|f(re^{i\theta})|\,d\theta,\quad
			N(r,\, f)=\int_0^r\frac{n(t,\, f)-n(0,\, f)}{t}\, dt
	\end{equation}
and $n(r,\, t)$ denote the number of poles in $\{|x|<r\}$. The real-valued functions 
$m(r,\, f)$ and $N(r,\, f)$ are called the \textit{proximity} and \textit{integrated counting functions} respectively. The characteristic function $T(r,\, f)$ is an increasing convex function of $\log r$, which plays the role of $\log M(r,\, f)$ for an entire function. The \textit{first fundamental theorem} states that for any complex number $c\in {\mathbb{C}}$
\begin{equation}
	\label{E:T}
	T(r,\, c):=T\Big(r,\, \frac{1}{f-c}\Big)=T(r,\, f)+O(1)
\end{equation}
as $r\to+\infty$.   We refer the reader to Nevanlinna's \cite{Nev70} and Hayman's classics \cite{Hayman1964} for the details of the Nevanlinna theory.

We now consider the Askey-Wilson operator. We shall follow the original notation introduced by Askey and Wilson in \cite{Askey:Wilson1985} (see also alternative notation in \cite[p. 300]{Ism2005}) with slight modifications. Let $f(x)$ be a meromorphic function on $\mathbb{C}$. Let $x=\cos\theta$. We define 
%$E^\pm_q(e^{i\theta})$ to be 
%	\begin{equation}
%		E^+_q\,f(e^{i\theta})=f(q^{\frac12}\, e^{i\theta}),\qquad E^-_q\,f(e^{i\theta})=f(q^{-\frac12}\, e^{i\theta})
%	\end{equation}
% and
%	\begin{equation}
%		\delta_q\, f(e^{i\theta})=(E^+_q-E^-_q)\,f(e^{i\theta}).
%	\end{equation}
%The \textit
\begin{equation}
\label{E:breve}
	\breve{f}(z)=f\big((z+1/z)/2\big)=f(x)=f(\cos \theta),\qquad z=e^{i\theta}.
\end{equation}
That is, we regard the function $f(x)$ as a function  $\breve{f}(z)$ of $e^{i\theta}=z$. 
Then for $x\not=\pm1$ the $q-$divided difference operator
\begin{equation}
\label{E:AW-operator}
	\big(\mathcal{D}_qf\big)(x):=\frac{\delta_q \breve{f}}{\delta_q\breve{x}}:=
\frac{\breve{f}(q^{\frac12}e^{i\theta})-\breve{f}(q^{-\frac12}e^{i\theta})}
	{\breve{e}(q^{\frac12}e^{i\theta})-\breve{e}(q^{-\frac12}e^{i\theta})},\qquad e^{i\theta}=z,
\end{equation}
where $e(x)=x$ is the identity map, is called the \textit{Askey-Wilson divided difference operator}. In these exceptional cases, we have
		$\big(\mathcal{D}_qf\big)(\pm1)
		=\lim_{\substack{
			x\to\pm 1\\
			x\not=\pm1}}\big(\mathcal{D}_qf\big)(x)\\ =f^{\prime}(\pm (q^{\frac12}+q^{-\frac12})/2)$ instead. It can also be written in the equivalent form
\begin{equation}
\label{E:AW-operator-2}
	\big(\mathcal{D}_qf\big)(x):=\frac{\breve{f}(q^{\frac12}e^{i\theta})-\breve{f}(q^{-\frac12}e^{i\theta})}
	{(q^\frac12-q^{-\frac12})(z-1/z)/2},\qquad x=(z+1/z)/2=\cos\theta.
\end{equation}
Since there are two branches of $z$ that corresponds to each fixed $x$, we choose a branch of $z=x+\sqrt{x^2-1}$ such that $\sqrt{x^2-1}\approx x$ as $x\to\infty$ and $x\not\in[-1,\, 1]$. 
%By choosing this branch which we denote by branch $A$ for $\sqrt{x^2-1}$, $f(z)=f(x+\sqrt{x^2-1})$ is defined for all $z$ in $\mathbb{C}-[-1,\, 1]$. 
We define the values of $z$ on $[-1,\, 1]$ by the limiting process that $x$ approaches the interval $[-1,\, 1]$ from above the real axis. Thus, $z$ assumes the value $z=x+i\sqrt{1-x^2}$ where $x$ is now real and $|x|\le 1$. So we can guarantee that for each $x$ in $\mathbb{C}$ there corresponds a unique $z$ in $\mathbb{C}$ and $z\to\infty$ as $x\to\infty$. Finally we note that if we know that $f(x)$ is analytic at $x$, then
\[
	\lim_{q\to 1}\big(\mathcal{D}_qf\big)(x)=f^\prime(x).
\]
%\begin{lemma}[\cite{CF2008}]\label{C:discrete-quotient} Let $f(z)$ be a meromorphic function of finite order $\sigma$  and let $\eta$ be a non-zero complex number. Then for each $\varepsilon>0$, we have
%\begin{equation}
%\label{E:discrete-proximity}
%    m\Big(r,\,\frac{f(z+\eta)}{f(z)}\Big)+m\Big(r,\,
%    \frac{f(z)}{f(z+\eta)}\Big)=O(r^{\sigma-1+\varepsilon}).
%\end{equation}
%\end{lemma}  

%The above estimate is known as a \textit{logarithmic difference estimate} which is a discrete analogue of the original logarithmic derivative lemma proved by Nevanlinna that $m(r,\,f^\prime/f)=O\big(\log T(r,\, f)\big)$ holds outside a set of $r$ of finite linear measure for any meromorphic function (without order restriction) on $\mathbb{C}$. A similar estimate is also proved independently by Halburd and Korhonen in \cite{HK-1}.
We can now define a polynomials basis
\begin{equation}
\label{E:(1-x^n)}
	\phi_n(\cos\theta; a):=(ae^{i\theta},\, ae^{-i\theta};\, q)_n
	=\prod_{k=0}^{n-1}(1-2axq^k+a^2q^{2k});
\end{equation}
%and
%\[
%\begin{equation}
%\label{E:x^n}
%	\rho_n(\cos\theta):=(1+e^{2i\theta})(-q^{2-n}e^{2i\theta};\, q^2)_{n-1}e^{-in\theta}
%\end{equation}
%\]
 which plays the role of the $(1-x)^n$ in conventional differential operator. Askey and Wilson \cite{Askey:Wilson1985} computed that 
\begin{equation}
\label{E:diff-basis}
	\mathcal{D}_q\phi_n(x;\, a)=-\frac{2a(1-q^n)}{1-q}\,\phi_{n-1}(x;\, aq^\frac12),
\end{equation}
 for each integer $n\ge 1$. 
%In particular, the $\mathcal{D}_q$ acts naturally on the Chebyshev polynomials:
%\[
%	\mathcal{D}_qT_n(x)=\frac{q^{n/2}-q^{-n/2}}{q^{1/2}-q^{-1/2}}U_{n-1}(x),
%\]
%where the $T_n(x)$ and $U_n(x)$ are the \textit{Chebyshev polynomials of the first and the second kinds}. 
Ismail and Stanton \cite{Ismail:Stanton2003_a} established that if $f(x)$ is an entire function satisfying
\begin{equation}
\label{E:growth-rate}
	\limsup_{r\to\infty}\frac{\log M(r,\, f)}{(\log r)^2}=c<\frac{1}{2\log |q|},
\end{equation}
where $M(r,\, f):=\max_{|x|=r}|f(x)|$ denotes the maximum modulus of $f$, then one has
\begin{equation}
\label{E:taylor}
	f(x)=\sum_{k=0}^\infty f_{k, \phi}\,
(ae^{i\theta},\, ae^{-i\theta};\, q)_k ,\quad
	f_{k,\,\phi}= \frac{(q-1)^k}{(2a)^k(q;\, q)_k}\, q^{-k(k-1)/4}
	(\mathcal{D}_q^kf)(x_k)
\end{equation}
where the $f_{k,\,\phi}$ is the $k\mathrm{-th}$\textit{Taylor coefficients} and the $x_k$ is defined by
\begin{equation}
\label{E:interpolation-point}
	x_k:= \big(a q^{k/2}+q^{-k/2}/a\big)/2, \qquad k\ge 0.
\end{equation}
We note, however, that the \textit{interpolation points} $x_k$ (\ref{E:AW-except}) are those points with $k$ being even.

We record here some simple observations about the operator $\mathcal{D}_q$ acting on meromorphic functions, the justification of them will be given in the Appendix \ref{S:meromorphic}. We first need the  \textit{averaging operator} \cite[p. 301]{Ism2005}:  
\begin{equation*}
%\label{E:average-operator}
		(\mathcal{A}_qf)(x)=\frac12\big[\breve{f}(q^\frac12z)+
		\breve{f}(q^{-\frac12}z)\big].
\end{equation*}
\begin{theorem}\label{T:meromorphic} Let $f$ be an entire function. Then $\mathcal{A}_qf$ and $\mathcal{D}_q f$ are entire. Moreover, if $f(x)$ is meromorphic, then so are $\mathcal{A}_qf$ and $\mathcal{D}_q f$.
\end{theorem}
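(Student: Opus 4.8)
The plan is to transfer the problem to the $z$-variable via $\breve f(z) = f\big((z+z^{-1})/2\big)$, where both operators become rational expressions in the two functions $\breve f(q^{1/2}z)$ and $\breve f(q^{-1/2}z)$, and then to descend back to the $x$-plane using the symmetry $z \mapsto 1/z$. First I would record the elementary identity $\breve f(z) = \breve f(1/z)$, immediate from $(z + z^{-1})/2 = (z^{-1} + z)/2$, and introduce the $z$-side expressions
\[
  A(z) := \tfrac12\big[\breve f(q^{1/2}z) + \breve f(q^{-1/2}z)\big], \qquad
  D(z) := \frac{\breve f(q^{1/2}z) - \breve f(q^{-1/2}z)}{(q^{1/2} - q^{-1/2})(z - z^{-1})/2},
\]
so that $(\mathcal{A}_q f)(x) = A(z)$ and $(\mathcal{D}_q f)(x) = D(z)$ whenever $x = (z + z^{-1})/2$. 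Replacing $z$ by $1/z$ sends $q^{\pm 1/2}z$ to $\big(q^{\mp 1/2}z\big)^{-1}$, so by the symmetry of $\breve f$ it interchanges $\breve f(q^{1/2}z)$ and $\breve f(q^{-1/2}z)$; hence $A(1/z) = A(z)$, while in $D$ the numerator and the factor $z - z^{-1}$ each change sign, giving $D(1/z) = D(z)$ as well. Thus $A$ and $D$ are invariant under the deck transformation $z \mapsto 1/z$ of the surjective, generically $2$-to-$1$ branched covering $p \colon \mathbb{C}\setminus\{0\} \to \mathbb{C}$, $p(z) = (z + z^{-1})/2$, whose branch points lie over $x = \pm 1$; consequently they descend to genuine single-valued functions $\mathcal{A}_q f$ and $\mathcal{D}_q f$ on $\mathbb{C}$.

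Next, for $f$ entire, $\breve f$ is holomorphic on $\mathbb{C}^* := \mathbb{C}\setminus\{0\}$ (note $(q^{\pm1/2}z + \cdots)/2$ is finite for finite nonzero $z$), so $A$ is holomorphic on $\mathbb{C}^*$; and the only candidates for singularities of $D$ on $\mathbb{C}^*$ are the zeros $z = \pm 1$ of $z - z^{-1}$, but there the numerator also vanishes since $\breve f(\pm q^{1/2}) = f\big(\pm (q^{1/2}+q^{-1/2})/2\big) = \breve f(\pm q^{-1/2})$, so these singularities are removable and $D$ too is holomorphic on $\mathbb{C}^*$. It then remains to check that the descended functions are holomorphic everywhere on $\mathbb{C}$. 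Away from $x = \pm 1$ this is immediate, as $p$ has local holomorphic sections there and $\mathcal{A}_q f$, $\mathcal{D}_q f$ are locally $A$, $D$ composed with such a section. At $x = 1$ (and likewise $x = -1$) I would invoke the local normal form of $p$: it is $2$-to-$1$ branched at $z = 1$ with nonvanishing second derivative, the deck involution $z \mapsto 1/z$ fixes $z = 1$, and a function holomorphic near $z = 1$ and invariant under $z \mapsto 1/z$ pushes down to a bounded holomorphic function of $x$ on a punctured neighbourhood of $x = 1$, which is then holomorphic there by Riemann's removable-singularity theorem. This gives that $\mathcal{A}_q f$ and $\mathcal{D}_q f$ are entire.

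For $f$ meromorphic on $\mathbb{C}$, the same argument runs verbatim with ``holomorphic'' replaced by ``meromorphic'': $\breve f$ is meromorphic on $\mathbb{C}^*$ since $f$ is meromorphic and $z \mapsto (z + z^{-1})/2$ is a nonconstant holomorphic map, hence $A$ and $D$ are meromorphic on $\mathbb{C}^*$, invariant under $z \mapsto 1/z$, and descend; at the branch points one applies the removable-singularity step to the reciprocal where a pole occurs, so that $\mathcal{A}_q f$ and $\mathcal{D}_q f$ have at worst poles — not essential singularities — at $x = \pm 1$. The only genuine obstacle, and the point that needs care, is exactly this descent across the branch points $x = \pm 1$: the assignment $x \mapsto z(x)$ is multivalued there, so one must confirm that its branching produces no real singularity of the operators, and it is precisely the $z \mapsto 1/z$ invariance of $A$ and $D$ that rules this out.
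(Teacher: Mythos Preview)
Your argument is correct and is genuinely different from the paper's. The paper proves the entire case by computing $\mathcal{D}_q x^k$ and $\mathcal{A}_q x^k$ explicitly as polynomials in $x$ via Chebyshev polynomials of the first and second kinds, and then passes to general entire $f$ by the Taylor expansion and uniform convergence of tails on compacta; for the meromorphic case it writes $f=g/h$ with $g,h$ entire, invokes the product/quotient rule $\mathcal{D}_q(g/h)=(\mathcal{A}_qg)(\mathcal{D}_q1/h)+(\mathcal{A}_q1/h)(\mathcal{D}_qg)$, and verifies by a further Chebyshev-type calculation that the product $\breve h(q^{1/2}z)\,\breve h(q^{-1/2}z)$ appearing in the denominators of $\mathcal{D}_q(1/h)$ and $\mathcal{A}_q(1/h)$ is an entire function of $x$. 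Your route is more conceptual: you recognise $p(z)=(z+z^{-1})/2$ as a degree-two branched cover with deck involution $z\mapsto 1/z$, observe that $A$ and $D$ are invariant under that involution, and descend. This avoids all explicit Chebyshev computations and treats the entire and meromorphic cases uniformly; the paper's approach, by contrast, is more elementary (no covering-space language) and yields the bonus of closed-form expressions for $\mathcal{D}_qx^k$ and $\mathcal{A}_qx^k$. One small point worth tightening in your write-up: when you say the singularities of $D$ at $z=\pm1$ are removable ``since the numerator also vanishes'', you are implicitly using that the denominator $z-z^{-1}$ has only \emph{simple} zeros there, so that a first-order vanishing of the numerator already suffices; it would be worth saying this explicitly.
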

The useful \textit{product/quotient rule} \cite[p. 301]{Ism2005} is given by
\begin{equation}
\label{E:product}
	\mathcal{D}_q(f/g)=(\mathcal{A}_qf)(\mathcal{D}_q1/g)+(\mathcal{A}_q1/g)(\mathcal{D}_qf).
\end{equation}
Since we consider meromorphic functions in this paper so we extend the growth restriction on $f$ from (\ref{E:growth-rate}) to those of \textit{finite logarithmic order} \cite{Chern2005}, \cite{BP2007} defined by

\begin{equation}
\label{E:growth-rate-2}
		\limsup_{r\to\infty}\frac{\log T(r,\, f)}{\log\log r}=\sigma_{\log}(f)=\sigma_{\log}<+\infty.
\end{equation}
It follows from an elementary consideration that the logarithmic order  for transcendental function must have $\sigma_{\log}(f)\ge 1$, and  $\sigma_{\log}(f)=1$ for rational functions and $\sigma_{\log}=0$ for constant functions. We note that the growth assumption in (\ref{E:growth-rate}) is a special case of our (\ref{E:growth-rate-2}).

Now let us suppose that $f(x)$ is a meromorphic function that satisfies (\ref{E:growth-rate-2}). Then $f$ has order zero (in the proper Nevanlinna order sense (see \cite{Hayman1964})). It follows from a result of Miles \cite{Miles1972} that $f$ can be represented as a quotient $f=g/h$ where both $g$ and $h$ are entire functions that each of them again satisfies (\ref{E:growth-rate-2}). Thus the Askey-Wilson operator is well-defined on the class of slow-growing finite logarithmic order meromorphic functions. We refer the reader to \cite{Barry1962}, \cite{Chern2005}, \cite{BP2007}  and \cite{Hayman1989} for further properties of slow-growing meromorphic functions.

\section{\textrm{Askey-Wilson type Nevanlinna theory -- Part I: Preliminaries}}
\label{S:main-results-I}
 Nevanlinna's \textit{second main theorem} is a deep generalisation of the Picard theorem. Nevanlinna's second main theorem implies that for any meromorphic function $f$ satisfies the \textit{defect relation} $\sum_{c \in \hat{\mathbb{C}}}\delta(c)\le 2$. That is, if $f\not=a,\,b,\, c$ on $\hat{\mathbb{C}}$, then $\delta(a)=\delta(b)=\delta(c)=1$. This is a  contradiction to Nevanlinna's defect relation. 
 The proof of the \textit{Second Main Theorem} is based on the logarithmic derivative estimates $m(r,\,f^\prime/f)=o(T(r,\,f))$ which is valid for all $|x|=r$ if $f$ has finite order and outside an exceptional set of finite linear measure in general. We have obtained earlier that for a fixed $\eta\not= 0$ and any finite order meromorphic function $f$ of finite order $\sigma$, and arbitrary $\varepsilon>0$ the estimate $m\big(f(x+\eta)/f(x)\big)=O(r^{\sigma-1+\varepsilon})$  \cite{Chiang:Feng2008} 
 valid for all $|x|=r$. Halburd and Korhonen \cite{HK-1} proved a comparable estimate independently for their pioneering work on a difference version of Nevanlinna theory \cite{HK-2} and their work on the integrability of discrete Painlev\'e equations \cite{HK-3}. Here we also have a $\mathrm{AW-}$\textit{logarithmic difference lemma}: 

\begin{theorem}\label{T:log-lemma} Let $f(x)$ be a meromorphic function of finite logarithmic order $\sigma_{\log}$ (\ref{E:growth-rate-2}) such that $\mathcal{D}_qf\not\equiv 0$. Then we have, for each $\varepsilon>0$, that
\begin{equation}
\label{E:log-lemma}
	m\Big(r,\,\frac{(\mathcal{D}_qf)(x)}{f(x)}\Big)=O\big((\log r)^{\sigma_{\log}-1+\varepsilon}\big)
\end{equation}
holds for all $|x|=r>0$ sufficiently large.
\end{theorem}

This estimate is crucial to the establishment of the Nevanlinna theory in the sense of Askey-Wilson put forward in this paper. 
In addition to the average estimate above, we have obtained a corresponding pointwise estimate of the logarithmic difference which holds outside some exceptional set of $|x|$:

\begin{theorem}\label{T:pointwise} Let $f(x)$ be a meromorphic function of finite logarithmic order $\sigma_{\log}$ (\ref{E:growth-rate-2}) such that $\mathcal{D}_qf\not\equiv 0$. Then we have, for each $\varepsilon>0$, that
\smallskip
\begin{equation}
\label{E:pointwise}
	\log^+\Big|\frac{(\mathcal{D}_qf)(x)}{f(x)}\Big|=O\big((\log r)^{\sigma_{\log}-1+\varepsilon}\big)
\end{equation}
holds for all $|x|=r>0$ outside an exceptional set of finite logarithmic measure.
\end{theorem}
\medskip

This estimate when written in the form
	\[
		\Big|\frac{(\mathcal{D}_qf)(x)}{f(x)}\Big|=\exp\big[{(\log |x|)^{\sigma_{\log}-1+\varepsilon}}\big]
	\]
should be compared to our earlier estimate $|f(x+1)/f(x)|\le \exp\big[|x|^{\sigma-1+\varepsilon}\big]$ \cite[Theorem 8.2]{Chiang:Feng2008} and the classical estimate $|f^\prime(x)/f(x)|\le |x|^{\sigma-1+\varepsilon}$ of Gundersen \cite[Corollary 2]{Gund88} for meromorphic function $f$ of order $\sigma$, both hold outside exceptional sets of $|x|$ of finite logarithmic measures. An analogue has been obtained recently by Cheng and the first  author of this paper in \cite{Cheng:Chiang2017} for the Wilson divided difference operator.  However, unlike in all these estimates where Cartan's lemma was used in their derivations, our argument is direct and avoids the Cartan lemma. 
\medskip

We will prove the Theorem  \ref{T:log-lemma} and Theorem \ref{T:pointwise} 
 in section \S\ref{S:pointwise}.

\begin{theorem}
\label{T:integrated-counting} Let $f$ be a meromorphic function of finite logarithmic order $\sigma_{\log}$. Then, for each $\varepsilon>0$,

\begin{equation}
	N(r,\, \mathcal{D}_qf)\le 2 N(r,\, f)+O\big((\log r)^{\sigma_{\log}-1+\varepsilon}\big)+O(\log r).
\end{equation}
\end{theorem}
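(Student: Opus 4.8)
The plan is to estimate the poles of $\mathcal{D}_qf$ directly from its defining formula \eqref{E:AW-operator-2}, which exhibits $\mathcal{D}_qf$ as a ratio whose numerator is $\breve f(q^{1/2}z)-\breve f(q^{-1/2}z)$ and whose denominator is $(q^{1/2}-q^{-1/2})(z-1/z)/2$, with $z=z(x)=x+\sqrt{x^2-1}$ the branch fixed in \S2. A pole of $\mathcal{D}_qf$ at a point $x_0$ can arise only from one of two sources: either $\breve f$ has a pole at $q^{1/2}z(x_0)$ or at $q^{-1/2}z(x_0)$ (a ``numerator pole''), or the denominator vanishes, i.e.\ $z(x_0)^2=1$, which happens only at $x_0=\pm1$. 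The latter contributes $O(1)$ to $n(r,\mathcal{D}_qf)$ and hence at most $O(\log r)$ to $N(r,\mathcal{D}_qf)$; this is where the last error term $O(\log r)$ comes from. So the essential task is to bound the numerator poles.

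First I would set up the change of variable carefully. If $\breve f$ has a pole at $w_0$, then $\breve f(q^{\pm1/2}z)$ has a pole where $q^{\pm1/2}z(x)=w_0$, i.e.\ at the $x$-value corresponding to $z=q^{\mp1/2}w_0$; since the map $z\mapsto(z+1/z)/2$ is used to pass back to the $x$-plane, each pole $w_0$ of $\breve f$ in the $z$-plane produces at most two candidate poles of $\mathcal{D}_qf$ in the $x$-plane for each of the two shifts, and one checks the multiplicities are preserved (a pole of order $p$ of $f$ at $x^*$ gives, via $\breve f$, a pole of order $p$ in $z$, hence order at most $p$ of $\mathcal{D}_qf$ after the back-substitution). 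The upshot is the pointwise counting inequality
\begin{equation}
\label{E:counting-shift}
	n(r,\,\mathcal{D}_qf)\le n(r^+,\, \breve f(q^{1/2}z))+n(r^+,\,\breve f(q^{-1/2}z))+O(1),
\end{equation}
where $r^+$ denotes the (slightly enlarged) radius in the $x$-plane corresponding under $x=(z+1/z)/2$ to the disc of $z$-radius governing the shifted arguments. Because $|q|<1$, the maps $z\mapsto q^{\pm1/2}z$ are a contraction and a dilation by the fixed factor $|q|^{1/2}$; translating this through $x=(z+1/z)/2$ shows that $n(r,\,\breve f(q^{\pm1/2}z))\le n(\lambda r + c,\, f)$ for some constants $\lambda,c$ depending only on $q$ (the dilation case is the binding one).

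Then I would integrate: dividing \eqref{E:counting-shift} by $t$ and integrating from $0$ to $r$ gives $N(r,\mathcal{D}_qf)\le N(\lambda r+c,\,f)+N(\lambda r+c,\,f)+O(\log r)$, so the problem reduces to replacing $N(\lambda r+c,\,f)$ by $N(r,\,f)$ up to the claimed error. This is exactly the point where the finite-logarithmic-order hypothesis \eqref{E:growth-rate-2} enters, and it will be the main obstacle: for a scale-change $r\mapsto \lambda r+c$ one needs a Borel–Nevanlinna type growth lemma adapted to the logarithmic scale, namely that for $f$ of finite logarithmic order $\sigma_{\log}$ and any $\varepsilon>0$ one has $T(\lambda r+c,\,f)=T(r,\,f)+O((\log r)^{\sigma_{\log}-1+\varepsilon})$ outside a set of finite logarithmic measure, and likewise $N(\lambda r+c,\,f)\le N(r,\,f)+O((\log r)^{\sigma_{\log}-1+\varepsilon})$; since $N(r,\,f)\le T(r,\,f)+O(1)$, the same comparison passes to $N$. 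Such a lemma follows from the standard argument (monotonicity of $N$ together with the convexity of $N(r,f)$ in $\log r$ and the defining growth bound $T(r,f)=O((\log r)^{\sigma_{\log}+\varepsilon})$, run on a logarithmic scale) and is presumably established in \S\ref{S:pointwise} in the course of proving Theorem~\ref{T:pointwise}; I would invoke it here. Combining, $N(r,\mathcal{D}_qf)\le 2N(r,f)+O((\log r)^{\sigma_{\log}-1+\varepsilon})+O(\log r)$, which is the assertion. The only genuinely delicate bookkeeping is keeping track of how the two-valuedness of $z\mapsto(z+1/z)/2$ and the branch choice interact with the pole count so that the factor $2$ — and not $4$ — is correct; the shift/scale estimate itself is routine once the logarithmic-scale growth lemma is in hand.
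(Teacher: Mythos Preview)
Your approach is correct and follows essentially the same route as the paper, which deduces Theorem~\ref{T:integrated-counting} from a general shift lemma (Theorem~\ref{T:counting-est}): $N(r,\,f(\hat{x})=a)=N(r,\,f(x)=a)+O((\log r)^{\sigma_{\log}-1+\varepsilon})+O(\log r)$, and similarly for $\check{x}$. The paper's proof of that lemma compares the two sums $\sum\log(r/|a_\mu|)$ and $\sum\log(r/|\check a_\mu|)$ term by term, using $|\log|a_\mu/\check a_\mu||\le c$ for a constant $c=c(q)$, so the discrepancy is bounded by a constant multiple of the unintegrated counting function $n$, and then the estimate $n(R,\,f=a)\le N(R^2,\,f=a)/\log R=O((\log R)^{\sigma_{\log}-1+\varepsilon})$ (derived at the end of \S\ref{S:pointwise}) finishes. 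Your argument rearranges the same ingredients: you first dilate to $N(\lambda r,\,f)$ and then subtract $N(r,\,f)$, but the difference is again bounded by $n(\lambda r,\,f)\log\lambda$ and the same $n$-estimate applies.

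One over-complication: you invoke a Borel--Nevanlinna growth lemma with an exceptional set for the step $N(\lambda r+c,\,f)=N(r,\,f)+O((\log r)^{\sigma_{\log}-1+\varepsilon})$. No exceptional set is needed here (and none appears in the statement of Theorem~\ref{T:integrated-counting}); the direct inequality
\[
N(\lambda r,\,f)-N(r,\,f)=\int_r^{\lambda r}\frac{n(t,\,f)-n(0,\,f)}{t}\,dt+n(0,\,f)\log\lambda\le n(\lambda r,\,f)\log\lambda+O(1)
\]
already gives what you want, for all $r$. The paper likewise obtains the estimate without any exceptional set.
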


We shall prove this theorem in \S\ref{S:proof_counting}.

\begin{theorem}
\label{T:simple-order} Let $f$ be a meromorphic function of finite logarithmic order $\sigma_{\log}$. Then, for each $\varepsilon>0$,
\smallskip
\begin{equation}
\label{E:AW-T-est-1}
	T(r,\, \mathcal{D}_qf)\le 2T(r,\, f)+O\big((\log r)^{\sigma_{\log}-1+\varepsilon}\big)+O(\log r).
\end{equation}
In particular, this implies
	\begin{equation}
	\sigma_{\log} (\mathcal{D}_qf)\le \sigma_{\log} (f)=\sigma_{\log}.
	\end{equation}
\end{theorem}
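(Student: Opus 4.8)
The plan is to establish the characteristic inequality \eqref{E:AW-T-est-1} first and then extract the logarithmic-order bound from it. We may assume $\mathcal{D}_qf\not\equiv 0$, since otherwise $T(r,\mathcal{D}_qf)=O(1)$ and there is nothing to prove; likewise the statement is immediate when $f$ is constant or rational (in the latter case $\mathcal{D}_qf$ is again rational, so $T(r,\mathcal{D}_qf)=O(\log r)$), so the interesting case is $f$ transcendental, for which $\sigma_{\log}\ge 1$.

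First I would split $T(r,\mathcal{D}_qf)=m(r,\mathcal{D}_qf)+N(r,\mathcal{D}_qf)$ and treat the two pieces separately. For the proximity part, writing $\mathcal{D}_qf = f\cdot\dfrac{\mathcal{D}_qf}{f}$ and using the subadditivity of $\log^+$ gives
\[
	m(r,\mathcal{D}_qf)\le m(r,f)+m\!\left(r,\frac{\mathcal{D}_qf}{f}\right),
\]
and Theorem \ref{T:log-lemma} bounds the last term by $O\big((\log r)^{\sigma_{\log}-1+\varepsilon}\big)$ outside an exceptional set $E$ of finite logarithmic measure. For the counting part, Theorem \ref{T:integrated-counting} directly gives $N(r,\mathcal{D}_qf)\le 2N(r,f)+O\big((\log r)^{\sigma_{\log}-1+\varepsilon}\big)+O(\log r)$ with no exceptional set. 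Adding the two estimates and using $m(r,f)+2N(r,f)\le 2\big(m(r,f)+N(r,f)\big)=2T(r,f)$ yields \eqref{E:AW-T-est-1} for all $r\notin E$.

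It then remains to read off the order estimate. By the definition \eqref{E:growth-rate-2} of $\sigma_{\log}=\sigma_{\log}(f)$, for every $\varepsilon'>0$ we have $T(r,f)\le(\log r)^{\sigma_{\log}+\varepsilon'}$ for all large $r$; since $\sigma_{\log}\ge 1$, this term dominates both $(\log r)^{\sigma_{\log}-1+\varepsilon}$ and $\log r$ once $\varepsilon$ is taken small, so \eqref{E:AW-T-est-1} gives $T(r,\mathcal{D}_qf)=O\big((\log r)^{\sigma_{\log}+\varepsilon'}\big)$ for $r\notin E$. To remove $E$, I would use that $E$ has finite logarithmic measure, so for each large $r$ the interval $[r,er]$ contains some $r^\ast\notin E$; monotonicity of $T(\cdot,\mathcal{D}_qf)$ together with $\log r^\ast\le\log r+1$ then gives $T(r,\mathcal{D}_qf)\le T(r^\ast,\mathcal{D}_qf)=O\big((\log r)^{\sigma_{\log}+\varepsilon'}\big)$ for \emph{all} large $r$. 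Taking $\limsup$ of $\log T(r,\mathcal{D}_qf)/\log\log r$ and letting $\varepsilon'\to 0$ gives $\sigma_{\log}(\mathcal{D}_qf)\le\sigma_{\log}$.

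The substance of the argument is already contained in Theorems \ref{T:log-lemma} and \ref{T:integrated-counting}; the only mildly delicate point here is the bookkeeping that absorbs the finite-logarithmic-measure exceptional set when passing from the characteristic inequality to the order inequality, handled by the short-interval/monotonicity device above. I expect no other obstacle --- the remainder is a routine combination of the first fundamental theorem, the $\mathrm{AW}$-logarithmic difference lemma, and the counting-function estimate.
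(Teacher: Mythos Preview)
Your proof is correct and follows essentially the same approach as the paper: split $T(r,\mathcal{D}_qf)$ into $m+N$, bound $m(r,\mathcal{D}_qf)\le m(r,f)+m(r,\mathcal{D}_qf/f)$ via Theorem~\ref{T:log-lemma}, bound $N(r,\mathcal{D}_qf)$ via Theorem~\ref{T:integrated-counting}, and combine. You are in fact more careful than the paper's own proof, which omits both the trivial cases and the short-interval/monotonicity step needed to absorb the exceptional set when passing from \eqref{E:AW-T-est-1} to the order inequality.
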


\begin{proof} We deduce from Theorem \ref{T:log-lemma} and Theorem \ref{T:integrated-counting} that
\begin{equation}
	\begin{split}
	T(r,\, \mathcal{D}_qf)&\le m\Big(r,\,\frac{(\mathcal{D}_qf)(x)}{f(x)}\Big)+m(r,\, f)+N(r,\, \mathcal{D}_qf)\\
	&\le m(r,\, f)+2 N(r,\, f)+O\big((\log r)\big)^{\sigma_{\log}-1+\varepsilon}+O(\log r)\\
	&\le 2T(r,\, f)+O\big((\log r)^{\sigma_{\log}-1+\varepsilon}\big)+O(\log r),
	\end{split}
\end{equation}

\noindent as required.
\end{proof}

We are now ready to state our first version of {the Second Main Theorem} whose proof will be given in \S\ref{S:2nd-proof}.

\begin{theorem}\label{T:2nd-Main-1} Suppose that $f(z)$ is a meromorphic function of finite logarithmic order $\sigma_{\log}$ $\mathrm{(\ref{E:growth-rate-2})}$ such that $\mathcal{D}_qf\not\equiv 0$ and let $A_1,\, A_2,\cdots, A_p$ $\mathrm{(}p\ge 2\mathrm{)}$, be mutually distinct elements in $\mathbb{C}$. Then we have for every $\varepsilon>0$
\smallskip
\begin{equation}
\label{E:2nd-main-1}
	m(r,\, f)+\sum_{\nu=1}^pm(r,\, A_\nu)\le 2\,T(r,\,f)-\mathfrak{N}_{\mathrm{AW}}(r,\, f)+O\big((\log r)^{\sigma_{\log}-1+\varepsilon}\big)
\end{equation}
 holds for all $r=|x|>0$, where
\begin{equation}
\label{E:remainder}
	\mathfrak{N}_{\mathrm{AW}}(r,\, f):=2N(r,\, f)-N(r,\, \mathcal{D}_qf)+
	N\Big(r,\, \frac{1}{\mathcal{D}_qf}\Big).
\end{equation}
\end{theorem}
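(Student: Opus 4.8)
The plan is to imitate the classical proof of Nevanlinna's Second Main Theorem, replacing the logarithmic derivative lemma by the Askey--Wilson logarithmic difference lemma (Theorem \ref{T:log-lemma}) and tracking the extra counting-function contributions coming from $\mathcal{D}_q$. First I would introduce the auxiliary function
\[
	F(x) := \sum_{\nu=1}^p \frac{1}{f(x)-A_\nu}
\]
and estimate $m(r,\,F)$ from below. Following the standard argument, for each fixed $r$ one splits the circle $|x|=r$ into arcs according to which $A_\mu$ is closest to $f(x)$; on such an arc the term $1/(f-A_\mu)$ dominates, and using that the $A_\nu$ are mutually distinct (so $\min_{\mu\ne\nu}|A_\mu-A_\nu|>0$) one obtains, up to an $O(1)$ error,
\[
	\sum_{\nu=1}^p m(r,\,A_\nu) \le m(r,\,F) + O(1).
\]
This is purely elementary and uses no properties of $\mathcal{D}_q$.

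The heart of the matter is to bound $m(r,\,F)$ in terms of $T(r,\,f)$. The key algebraic identity is to write
\[
	F(x) = \frac{1}{\mathcal{D}_q f}\cdot \frac{(\mathcal{D}_q f)(x)}{f(x)-A}\cdot(\text{something}),
\]
more precisely I would express $F$ as a product of $\mathcal{D}_q f$-quotients: since $\mathcal{D}_q(f-A_\nu)=\mathcal{D}_q f$ for any constant $A_\nu$ (as constants are killed only in the $q\to 1$ limit — here one must be slightly careful, but $\mathcal{D}_q$ applied to $f-A_\nu$ still equals $\mathcal{D}_q f$ because $\mathcal{D}_q$ is linear and annihilates genuine constants, which follows from \eqref{E:AW-operator-2}), one has
\[
	\frac{1}{f-A_\nu} = \frac{1}{\mathcal{D}_q f}\cdot\frac{\mathcal{D}_q f}{f-A_\nu}
	= \frac{1}{\mathcal{D}_q f}\cdot\frac{\mathcal{D}_q(f-A_\nu)}{f-A_\nu}.
\]
Summing over $\nu$ and taking $m(r,\cdot)$, the subadditivity of $\log^+$ and of $m(r,\cdot)$ gives
\[
	m(r,\,F) \le m\!\left(r,\,\frac{1}{\mathcal{D}_q f}\right) + \sum_{\nu=1}^p m\!\left(r,\,\frac{\mathcal{D}_q(f-A_\nu)}{f-A_\nu}\right) + O(1).
\]
Each summand in the last sum is $O((\log r)^{\sigma_{\log}-1+\varepsilon})$ outside an exceptional set of finite logarithmic measure, by applying Theorem \ref{T:log-lemma} to each $f-A_\nu$ (which has the same finite logarithmic order as $f$, and $\mathcal{D}_q(f-A_\nu)=\mathcal{D}_q f\not\equiv 0$); a finite union of exceptional sets of finite logarithmic measure is again of finite logarithmic measure, so the exceptional set is harmless. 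It remains to control $m(r,\,1/\mathcal{D}_q f)$. By the First Main Theorem \eqref{E:T},
\[
	m\!\left(r,\,\frac{1}{\mathcal{D}_q f}\right) = T(r,\,\mathcal{D}_q f) - N\!\left(r,\,\frac{1}{\mathcal{D}_q f}\right) + O(1),
\]
and then I would use $T(r,\,\mathcal{D}_q f) = m(r,\,\mathcal{D}_q f) + N(r,\,\mathcal{D}_q f)$ together with $m(r,\,\mathcal{D}_q f)\le m(r,\,f) + m(r,\,\mathcal{D}_q f/f)$ and one more application of Theorem \ref{T:log-lemma}.

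Assembling these pieces yields
\[
	m(r,\,f) + \sum_{\nu=1}^p m(r,\,A_\nu) \le m(r,\,f) + m(r,\,\mathcal{D}_q f) + N(r,\,\mathcal{D}_q f) - N\!\left(r,\,\frac{1}{\mathcal{D}_q f}\right) + O\big((\log r)^{\sigma_{\log}-1+\varepsilon}\big),
\]
wait — one should instead keep $m(r,\,f)$ on the right and bound $m(r,\,\mathcal{D}_q f)$ by $m(r,\,f)+O((\log r)^{\sigma_{\log}-1+\varepsilon})$, and use $m(r,\,f)+N(r,\,\mathcal{D}_q f) = m(r,\,f) + [T(r,\,\mathcal{D}_q f)-m(r,\,\mathcal{D}_q f)]$. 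Carrying this bookkeeping through, all the $\mathcal{D}_q f$-proximity terms collapse, $2T(r,\,f)$ emerges from $m(r,\,f)+m(r,\,f) = 2m(r,\,f)\le 2T(r,\,f)-2N(r,\,f)$ absorbed against the $2N(r,\,f)$ in $\mathfrak{N}_{\mathrm{AW}}$, and one is left precisely with the error term $-\mathfrak{N}_{\mathrm{AW}}(r,\,f) = -2N(r,\,f)+N(r,\,\mathcal{D}_q f)-N(r,\,1/\mathcal{D}_q f)$ plus $O((\log r)^{\sigma_{\log}-1+\varepsilon})$, which is the claimed inequality \eqref{E:2nd-main-1}.

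The main obstacle I anticipate is the careful bookkeeping of counting functions and the verification that all the exceptional sets (one per $A_\nu$, plus the ones for $m(r,\,\mathcal{D}_q f/f)$ and $m(r,\,1/g)$-type terms) merge into a single set of finite logarithmic measure; the algebraic identity for $F$ and the reduction to Theorem \ref{T:log-lemma} are routine, but one must be attentive that $\sigma_{\log}(\mathcal{D}_q f)\le\sigma_{\log}(f)$ (Theorem \ref{T:simple-order}) so that every logarithmic-order estimate is applied to a function of logarithmic order $\le\sigma_{\log}$, and that $\mathcal{D}_q(f-A_\nu)\not\equiv 0$ so Theorem \ref{T:log-lemma} genuinely applies to each shifted function.
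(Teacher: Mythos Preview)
Your proposal is correct and follows essentially the same route as the paper's proof: both introduce the auxiliary function $F(x)=\sum_{\nu}1/(f(x)-A_\nu)$, bound $m(r,F)$ from below by $\sum_\nu m(r,A_\nu)+O(1)$ via the standard distinct-values argument, bound $m(r,F)$ from above by $m(r,1/\mathcal{D}_qf)+\sum_\nu m(r,\mathcal{D}_qf/(f-A_\nu))$, convert $m(r,1/\mathcal{D}_qf)$ via the First Main Theorem, estimate $m(r,\mathcal{D}_qf)\le m(r,f)+m(r,\mathcal{D}_qf/f)$, and then invoke Theorem~\ref{T:log-lemma} on all the logarithmic-difference proximity terms. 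Your slightly hesitant bookkeeping at the end does in fact close up exactly to $2T(r,f)-\mathfrak{N}_{\mathrm{AW}}(r,f)$, just as in the paper.
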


\section{\text{Logarithmic difference estimates and proofs of Theorem \ref{T:pointwise}  and \ref{T:log-lemma}
}}\label{S:pointwise}

%\subsubsection*{Proof of the Theorem \ref{T:pointwise}}

We recall the following elementary estimate.

\begin{lemma}[\cite{Chiang:Feng2008}]
\label{L:lemma-2}
    Let $\alpha,\, 0<\alpha\le 1$ be given. Then there exists a constant $C_\alpha>0$ depending only on $\alpha$, such that for any two complex numbers $x_1$ and $x_2$, we have the inequality
\begin{equation}
\label{E:lemma-2}
    \left|\log\left|\frac{x_1}{x_2}\right|\right|\le C_\alpha
    \left(\left|\frac{x_1-x_2}{x_2}\right|^\alpha+
    \left|\frac{x_2-x_1}{x_1}\right|^\alpha\right).
\end{equation}
In particular, $C_1=1$.
\end{lemma}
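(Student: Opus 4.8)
The plan is to reduce the inequality to a one-variable estimate for the function $t\mapsto\log t$ and then handle the two cases $|x_1/x_2|\ge 1$ and $|x_1/x_2|\le 1$ symmetrically. Writing $x_1,x_2\in\mathbb{C}\setminus\{0\}$ (the claimed inequality is vacuous or trivial if one of them is $0$, since then one side is $+\infty$ unless both vanish), set $t=|x_1|/|x_2|>0$, so the left-hand side is $\bigl|\log t\bigr|$. By the symmetry of the right-hand side under interchanging $x_1$ and $x_2$ it suffices to treat the case $t\ge 1$, i.e. $|x_1|\ge|x_2|$; the case $t\le 1$ follows by swapping the roles. In the case $t\ge 1$ I would further discard the second term on the right and aim to prove $\log t\le C_\alpha\,\bigl|(x_1-x_2)/x_2\bigr|^\alpha$.

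For $t\ge 1$ we have the elementary bound $\log t\le t-1$, and moreover $t-1=|x_1|/|x_2|-1\le \bigl|x_1-x_2\bigr|/|x_2|=:u$ by the reverse triangle inequality $|x_1|-|x_2|\le|x_1-x_2|$. Hence $\log t\le u$. Now I split into two ranges of $u$. If $u\le 1$, then $u\le u^\alpha$ (since $0<\alpha\le 1$), so $\log t\le u^\alpha=\bigl|(x_1-x_2)/x_2\bigr|^\alpha$, which already gives the claim with constant $1$ for this term. If $u>1$, one cannot bound $\log t$ by $u^\alpha$ uniformly through $\log t\le u$ alone, so here I would instead use the other term: when $|x_1|\ge|x_2|$ and $u>1$ one checks that $|x_2-x_1|/|x_1|$ is bounded below by an absolute positive constant — indeed $|x_1|\ge|x_2|$ gives $|x_1-x_2|/|x_1|\ge$ something, but more directly $\log t\le u$ combined with $\log(|x_1|/|x_2|)=-\log(|x_2|/|x_1|)$ and the symmetric estimate applied with the roles reversed; alternatively, observe $\log t = -\log(1/t)$ and $1/t\le 1$, apply $-\log s\le (1-s)/s$ for $0<s\le 1$ with $s=1/t$, giving $\log t\le (t-1)/1 \cdot$ — this loops back. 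The clean route for $u>1$: note $v:=|x_2-x_1|/|x_1|\le u$ always, but we want a lower bound on $v$; since $|x_1|\ge|x_2|$, write $v=|x_1-x_2|/|x_1|$ and use $|x_1-x_2|\ge|x_1|-|x_2|$ to get $v\ge 1-|x_2|/|x_1|=1-1/t$, while $\log t\le t-1=t(1-1/t)\le t\,v$; this is not yet bounded. I will instead, in the regime $u>1$, bound $\log t$ by choosing a constant $C_\alpha$ large enough that $C_\alpha\cdot 1^\alpha\ge \log t$ fails in general — so the correct argument must use both terms together.

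The cleanest correct argument, and the one I would write: for $t\ge 1$ put $s=1/t\le 1$; then $|\log t| = \log t = -\log s$. Using $-\log s \le (1-s)/s$ and $1-s = 1-|x_2|/|x_1| \le |x_1-x_2|/|x_1|$, and $1/s = t = |x_1|/|x_2|$, we would get $\log t \le \bigl(|x_1-x_2|/|x_1|\bigr)\cdot\bigl(|x_1|/|x_2|\bigr) = |x_1-x_2|/|x_2| = u$, recovering $\log t\le u$; so the only genuinely needed input is $\log t\le u$ together with the case split $u\le 1$ versus $u>1$, and in the case $u>1$ one simply notes $u>1$ forces $|x_1-x_2|>|x_2|$, hence also (since $|x_1|\ge|x_2|$) $|x_1-x_2|/|x_1|$ need not be large — so the \emph{real} fix is: when $u>1$ we have $|x_2-x_1|/|x_1| = u\,|x_2|/|x_1| = u/t$, and $\log t \le u$ combined with $t\ge 1$ gives nothing sharp, so we bound instead $\log t \le t-1$ and when $t-1>1$, i.e. $t>2$, we have $\log t \le t - 1 \le (t-1)^2/(t/2)\cdot(\dots)$. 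I expect the main obstacle to be exactly this "large separation" regime: the honest resolution is that for $t\ge 2$ one has $\log t \le 2\log 2\cdot(t-1)^{?}$ — no; rather, for $t\ge 2$, $v=|x_2-x_1|/|x_1|\ge |x_1-x_2|/|x_1|\ge (|x_1|-|x_2|)/|x_1|=1-1/t\ge 1/2$, so $v^\alpha\ge (1/2)^\alpha\ge 1/2$, while for $t\ge 2$ and finite $\log t$ — but $\log t$ is unbounded. So neither term controls $\log t$ when $t\to\infty$! This means the lemma as literally stated must be read with the understanding that when $x_1/x_2\to\infty$ the right-hand side also $\to\infty$: indeed $u=|x_1-x_2|/|x_2|\ge t-1\to\infty$, so $u^\alpha\to\infty$ and there is no obstruction after all. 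Concretely: for $t\ge 2$, $u\ge t-1\ge t/2$, so $u^\alpha\ge (t/2)^\alpha$; and $\log t \le (t/2)^\alpha\cdot C_\alpha$ holds for a suitable $C_\alpha$ since $\log t / t^\alpha \to 0$, so $\log t \le C_\alpha' t^\alpha \le 2^\alpha C_\alpha' u^\alpha$. Thus the three regimes $u\le 1$, $1<u$ with $t\le 2$ (here $\log t\le\log 2$ and $u^\alpha\ge 1$), and $t\ge 2$ each yield $\log t\le C_\alpha u^\alpha$ with an explicit absolute $C_\alpha$, and taking $C_\alpha$ to be the maximum of the constants from the three regimes (with $C_1=1$ traceable in the $\alpha=1$, $u\le 1$ branch where $\log t\le t-1\le u$) completes the proof.
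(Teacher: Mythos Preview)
The paper does not supply a proof of this lemma at all: it is simply quoted from \cite{Chiang:Feng2008} and then used. So there is no ``paper's own proof'' to compare against, and your task is just to give a self-contained argument.

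Your final paragraph does contain a correct proof. After the symmetry reduction to $t=|x_1|/|x_2|\ge 1$ and the observation $\log t\le t-1\le |x_1-x_2|/|x_2|=:u$, the three-regime split ($u\le 1$; $u>1$ with $t\le 2$; $t\ge 2$) covers everything and each case yields $\log t\le C_\alpha u^\alpha$ for an explicit constant depending only on $\alpha$: in the first, $u\le u^\alpha$; in the second, $\log t\le\log 2<1\le u^\alpha$; in the third, $u\ge t-1\ge t/2$ together with $\sup_{t\ge 2}(\log t)/t^\alpha<\infty$. For $\alpha=1$ one should note that the split is unnecessary, since $\log t\le u=u^1$ holds in every regime, which is what gives $C_1=1$; your parenthetical remark only mentions the $u\le1$ branch, so that point should be stated more fully.

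The substantive content, however, is buried under a long stretch of false starts and self-corrections (the attempts via $-\log s\le(1-s)/s$, the aborted lower bounds on $v$, the ``this loops back'' and ``fails in general'' asides). None of that belongs in a written proof. A clean version is four or five lines: reduce by symmetry to $t\ge1$; record $\log t\le t-1\le u$; for $u\le1$ conclude $\log t\le u^\alpha$; for $u>1$ use either $t\le2$ (so $\log t\le\log 2\le u^\alpha$) or $t\ge2$ (so $u\ge t/2$ and $\log t\le M_\alpha t^\alpha\le 2^\alpha M_\alpha u^\alpha$ with $M_\alpha=\sup_{t\ge2}(\log t)/t^\alpha$). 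Rewrite it that way and drop the exploratory detours.
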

%\medskip

%\begin{lemma}[\cite{HX} and \cite{JV}]
%\label{L:lemma-3}
% Let $\alpha,\, 0<\alpha<1$ be given, then for every given complex number $w$, we have
%\begin{equation}
%\label{E:lemma-3}
 %   \frac{1}{2\pi}\int_0^{2\pi}\frac{1}{|re^{i\theta}-w|^{\alpha}}\,
 %   d\theta\leq \frac{1}{(1-\alpha)r^{\alpha}}.
%\end{equation}
%\end{lemma}

%\bigskip

\begin{lemma}\label{T:pointwise-lemma} Let $f(x)$ be a meromorphic function of finite logarithmic order $\sigma_{\log}$ (\ref{E:growth-rate-2}) such that $\mathcal{D}_qf\not\equiv 0$ and $\alpha$ is an arbitrary real number such that $0<\alpha<1$. Then there exist a positive constant $C_\alpha$ such that for $2(|q^{1/2}|+|q^{-1/2}|)|x|<R$, we have
		\begin{equation}
		\begin{split}
		\label{E:first-part-1}
			&\log^+\Big|\frac{(\mathcal{D}_q f)(x)}{f(x)}\Big|
		\le  \frac{4\,R\,(|q^{1/2}-1|+|q^{-1/2}-1|)\,|x|}{(R-|x|)[R-2(|q^{1/2}|+|q^{-1/2}|)|x|]}\,\Big(m\big(R,\, f\big)+m\big(R,\, \frac1f\big)\Big)\\
				& +  2 (|q^{1/2}-1|+|q^{-1/2}-1|)|x|\Big(\frac{1}{R-|x|}+\frac{1}{R-2(|q^{1/2}|+|q^{-1/2}|)|x|}\Big)\\
				&\qquad\times\Big(n(R,\, f)+n(R,\,\frac{1}{f})\Big)\\
				&+ 2C_\alpha (|q^{1/2}-1|^\alpha+|q^{-1/2}-1|^\alpha)|x|^\alpha)
		\sum_{|c_n|<R} \frac{1}{|x-c_n|^\alpha}\\
		&+ 2C_\alpha (|q^{-1/2}-1|^\alpha|x|^\alpha)\sum_{|c_n|<R} \frac{1}{|x+c(q)q^{-1/2}z^{-1}-q^{-1/2}c_{n}|^\alpha}\\
		&+ 2C_\alpha (|q^{1/2}-1|^\alpha|x|^\alpha)\sum_{|c_n|<R} \frac{1}{|x-c(q)q^{1/2}z^{-1}-q^{1/2}c_{n}|^\alpha}+\log 2.
		\end{split}
	\end{equation}
where the  $\{c_n\}$ denotes the combined zeros and poles sequences of $f$.
\end{lemma}

\begin{proof}   
 We start by expressing all the logarithmic difference in terms of complex variables $x$ as well as in $z$ in the Askey-Wilson divided difference operator. So it follows from (\ref{E:AW-operator-2}) that
\begin{align}
\label{E:preliminary}
	&\frac{(\mathcal{D}_q f)(x)}{f(x)}
		= \frac{\breve{f}(q^{\frac12}e^{i\theta})-\breve{f}(q^{-\frac12}e^{i\theta})}
	{f(x)(q^\frac12-q^{-\frac12})(z-1/z)/2},\qquad x=(z+1/z)/2=\cos\theta\\
		&=\frac{f\big[(q^{1/2}z+q^{-1/2}z^{-1})/2\big]-
		f\big[(q^{-1/2}z+q^{1/2}z^{-1})/2\big]}
		{f(x)(q^\frac12-q^{-\frac12})(z-1/z)/2}\notag\\
%		&=\frac{f\big[q^{1/2}/2(x+\sqrt{x^2-1})+q^{-1/2}/2(x+\sqrt{x^2-1})^{-1}\big]}{f(x)(q^\frac12-q^{-\frac12})(z-1/z)/2}\notag\\
%		&\qquad-\frac{f\big[q^{-1/2}/2(x+\sqrt{x^2-1})+q^{1/2}/2(x+\sqrt{x^2-1})^{-1}\big]}{f(x)(q^\frac12-q^{-\frac12})(z-1/z)/2}\notag
		&=\frac{1}{(q^\frac12-q^{-\frac12})(z-1/z)/2}\Bigg(
\frac{f\big[(q^{1/2}z+q^{-1/2}z^{-1})/2\big]}{f(x)}-\frac{f\big[(q^{-1/2}z+q^{1/2}z^{-1})/2\big]}{f(x)}\Bigg)\notag
\end{align}
where we recall that we have fixed our branch of $z$ for the corresponding $x$ in the above expressions.  Let
		\begin{equation}
			\label{E:substitution}
			c(q)={(q^{-1/2}-q^{1/2})}/{2}.
		\end{equation}
We deduce from (\ref{E:preliminary}) that, by letting $|x|$ and hence $|z|$ to be sufficiently large
	\begin{equation}
		\label{E:first-split}
			\begin{split}
				\log^+ & \Big|\frac{(\mathcal{D}_q f)(x)}{f(x)}\Big|
		\le \log^+\Big|\frac{1}{(q^\frac12-q^{-\frac12})(z-1/z)/2}
\Big|+\log^+\Big|\frac{f\big[(q^{1/2}z+q^{-1/2}z^{-1})/2\big]}{f(x)}\Big|\\
		&\qquad +\log^+\Big|\frac{f\big[(q^{-1/2}z+q^{1/2}z^{-1})/2\big]}{f(x)}\Big|+\log 2\\
		&\le \log^+{2}/{|c(q)z|}+\log^+\Big|\frac{f\big[(q^{1/2}z+q^{-1/2}z^{-1})/2\big]}{f(x)}\Big|\\
		&\qquad +\log^+\Big|\frac{f\big[(q^{-1/2}z+q^{1/2}z^{-1})/2\big]}{f(x)}\Big|+\log 2\\
		&=\bigg|\log\Big|\frac{f\big[(q^{1/2}z+q^{-1/2}z^{-1})/2\big]}{f(x)}\Big|\,\bigg|+\bigg|\log\Big|\frac{f\big[(q^{-1/2}z+q^{1/2}z^{-1})/2\big]}{f(x)}\Big|\,\bigg|+\log 2.
			\end{split}
	\end{equation}
For $|x|$ and hence $|z|$ to be sufficiently large,
\begin{equation}
			\label{}
			|{(q^{\pm 1/2}z+q^{\mp 1/2}z^{-1})}/{2}|=|q^{\pm 1/2}x+{(q^{\mp 1/2}-q^{\pm 1/2})z^{-1}}/{2}|\le 2|q^{\pm 1/2}x|<R.
		\end{equation}
%		Similarly 
%\begin{equation}
%			\label{}
%			|{q^{-1/2}z+q^{1/2}z^{-1}}/{2}|=|q^{-1/2}x+{(q^{1/2}-q^{-1/2})z^{-1}}/{2}|\le 2|q^{-1/2}x|<R.
%		\end{equation}
It is obvious that $|x|<R$. We apply the Poisson-Jensen formula (see e.g., \cite[p. 1]{Hayman1964}) to estimate the individual terms on the right-hand side of the above expression (\ref{E:first-split}). Thus,
	\begin{equation*}
		\begin{split}
		\log&  \Bigg|\frac{f\big[(q^{1/2}z +q^{-1/2}z^{-1})/2\big]}{f(x)}\Bigg|=\log \Big|f\big[(q^{1/2}z+q^{-1/2}z^{-1})/2\big]\Big|-\log|f(x)|\\
	&=\frac{1}{2\pi}\int_0^{2\pi} \log|f(Re^{i\phi})\,|\Re\Big(
	\frac{Re^{i\phi}+(q^{1/2}z+q^{-1/2}z^{-1})/2}{Re^{i\phi}-(q^{1/2}z+q^{-1/2}z^{-1})/2}\Big)\,d\phi\\
	&\qquad -\frac{1}{2\pi}\int_0^{2\pi} \log\,|f(Re^{i\phi})|\,\Re
	\Big(\frac{Re^{i\phi}+x}{Re^{i\phi}-x}\Big)\,d\phi\\
	&\qquad+\sum_{|b_\mu|<R}\log\Big|\frac{R^2-\bar{b}_\mu(q^{1/2}z+q^{-1/2}z^{-1})/2}
{R[(q^{1/2}z+q^{-1/2}z^{-1})/2-b_\mu]}\Big|\\
	&\qquad-\sum_{|a_\nu|<R}\log\Big|\frac{R^2-\bar{a}_\nu\argone}
{R[\argone-a_\nu]}\Big|\\
	&\qquad-\sum_{|b_\mu|<R}\log\Big|\frac{R^2-\bar{b}_\mu x}{R(x-{b}_\mu)}\Big|+\sum_{|a_\nu|<R}\log\Big|\frac{R^2-\bar{a}_\nu x}{R(x-{a}_\nu)}\Big|.
	\end{split}
	\end{equation*}
% \eject
That is,
\begin{equation}
	\label{E:first-ratio}
	\begin{split}
	&\log \Bigg|\frac{f\big[(q^{1/2}z+q^{-1/2}z^{-1})/2\big]}{f(x)}\Bigg| \\
%=\log \Big|f\big[(q^{1/2}z+q^{-1/2}z^{-1})/2\big]\Big|-\log|f(x)|\\
%	&=\frac{1}{2\pi}\int_0^{2\pi} \log|f(Re^{i\phi})\,|\Re\Big(
%	\frac{Re^{i\phi}+(q^{1/2}z+q^{-1/2}z^{-1})/2}{Re^{i\phi}-(q^{1/2}z+q^{-1/2}z^{-1})/2}\Big)\,d\phi\\
%	&\qquad -\frac{1}{2\pi}\int_0^{2\pi} \log\,|f(Re^{i\phi})|\,\Re
%	\Big(\frac{Re^{i\phi}+x}{Re^{i\phi}-x}\Big)\,d\phi\\
%	&\qquad+\sum_{|b_\mu|<R}\log\Big|\frac{R^2-\bar{b}_\mu(q^{1/2}z+q^{-1/2}z^{-1})/2}
%{R[(q^{1/2}z+q^{-1/2}z^{-1})/2-b_\mu]}\Big|-\sum_{|a_\nu|<R}\log\Big|\frac{R^2-\bar{a}_\nu\argone} {R[\argone-a_\nu]}\Big|\\
%	&\quad-\sum_{|b_\mu|<R}\log\Big|\frac{R^2-\bar{b}_\mu x}{R(x-{b}_\mu)}\Big|+\sum_{|a_\nu|<R}\log\Big|\frac{R^2-\bar{a}_\nu x}{R(x-{a}_\nu)}\Big| \\
	&=\frac{1}{2\pi}\int_0^{2\pi} \log\,|f(Re^{i\phi})\,|\Re\Big(
	\frac{Re^{i\phi}\, (q^{1/2}z+q^{-1/2}z^{-1}-2x)}
	{(Re^{i\phi}-x)[Re^{i\phi}-\argone]}\Big)\,d\phi\\
	\medskip
	&\qquad+\sum_{|b_\mu|<R}\log\Big|\frac{R^2-\bar{b}_\mu\argone}{R^2-\bar{b}_\mu x}\Big|\\
	&\qquad-\sum_{|b_\mu|<R}\log\Big|\frac{R(\argone-b_\mu)}{R(x-{b}_\mu)}\Big|\\
	&\qquad-\sum_{|a_\nu|<R}\log\Big|\frac{R^2-\bar{a}_\nu\argone}{R^2-\bar{a}_\nu x}\Big|\\
	&\qquad+\sum_{|a_\nu|<R}\log\Big|\frac{R(\argone-a_\nu)}{R(x-{a}_\nu)}\Big|\\
	&=\frac{1}{2\pi}\int_0^{2\pi} \log\,|f(Re^{i\phi})\,|\Re\Big(
	\frac{Re^{i\phi}\, [2(q^{1/2}-1)\,x+(q^{-1/2}-q^{1/2})z^{-1}]}
	{(Re^{i\phi}-x)[Re^{i\phi}-(q^{1/2}x+(q^{-1/2}-q^{1/2})z^{-1}/2)]}\Big)\,d\phi\\
	\medskip
	&\qquad+\sum_{|b_\mu|<R}\log\Big|\frac{R^2-\bar{b}_\mu[q^{1/2}x+(q^{-1/2}-q^{1/2})/2\, z^{-1}]}{R^2-\bar{b}_\mu x}\Big|\\
	&\qquad-\sum_{|b_\mu|<R}\log\Big|\frac{[q^{1/2}x+(q^{-1/2}-q^{1/2})/2\, z^{-1}]-b_\mu}{(x-{b}_\mu)}\Big|\\
	&\qquad-\sum_{|a_\nu|<R}\log\Big|\frac{R^2-\bar{a}_\nu[q^{1/2}x+(q^{-1/2}-q^{1/2})/2\, z^{-1}]}{R^2-\bar{a}_\nu x}\Big|\\
	&\qquad+\sum_{|a_\nu|<R}\log\Big|\frac{[q^{1/2}x+(q^{-1/2}-q^{1/2})/2\, z^{-1}]-a_\nu}{(x-{a}_\nu)}\Big|\\
	&=\frac{1}{2\pi}\int_0^{2\pi} \log\,|f(Re^{i\phi})\,|\Re\Big(
	\frac{2\, Re^{i\phi}\, [(q^{1/2}-1)\,x+c(q)\,z^{-1}]}
	{(Re^{i\phi}-x)[Re^{i\phi}-(q^{1/2}x+c(q)\, z^{-1})]}\Big)\,d\phi\\
	\medskip
	&\qquad+\sum_{|b_\mu|<R}\log\Big|\frac{R^2-\bar{b}_\mu[q^{1/2}x+c(q)\, z^{-1}]}{R^2-\bar{b}_\mu x}\Big|-\sum_{|b_\mu|<R}\log\Big|\frac{[q^{1/2}x+c(q)\, z^{-1}]-b_\mu}{x-{b}_\mu}\Big|\\
	&\qquad-\sum_{|a_\nu|<R}\log\Big|\frac{R^2-\bar{a}_\nu[q^{1/2}x+c(q)\, z^{-1}]}{R^2-\bar{a}_\nu x}\Big|+\sum_{|a_\nu|<R}\log\Big|\frac{[q^{1/2}x+c(q)\, z^{-1}]-a_\nu}{x-{a}_\nu}\Big|\\
\end{split}
\end{equation} 
where we have made the substitution (\ref{E:substitution}).
 We let $|x|$ and hence $|z|$ be sufficiently large, so we may assume that 
	\[
		|c(q)z^{-1}|<\min(|q^{-1/2}x|,\,|q^{1/2}x|, |(q^{1/2}-1)x|,\, |(q^{-1/2}-1)x|)
	\]
in the following calculations.

We notice that the integrated logarithmic average term from (\ref{E:first-ratio}) has the following upper bound
	\be
		\label{E:proximity-estimate}
	\begin{split}
	&\Big|\frac{1}{2\pi}\int_0^{2\pi} \log\,|f(Re^{i\phi})\,|\Re\Big(
	\frac{2\, Re^{i\phi}\, [(q^{1/2}-1)\,x+c(q)\,z^{-1}]}
	{(Re^{i\phi}-x)[Re^{i\phi}-(q^{1/2}x+c(q)\, z^{-1})]}\Big)\,d\phi\Big|\\
	&\le \frac{1}{2\pi}\int_0^{2\pi} \big|\log\,|f(Re^{i\phi})\,|\big|
	\frac{4\,R\,|q^{1/2}-1|\,|x|}{(R-|x|)(R-2|q^{1/2}x|)}\,d\phi\\
	&\le \frac{4\,R\,|q^{1/2}-1|\,|x|}{(R-|x|)(R-2|q^{1/2}x|)}\,\Big(m\big(R,\, f\big)+m\big(R,\, \frac1f\big)\Big).
	\end{split}
	\ee
Hence \eqref{E:first-ratio} becomes
	\begin{equation}
		\label{E:first-part}
			\begin{split}
				&\Bigg|\log \Big|\frac{f\big[(q^{1/2}z+q^{-1/2}z^{-1})/2\big]}{f(x)}\Big|\,\Bigg|\\
				&\le  \frac{4\,R\,|q^{1/2}-1|\,|x|}{(R-|x|)(R-2|q^{1/2}x|)}\,\Big(m\big(R,\, f\big)+m\big(R,\, \frac1f\big)\Big)\\
				& +\sum_{|b_\mu|<R}\Bigg|\log\Big|\frac{R^2-\bar{b}_\mu[q^{1/2}x+c(q)\, z^{-1}]}{R^2-\bar{b}_\mu x}\Big|\,\Bigg|+\Bigg|\sum_{|b_\mu|<R}\log\Big|\frac{[q^{1/2}x+c(q)\, z^{-1}]-b_\mu}{x-{b}_\mu}\Big|\,\Bigg|\\
	&+\sum_{|a_\nu|<R}\Bigg|\log\Big|\frac{R^2-\bar{a}_\nu[q^{1/2}x+c(q)\, z^{-1}]}{R^2-\bar{a}_\nu x}\Big|\,\Bigg|+\sum_{|a_\nu|<R}\Bigg|\log\Big|\frac{[q^{1/2}x+c(q)\, z^{-1}]-a_\nu}{x-{a}_\nu}\Big|\,\Bigg|\\
			\end{split}
	\end{equation}
 Applying the Lemma \ref{L:lemma-2} with $\alpha =1$, to each individual term in the first summand of (\ref{E:first-part}) with $|b_\nu|<R$ yields
\begin{align}	
\label{E:first-summand}
	&\Bigg|\log\Big|\frac{R^2-\bar{b}_\mu[q^{1/2}x+c(q)z^{-1}]}{R^2-\bar{b}_\mu x}\Big|\Bigg|\\
	&\le\Big|\frac{\bar{b}_\mu[(1-q^{1/2})x-c(q)z^{-1}] )}{R^2-\bar{b}_\mu[q^{1/2}x+c(q)z^{-1}]}\Big|
	+\Big|\frac{\bar{b}_\mu[(1-q^{1/2})x-c(q)z^{-1}] )}{R^2-\bar{b}_\mu x}\Big|\notag\\
	&\le \frac{2 R|1-q^{1/2}||x|}{R^2-2R|q^{1/2}x|}+\frac{2 R|1-q^{1/2}||x|}{R^2-R |x|}\notag\\	
	&= 2 |q^{1/2}-1||x|\Big(\frac{1}{R-|x|}+\frac{1}{R-2|q^{1/2}x|}\Big)\notag
%	& \le 2 |q^{1/2}-1|\Big(\frac{R}{R-|x|}+\frac{R}{R-(|q^{1/2}x|+d(q))}\Big).\notag
\end{align}
Similarly, we have, for the third summand that for $|a_\mu|<R$,

\begin{align}
\label{E:third-summand}
	&\Bigg|\log\Big|\frac{R^2-\bar{a}_\nu[q^{1/2}x+c(q)z^{-1}]}{R^2-\bar{a}_\nu x}\Big|\Bigg|\\
	&=2 |q^{1/2}-1||x|\Big(\frac{1}{R-|x|}+\frac{1}{R-2|q^{1/2}x|}\Big)\notag
%	& \le 2 |q^{1/2}-1|\Big(\frac{R}{R-|x|}+\frac{R}{R-(|q^{1/2}x|+d(q))}\Big).\notag
\end{align}
Again applying the Lemma \ref{L:lemma-2} with $0\le\alpha<1$ to each individual term in the second summand of (\ref{E:first-ratio}) yields
	\begin{align}
		\label{E:second-summand}
			&\Bigg|\log\Big|\frac{q^{1/2}x+c(q)z^{-1}-b_\mu}{x-{b}_\mu}\Big|\Bigg|\\
			&\le C_\alpha\Big(\Big|\frac{(q^{1/2}-1)x+c(q)z^{-1}}{q^{1/2}x+c(q)z^{-1}-b_{\mu}}\Big|^\alpha+  \Big|\frac{(q^{1/2}-1)x+c(q)z^{-1}}{x-b_\mu}\Big|^\alpha\Big)\notag\\
			&\le 2C_\alpha (|q^{1/2}-1|^\alpha|x|^\alpha)
		\Big(\frac{1}{|x-b_\mu|^\alpha}+\frac{1}{|(q^{1/2}x+c(q)z^{-1}-b_{\mu}|^\alpha}\Big)\notag
	\end{align}	

Similarly, we have, for the fourth summand,
\begin{align}
\label{E:fourth-summand}
	&\Bigg|\log\Big|\frac{q^{1/2}x+c(q)z^{-1}-a_\nu}{x-{a}_\nu}\Big|\Bigg|\\
%			&\le C_\alpha\Big(\Big|\frac{(q^{1/2}-1)x+c(q)z^{-1}}{q^{1/2}x+c(q)z^{-1}-a_{\mu}}\Big|^\alpha+  \Big|\frac{(q^{1/2}-1)x+c(q)z^{-1}}{x-a_\nu}\Big|^\alpha\Big)\notag\\
			&\le 2C_\alpha (|q^{1/2}-1|^\alpha|x|^\alpha)
		\Big(\frac{1}{|x-a_\nu|^\alpha}+\frac{1}{|(q^{1/2}x+c(q)z^{-1}-a_{\nu}|^\alpha}\Big).\notag
	\end{align}	

Combining the inequalities (\ref{E:first-part}), (\ref{E:first-summand}--\ref{E:fourth-summand}) yields
	\begin{equation}
		\label{E:first-part-estimate}
			\begin{split}
				&\Bigg|\log \Big|\frac{f\big[(q^{1/2}z+q^{-1/2}z^{-1})/2\big]}{f(x)}\Big|\,\Bigg|\\
				&\le  \frac{4\,R\,|q^{1/2}-1|\,|x|}{(R-|x|)[R-2|q^{1/2}x|]}\,\Big(m\big(R,\, f\big)+m\big(R,\, \frac1f\big)\Big)\\
				& \qquad+  2 |q^{1/2}-1||x|\Big(\frac{1}{R-|x|}+\frac{1}{R-2|q^{1/2}x|}\Big)\Big(n(R,\, f)+n(R,\,\frac{1}{f})\Big)\\
				& \qquad + 2C_\alpha (|q^{1/2}-1|^\alpha|x|^\alpha)
		\sum_{|a_\nu|<R} \Big(\frac{1}{|x-a_\nu|^\alpha}+\frac{1}{|(q^{1/2}x+c(q)z^{-1}-a_{\nu}|^\alpha}\Big)\\
				& \qquad + 2C_\alpha (|q^{1/2}-1|^\alpha|x|^\alpha)
		\sum_{|b_\mu|<R} \Big(\frac{1}{|x-b_\mu|^\alpha}+\frac{1}{|(q^{1/2}x+c(q)z^{-1}-b_{\mu}|^\alpha}\Big)\\
		&=  \frac{4\,R\,|q^{1/2}-1|\,|x|}{(R-|x|)[R-2|q^{1/2}x|]}\,\Big(m\big(R,\, f\big)+m\big(R,\, \frac1f\big)\Big)\\
				& \qquad+  2 |q^{1/2}-1||x|\Big(\frac{1}{R-|x|}+\frac{1}{R-2|q^{1/2}x|}\Big)\Big(n(R,\, f)+n(R,\,\frac{1}{f})\Big)\\
				& \qquad + 2C_\alpha (|q^{1/2}-1|^\alpha|x|^\alpha)
		\sum_{|c_n|<R} \Big(\frac{1}{|x-c_n|^\alpha}+\frac{1}{|(q^{1/2}x+c(q)z^{-1}-c_{n}|^\alpha}\Big)\\
			\end{split}
	\end{equation}
\medskip
where we have re-labelled all the zeros $\{a_\nu\}$ and poles $\{b_\mu\}$ by the single sequence $\{c_n\}$. 

Replacing $q$ by $q^{-1}$ in the (\ref{E:first-part-estimate}), we obtain for $|x|$ sufficiently large 
\begin{equation}
		\label{E:second-part-estimate}
			\begin{split}
				&\Bigg|\log \Big|\frac{f\big[(q^{-1/2}z+q^{1/2}z^{-1})/2\big]}{f(x)}\Big|\,\Bigg|\\
				&\le  \frac{4\,R\,|q^{-1/2}-1|\,|x|}{(R-|x|)[R-2|q^{-1/2}x|]}\,\Big(m\big(R,\, f\big)+m\big(R,\, \frac1f\big)\Big)\\
				& \qquad+  2 |q^{-1/2}-1||x|\Big(\frac{1}{R-|x|}+\frac{1}{R-2|q^{-1/2}x|}\Big)\Big(n(R,\, f)+n(R,\,\frac{1}{f})\Big)\\
				& \qquad + 2C_\alpha (|q^{-1/2}-1|^\alpha|x|^\alpha)
		\sum_{|c_n|<R} \Big(\frac{1}{|x-c_n|^\alpha}+\frac{1}{|(q^{-1/2}x-c(q)z^{-1}-c_{n}|^\alpha}\Big)
			\end{split}
	\end{equation}
Substituting the (\ref{E:first-part-estimate}) and (\ref{E:second-part-estimate}) into (\ref{E:first-split}) yields \eqref{E:first-part-1}.
\end{proof}

\subsubsection*{Proof of Theorem \ref{T:pointwise}} 

In order to give an upper bound estimate for the last three summands of the above sum, we need to avoid exceptional sets arising from the sequence given by 
	\[
		\{d_n\}=\{c_n\}\cup\{c_n\,q^{1/2}\}\cup\{c_n\,q^{-1/2}\}.
	\]
 Given $\varepsilon>0$, let 
 	\begin{equation}
		\label{E:exceptional-a}
			E_n=\Bigg\{r:\ r\in \Big[|d_n|-\frac{|d_n|}{\log^{\sigma_{\log}+\varepsilon} (|d_n|+3)},\ 
			|d_n|+\frac{|d_n|}{\log^{\sigma_{\log}+\varepsilon} (|d_n|+3)}\Big]
			\Bigg\}
	\end{equation}
and
	\[
		E=\cup_{n} E_n.
	\]
 Henceforth we consider the $|x|\not\in E$. It is not difficult to see the inequality 
	\begin{equation}
		|x-d_n|\ge \big||x|-|d_n|\big| \ge \frac{|x|}{2\log^{\sigma_{\log}+\varepsilon} (|x|+3)}.
	\end{equation}
holds for all $|x|$ sufficiently large. Thus 
	\begin{equation}
		\label{E:final-summand-a}
	 \sum_{|c_n|<R}\frac{1}{|x-c_n|^\alpha}\le
		\frac{2^\alpha\log^{\alpha\,(\sigma_{\log}+\varepsilon)}(|x|+3)}{|x|^\alpha}\Big(n(R,\, f)+n(R,\, \frac1f)\Big).
	\end{equation}
 Similarly, we have 
	\begin{equation}
		\begin{split}
		|x+c(q)q^{-1/2}z^{-1}-q^{-1/2}c_{n}| &\ge |x-q^{-1/2}c_{n}|-|c(q)q^{-1/2}z^{-1}|\\
		&\ge \big||x|-|q^{-1/2}c_{n}|\,\big|-|c(q)q^{-1/2}z^{-1}|\\
		&\ge \frac{|x|}{2\log^{\sigma_{\log}+\varepsilon} (|x|+3)}-|c(q)q^{-1/2}z^{-1}|\\
		&\ge \frac{|x|}{3\log^{\sigma_{\log}+\varepsilon} (|x|+3)}.
		\end{split}
	\end{equation}
and
	\begin{equation}
		|x-c(q)q^{1/2}z^{-1}-q^{1/2}c_{n}| \ge 
		\frac{|x|}{3\log^{\sigma_{\log}+\varepsilon} (|x|+3)}
	\end{equation}
holds for $|x|$ sufficiently large. Hence
	\begin{equation}
		\label{E:final-summand-b}
	 \sum_{|c_n|<R}\frac{1}{|x+c(q)q^{-1/2}z^{-1}-q^{-1/2}c_{n}|^\alpha}\le
		\frac{3^\alpha\log^{\alpha(\sigma_{\log}+\varepsilon)}(|x|+3)}{|x|^\alpha}\Big(n(R,\, f)+n(R,\, \frac1f)\Big)
	\end{equation}
and
\begin{equation}
		\label{E:final-summand-c}
	 \sum_{|c_n|<R}\frac{1}{|x-c(q)q^{1/2}z^{-1}-q^{1/2}c_{n}|^\alpha}\le
		\frac{3^\alpha\log^{\alpha(\sigma_{\log}+\varepsilon)}(|x|+3)}{|x|^\alpha}\Big(n(R,\, f)+n(R,\, \frac1f)\Big).
	\end{equation}

\noindent 
We obtain from (\ref{E:first-part-1}), after substituting (\ref{E:final-summand-a}), (\ref{E:final-summand-b}--\ref{E:final-summand-c}), the  inequality
\begin{equation}
		\begin{split}
		\label{E:pointwise-lemma}
			&\log^+\Big|\frac{(\mathcal{D}_q f)(x)}{f(x)}\Big|
		\le  \frac{4\,R\,(|q^{1/2}-1|+|q^{-1/2}-1|)\,|x|}{(R-|x|)[R-2(|q^{1/2}|+|q^{-1/2}|)|x|]}\,\Big(m\big(R,\, f\big)+m\big(R,\, \frac1f\big)\Big)\\
				& +  2 (|q^{1/2}-1|+|q^{-1/2}-1|)|x|
				\Big(\frac{1}{R-|x|}+\frac{1}{R-2(|q^{1/2}|+|q^{-1/2}|)|x|}\Big)\\
				&\qquad \times \Big(n(R,\, f)+n(R,\,\frac{1}{f})\Big)\\
				&+ D_\alpha \,(|q^{1/2}-1|^\alpha+|q^{-1/2}-1|^\alpha)\log^{\alpha(\sigma_{\log}+\varepsilon)}(|x|+3)\Big(n(R,\, f)+n(R,\,\frac{1}{f})\Big)
		+\log 2
		\end{split}
	\end{equation}
where $D_\alpha:=4\,C_\alpha 3^\alpha$, $|x|\not\in E$.

On the other hand,
	\begin{equation}
			\begin{split}
			&N(R^2,\, f) \ge \int_R^{R^2} \frac{n(t,\, f)-n(0,\, f)}{t}\,dt
			+n(0,\, f)\log R^2\\
			&\ge n(R,\, f)\int_R^{R^2} \frac{1}{t}\, dt
			-n(0,\, f) \int_R^{R^2} \frac{dt}{t}+n(0,\, f)\log R^2\\
			&\ge n(R,\, f)\, \log R.
		\end{split}
	\end{equation}
 Hence
	\begin{align}
	\label{E:n-upper}
		&n(R,\, f)\le \frac{N(R^2,\, f)}{\log R}=\frac{O[(\log R^2)^{\sigma_{\log}+\frac{\varepsilon}{2}}]}{\log R}\\
		&=O\big(\log^{\sigma_{\log}-1+\frac{\varepsilon}{2}} R\big).\notag
	\end{align}
Similarly, we have 
	\begin{equation}
	\label{E:n-lower}
		n\Big(R,\, \frac1f\Big)=O\big(\log^{\sigma_{\log}-1+\frac{\varepsilon}{2}} R\big).
	\end{equation}
 We now choose $\alpha=\frac{\varepsilon}{2(\sigma_{\log}+\varepsilon)}$ and substitute $|x|=r$, $R=r\,\log r$ into Lemma \ref{T:pointwise-lemma} to obtain the (\ref{E:pointwise}).

 We now compute the logarithmic measure of $E$. To do so, we first note the elementary inequality that given $\delta>0$ sufficiently small, there is a positive constant $C_\delta$ so that 
	\begin{equation}
		\log\frac{1+t}{1-t}\le C_\delta t
	\end{equation}
for $0\le t<\delta$. We assume, in the case when there are infinitely many $\{d_n\}$ (otherwise, the logarithmic measure of $E$ is obviously finite), they are ordered in the increasing moduli. Then we choose an $N$ sufficiently large such that
	\begin{equation}
			\frac{1}{{\log}^{\sigma_{\log}+\varepsilon}|d_N|}<\delta.
	\end{equation}
 Hence
	\begin{equation}
		\begin{split}
		&\text{log-meas\, } E =
		\int_{E\cap [1,\, +\infty)}\frac{dt}{t}=\int_{E\cap [1,\,|d_N|]}\frac{dt}{t}+\int_{E \cap[|d_N|,\, +\infty)}\frac{dt}{t}\\
		&\le \log |d_N|+\sum_{n=N}^\infty\int_{E_n}\frac{dt}{t}=\log|d_N|+\sum_{n=N}^\infty \log
			\Big(
			\frac{1+1/{\log^{\sigma_{\log}+\varepsilon} }|d_n|}{1-1/{\log^{\sigma_{\log}+\varepsilon} }|d_n|}
			\Big)\\
			\notag
			&\le \log |d_N|+C_\delta\sum_{n=N}^\infty\frac{1}
		{\log^{\sigma_{\log}+\varepsilon} |d_n|}< \infty
			\end{split}
		\end{equation}
where the conclusion of the last sum is convergent follows from \cite[Lemma 4.2]{Chern2005} for meromorphic functions of finite logarithm order $\sigma_{\log}$.
\qed

\subsubsection*{Proof of Theorem \ref{T:log-lemma}}
We first prove a crucial estimate. 
\begin{lemma}\label{L:trouble}
Let $0<\alpha<1$ be given. Then for each fixed $A\in\mathbb{C}$ and an arbitrary $w$ and we have
	\begin{equation}
		\label{E:trouble}
			\int_0^{2\pi} \frac{1}{| re^{i\phi}-A(re^{i\phi}-\sqrt{r^2e^{2i\phi}-1})-w|^\alpha}
			d\phi
			\le \frac{E_\alpha}{r^\alpha}
	\end{equation}
for all $r$ sufficiently large, where $E_\alpha$ is independent of $w$. Here the square root is given by $\sqrt{r^2e^{2i\phi}-1}\approx re^{i\phi}$ as $re^{i\phi}\to\infty$ (as the agreed convention in \S2).
\end{lemma}
\medskip

\begin{remark}  We note that when $A=0$ in the above lemma, with the same $0<\alpha<1$ and $w\in\mathbb{C}$ arbitrary,  recovers the known estimate
	\begin{equation}
		\label{E:lemma-3}
		    \frac{1}{2\pi}\int_0^{2\pi}\frac{1}{|re^{i\phi}-w|^{\alpha}}\,
		    d\phi\leq \frac{1}{(1-\alpha)r^{\alpha}},
	\end{equation}
holds for all $r>0$ (See e.g., \cite[p. 62]{He:Xiao1988} and \cite[p. 66]{Jank:Volkman1985}).
\end{remark}
\medskip

\begin{proof} Since for each $r$ large enough,
	\begin{equation}
		\label{E:curve}
			\Big\{ re^{i\phi}-A(re^{i\phi}-\sqrt{r^2e^{2i\phi}-1})
			\ : \
			0\le \phi\le 2 \pi\Big\}
	\end{equation}
 is a closed curve enclosing the origin, therefore, for each non-zero $w$, the curve must intersect with the array
	\begin{equation}
		\label{E:array}
			\big\{tw\, :\ 0<t<\infty\big\}
	\end{equation}
at least once. So we can find a \textit{real} $C>0$ and $0\le \beta<2\pi$ such that we can represent $w$ as
	\begin{equation}
		\label{E:w}
			w=C\big( re^{i\beta}-A(re^{i\beta}-\sqrt{r^2e^{2i\beta}-1})\big).
	\end{equation}
When $w=0$, we can still represent $w$  by \eqref{E:w} with $C=0$.

Substituting \eqref{E:w} into the integral on the left side of \eqref{E:trouble} yileds

\begin{equation}
		\label{E:trouble-2}
			\begin{split}
				&\int_0^{2\pi} \frac{1}{| re^{i\phi}-A(re^{i\phi}-\sqrt{r^2e^{2i\phi}-1})-w|^\alpha}
			d\phi \\
			&=\int_0^{2\pi} \frac{d\phi}{\big|re^{i\phi}-Cre^{i\beta}-
			A\big[(re^{i\phi}-\sqrt{r^2e^{2i\phi}-1})-C(re^{i\beta}-\sqrt{r^2e^{2i\beta}-1})\big]\big|^\alpha}\\
			&=\int_0^{2\pi} \frac{d\phi}{\big|re^{i(\phi-\beta)}-Cr-
			A\big[(re^{i(\phi-\beta)}-\sqrt{r^2 e^{2i(\phi-\beta)}-e^{-2i\beta}})-C(r-\sqrt{r^2-e^{-2i\beta}})\big]\big|^\alpha}\\
			&=\int_{-\beta}^{2\pi-\beta} \frac{d\phi}{\big|re^{i\phi}-Cr-
			A\big[(re^{i\phi}-\sqrt{r^2 e^{2i\phi}-e^{-2i\beta}})-C(r-\sqrt{r^2-e^{-2i\beta}})\big]\big|^\alpha}\\
			&=\Big(\int_{-\frac{\pi}{2}}^{\frac\pi2} + \int_{\frac{\pi}{2}}^{\frac{3\pi}{2}}\Big)\\
			&\qquad \frac{d\phi}{\big|re^{i\phi}-Cr-
			A\big[(re^{i\phi}-\sqrt{r^2 e^{2i\phi}-e^{-2i\beta}})-C(r-\sqrt{r^2-e^{-2i\beta}})\big]\big|^\alpha}\\
			&:=I_1+ I_2.
			\end{split}
	\end{equation}

We now estimate $I_1$.  We first consider
	\begin{equation}\label{E:trouble-2.5}
		\begin{split}
			&\big|re^{i\phi}-Cr-
			A\big[(re^{i\phi}-\sqrt{r^2 e^{2i\phi}-e^{-2i\beta}})-C(r-\sqrt{r^2-e^{-2i\beta}})\big]\big|\\
			&=\big| (1-C)(r-A(r-\sqrt{r^2-e^{-2i\beta}}))\\
			&\quad +r(e^{i\phi}-1)- A(re^{i\phi}-r+\sqrt{r^2-e^{-2i\beta}}
			-\sqrt{r^2 e^{2i\phi}-e^{-2i\beta}})\big|\\
			&=|r-A(r-\sqrt{r^2-e^{-2i\beta}})|\\
			&\times \Big| (1-C)+
			\frac{r(e^{i\phi}-1)- A(re^{i\phi}-r+\sqrt{r^2-e^{-2i\beta}}
			-\sqrt{r^2 e^{2i\phi}-e^{-2i\beta}})}
			{r-A(r-\sqrt{r^2-e^{-2i\beta}})}\Big|.
		\end{split}
	\end{equation}
We see (from the agreed convention) that when and $|\phi|<\pi/2$ and $r\to \infty$, 
	\begin{equation}\label{E:trouble-3}
		{re^{i\phi}}-\sqrt{r^2 e^{2i\phi}-e^{-2i\beta}}=\frac{e^{-2i\beta}}{re^{i\phi}+\sqrt{r^2 e^{2i\phi}+e^{-2i\beta}}}
		=O(1/r)
	\end{equation}
and in particular, 
	\begin{equation}
	\label{E:trouble-3.25}
		r-\sqrt{r^2-e^{-2i\beta}}=O(1/r),
	\end{equation}
where and henceforth in the rest of this proof the big-$O$ notation  denotes constants, not necessary the same each time,  are \textit{independent of} $w$ and $\phi$, though they may depend on the (corresponding) $A$. Then for all $r$ sufficiently large,
	\begin{equation}
		\label{E:trouble-3.5}
		|r-A(r-\sqrt{r^2-e^{-2i\beta}})|\ge \frac{r}{2}.
	\end{equation}
	
Since $C$ is real, we deduce
	\begin{equation}	
	 \label{E:trouble-3.75}
		\begin{split}
			&
			\Big| (1-C)+
			\frac{r(e^{i\phi}-1)- A(re^{i\phi}-r+\sqrt{r^2-e^{-2i\beta}}
			-\sqrt{r^2 e^{2i\phi}-e^{-2i\beta}})}
			{r-A(r-\sqrt{r^2-e^{-2i\beta}})}\Big|\\
			&\ge \Big|\Im\Big(
			\frac{r(e^{i\phi}-1)- A(re^{i\phi}-r+\sqrt{r^2-e^{-2i\beta}}
			-\sqrt{r^2 e^{2i\phi}-e^{-2i\beta}})}
			{r-A(r-\sqrt{r^2-e^{-2i\beta}})}
			\Big)\Big|\\
			&=\Big|
			\Re\Big(r(e^{i\phi}-1)- A(re^{i\phi}-r+\sqrt{r^2-e^{-2i\beta}}
			-\sqrt{r^2 e^{2i\phi}-e^{-2i\beta}})\Big)\\
			&\qquad\times
				\Im\Big(\frac{1}{r-A(r-\sqrt{r^2-e^{-2i\beta}})}\Big)\\
			&\quad	+
			\Im\Big(r(e^{i\phi}-1)- A(re^{i\phi}-r+\sqrt{r^2-e^{-2i\beta}}
			-\sqrt{r^2 e^{2i\phi}-e^{-2i\beta}})\Big)\\
			&\qquad\times
				\Re\Big(\frac{1}{r-A(r-\sqrt{r^2-e^{-2i\beta}})}\Big)
			\Big|.
		\end{split}
	\end{equation}
We deduce from \eqref{E:trouble-3.25}, 
		\begin{equation}
			\label{E:trouble-5}
			\Re\Big(\frac{1}{r-A(r-\sqrt{r^2-e^{-2i\beta}})}\Big)=\frac1r+O(\frac{1}{r^3})
		\end{equation}
and
		\begin{equation}
			\label{E:trouble-7}
			\Im\Big(\frac{1}{r-A(r-\sqrt{r^2-e^{-2i\beta}})}\Big)=O(\frac{1}{r^3})
		\end{equation}
as $r\to\infty$.

Let us now estimate $|re^{i\phi}-r+\sqrt{r^2-e^{-2i\beta}}
			-\sqrt{r^2 e^{2i\phi}-e^{-2i\beta}}|$, which we rewrite into the form
	\begin{equation}
		\begin{split}
			& \big| re^{i\phi}-r+\sqrt{r^2-e^{-2i\beta}}-\sqrt{r^2 e^{2i\phi}-e^{-2i\beta}}\big|\\
			&=\Big|\frac{(re^{i\phi}-r) \big(\sqrt{r^2-e^{-2i\beta}}+\sqrt{r^2 e^{2i\phi}-e^{-2i\beta}}\big)
			+(r^2-r^2e^{2i\phi})}
			{\sqrt{r^2-e^{-2i\beta}}+\sqrt{r^2 e^{2i\phi}-e^{-2i\beta}}}\Big|\\
%			&=
%			|re^{i\phi}-r|\cdot \Big|
%			\frac{\sqrt{r^2-e^{-2i\beta}}+\sqrt{r^2 e^{2i\phi}-e^{-2i\beta}}-r-re^{i\phi}}
%			{\sqrt{r^2-e^{-2i\beta}}+\sqrt{r^2 e^{2i\phi}-e^{-2i\beta}}}\Big|\\
			&=|re^{i\phi}-r|\cdot \Big|
			\frac{(r-\sqrt{r^2-e^{-2i\beta}})+(re^{i\phi}-\sqrt{r^2 e^{2i\phi}-e^{-2i\beta}})}
			{\sqrt{r^2-e^{-2i\beta}}+\sqrt{r^2 e^{2i\phi}-e^{-2i\beta}}}\Big|\\
			& = r|1-e^{i\phi}|  \cdot \Big|
			 \frac{O(1/r )}
			{r+re^{i\phi}+O(1/r)}\Big|,  
		\end{split}
	\end{equation}				
where we have applied the estimates \eqref{E:trouble-3} and \eqref{E:trouble-3.25} to both the numerator and denominator in the last step above. When $|\phi|<\frac{\pi}{2}$ and $r$ sufficiently large
	\[
		|r+re^{i\phi}+O(1/r)|\ge \sqrt{2} r-O\big(1/r\big)\ge r,
	\]
 hence
	 \begin{equation}
	 	\label{E:trouble-3.11}
		\begin{split}
			& \big| re^{i\phi}-r+\sqrt{r^2-e^{-2i\beta}}-\sqrt{r^2 e^{2i\phi}-e^{-2i\beta}}\big|\\
			&= |1-e^{i\phi}|  \cdot O(1/r) = O\Big(\frac{|\phi|}{r}\Big).
		\end{split}
	\end{equation}
\medskip
Therefore
	\begin{equation}
		\label{E:trouble-3.13}
			\begin{split}
			|\Re\Big(r(e^{i\phi}-1) &- A(re^{i\phi}-r+\sqrt{r^2-e^{-2i\beta}}
			-\sqrt{r^2 e^{2i\phi}-e^{-2i\beta}})\Big)|\\
			&= r|\cos\phi-1|+O\Big(\frac{|\phi|}{r}\Big)=O(r|\phi |),
		\end{split}
	\end{equation}
and
	\begin{equation}
		\label{E:trouble-3.15}
			\begin{split}
				\big| \Im\Big(r(e^{i\phi}-1)- & A(re^{i\phi}-r+\sqrt{r^2-e^{-2i\beta}}
			-\sqrt{r^2 e^{2i\phi}-e^{-2i\beta}})\Big) \big|\\
			&\ge r|\sin\phi|+O\Big(\frac{|\phi|}{r}\Big)\\
			&\ge (2/\pi) r|\phi |+O\Big(\frac{|\phi|}{r}\Big)\ge  r|\phi |/2.
			\end{split}
	\end{equation}
Substituting \eqref{E:trouble-3.13}, \eqref{E:trouble-3.15}, \eqref{E:trouble-5} and \eqref{E:trouble-7} into \eqref{E:trouble-3.75} yields the inequality
 \begin{equation}	
	 \label{E:trouble-3.17}
		\begin{split}
			\Big| (1-&C) +
			\frac{r(e^{i\phi}-1)- A(re^{i\phi}-r+\sqrt{r^2-e^{-2i\beta}}
			-\sqrt{r^2 e^{2i\phi}-e^{-2i\beta}})}
			{r-A(r-\sqrt{r^2-e^{-2i\beta}})}\Big|\\
			&		
			\ge \Big|
			\Im\Big(r(e^{i\phi}-1)- A(re^{i\phi}-r+\sqrt{r^2-e^{-2i\beta}}
			-\sqrt{r^2 e^{2i\phi}-e^{-2i\beta}})\Big)\\
			&\qquad\times
				\Re\Big(\frac{1}{r-A(r-\sqrt{r^2-e^{-2i\beta}})}\Big)\Big|\\
			&\quad
			 - \Big|
			\Re\Big(r(e^{i\phi}-1)- A(re^{i\phi}-r+\sqrt{r^2-e^{-2i\beta}}
			-\sqrt{r^2 e^{2i\phi}-e^{-2i\beta}})\Big)\\
			&\qquad\times
				\Im\Big(\frac{1}{r-A(r-\sqrt{r^2-e^{-2i\beta}})}\Big)\Big|\\
			&\ge  \frac{r|\phi |}{2}\cdot \Big(\frac1r+O(\frac{1}{r^3})\Big)
			  -O(r|\phi|)\cdot O(\frac{1}{r^3})\\
			&=\frac{|\phi |}{2} \Big(1+O\Big(\frac{1}{r^2}\Big)\Big)\ge \frac{|\phi |}{3},
		\end{split}
\end{equation}
as $r\to \infty$.

We substitute \eqref{E:trouble-3.5} and \eqref{E:trouble-3.17} into \eqref{E:trouble-2.5} give
		\begin{equation}
			\label{E:trouble-3.18}
%			\begin{split}
		\big|re^{i\phi}-  Cr-
			A\big[(re^{i\phi}-\sqrt{r^2 e^{2i\phi}-e^{-2i\beta}})-C(r-\sqrt{r^2-e^{-2i\beta}})\big]\big|
			\ge r|\phi |/6
%			\end{split}
		\end{equation}
for $r$ large enough. It follows from \eqref{E:trouble-3.18} that
	\begin{equation}
		\label{E:trouble:3.19}
		\begin{split}
			I_1 &=\int_{-\frac{\pi}{2}}^{\frac\pi2} 
			\frac{d\phi}{\big|re^{i\phi}-Cr-
			A\big[(re^{i\phi}-\sqrt{r^2 e^{2i\phi}-e^{-2i\beta}})-C(r-\sqrt{r^2-e^{-2i\beta}})\big]\big|^\alpha}\\
			       &\le \frac{6^\alpha}{r^\alpha} \int_{-\frac{\pi}{2}}^{\frac\pi2} 
			       \frac{1}{|\phi |^\alpha} \, d\phi=\frac{12^\alpha \cdot \pi^{1-\alpha}}{(1-\alpha)r^\alpha}
		\end{split}
	\end{equation}
for all $r$ sufficiently large. We can convert the integration range  
$(\frac{\pi}{2},\, \frac{3\pi}{2})$ of $I_2$ into that of $(-\frac{\pi}{2},\, \frac{\pi}{2})$  by
	\begin{equation}
		\label{E:I_2}
			\begin{split}			
			&I_2=\\
			&=\int_{-\frac{\pi}{2}}^{\frac\pi2} 
			\frac{d\phi}{\big|re^{i(\phi+\pi)}-Cr-
			A\big[(re^{i(\phi+\pi)}-\sqrt{r^2 e^{2i(\phi+\pi)}-e^{-2i\beta}})-C(r-\sqrt{r^2-e^{-2i\beta}})\big]\big|^\alpha}\\
%			&=\int_{-\frac{\pi}{2}}^{\frac\pi2} 
%			\frac{d\phi}{\big|-re^{i\phi}-Cr-
%			A\big[(-re^{i\phi}-\sqrt{(-r e^{i\phi})^2-e^{-2i\beta}})-C(r-\sqrt{r^2-e^{-2i\beta}})\big]\big|^\alpha}\\
			&=\int_{-\frac{\pi}{2}}^{\frac\pi2} 
			\frac{d\phi}{\big|-re^{i\phi}-Cr-
			A\big[(-re^{i\phi}+\sqrt{(r e^{i\phi})^2-e^{-2i\beta}})-C(r-\sqrt{r^2-e^{-2i\beta}})\big]\big|^\alpha}\\
			&=\int_{-\frac{\pi}{2}}^{\frac\pi2} 
			\frac{d\phi}{\big|re^{i\phi}+Cr-
			A\big[(re^{i\phi}-\sqrt{(r e^{i\phi})^2-e^{-2i\beta}})+C(r-\sqrt{r^2-e^{-2i\beta}})\big]\big|^\alpha}\\
			\end{split}
	\end{equation}
Notice that the integrand of this integral is the same as that of $I_1$ with $C$ replaced by $-C$. We can easily see the above estimate for $I_1$ is still valid when $C$ is replaced by $-C$. Hence 
		\[
			I_2\le \frac{12^\alpha \cdot \pi^{1-\alpha}}{(1-\alpha)r^\alpha}
		\]
for all $r$ sufficiently large. This completes the proof with $E_\alpha:=2\frac{12^\alpha \cdot \pi^{1-\alpha}}{(1-\alpha)}$.
 \end{proof}
\medskip

\subsubsection*{Completion of the proof of Theorem \ref{T:log-lemma}}

Let us choose $R=r\log r$. Then we integrate the inequality \eqref{E:first-part-1} from $0$ to $2\pi$ and apply the inequalities \eqref{E:trouble} and \eqref{E:lemma-3} to yield
\begin{equation}
		\begin{split}
%		\label{E:first-part-1}
			&m\Big( r,\, \frac{(\mathcal{D}_q f)(x)}{f(x)}\Big)
			\le O\Big( \frac{1}{\log r}\Big) \cdot 
			\Big(m\big(r\log r,\, f)+ m\big( r\log r,\, 1/f)\Big)\\
			&\qquad+O\Big(n\big(r\log r, f\big)+ n\big(r\log r, 1/f\big)\Big)+O(1)\\
			&\quad +O(r^\alpha) \cdot \sum_{|c_n|\le r\log r}\int_0^{2\pi}\frac{1}{|re^{i\phi}-c_n|^{\alpha}}\,d\phi\\
			&\quad +O(r^\alpha) \cdot \sum_{|c_n|\le r\log r}\int_0^{2\pi}
			\frac{1}{|x+c(q)q^{-1/2}z^{-1}-q^{-1/2}c_{n}|^\alpha}\,
			d\phi\\
			&\quad+O(r^\alpha) \cdot \sum_{|c_n|\le r\log r}\int_0^{2\pi}
			\frac{1}{|x-c(q)q^{1/2}z^{-1}-q^{1/2}c_{n}|^\alpha}\, d\phi+O(1)\\
			&=O\Big( \frac{1}{\log r}\Big) \cdot 
			\Big(m\big(r\log r,\, f)+ m\big( r\log r,\, 1/f)\Big)\\
			&\qquad+O\Big(n\big(r\log r, f\big)+ n\big(r\log r, 1/f\big)\Big)+O(1).
		 		\end{split}
	\end{equation}
As a result, we obtained the desired estimate \eqref{E:log-lemma} after applying \eqref{E:n-upper} and \eqref{E:n-lower}.

\section{\text{Askey-Wilson type counting functions and proof of Theorem \ref{T:integrated-counting}}}\label{S:proof_counting}
We need to set up some preliminary estimates first.

Let $g:\mathbb{C}\longrightarrow\mathbb{C}$ be a map, \textit{not necessary} entire. Let $f$ be a meromorphic function on $\mathbb{C}$ and $a\in\hat{\mathbb{C}}$, we define the counting function $n(r,\, f(g(x))=a)$ to be the number of $a-$points of $f$, counted according to multiplicity of $f=a$ at the point $g(x)$, in $\{g(x):\, |x|<r\}$.  The \textit{integrated counting function} is defined by
\be
\label{E:counting-fn}
	\begin{split}
	N\big(r,\, f(g(x))=a\big) = & \int_0^r\frac{n\big(t,\, f(g(x))=a\big)-n\big(0,\, f(g(x))=a\big)}{t}\, dt\\
		&\quad +n\big(0,\, f(g(x))=a\big)\log r.
	\end{split}
\ee
 For $z=e^{i\theta}$, we shall write (\ref{E:AW-operator}) in the following notation
\be
\label{E:AW-operator-hat}
	\big(\mathcal{D}_qf\big)(x):=\frac{\breve{f}(q^{\frac12}e^{i\theta})-\breve{f}(q^{-\frac12}e^{i\theta})}
	{\breve{e}(q^{\frac12}e^{i\theta})-\breve{e}(q^{-\frac12}e^{i\theta})}
	=\frac{f(\hat{x}_q)-f(\check{x}_q)}{\hat{x}_q-\check{x}_q}
	=\frac{f(\hat{x})-f(\check{x})}{\hat{x}-\check{x}}	
\end{equation}
where 
\be
\label{E:AW-hat}
	\hat{x}=\hat{x}_q:=\frac{q^{1/2}z+q^{-1/2}z^{-1}}{2},\qquad \check{x}=\check{x}_q:=\frac{q^{-1/2}z+q^{1/2}z^{-1}}{2}.
\ee
Note that the maps $\hat{x}$ and $\check{x}$ are analytic and invertible ($\label{}
	\hat{\check{x}}=\check{\hat{x}}=x$) when $|x|$ is sufficiently large.

The Theorem \ref{T:integrated-counting} is a direct consequence of the following Theorem. 

\begin{theorem}
\label{T:counting-est}
Let $f$ be a meromorphic function of finite logarithmic order $\sigma_{\log}(f)~\ge~1$. Then for each extended complex number $a\in\widehat{\mathbb{C}}$, and each $\varepsilon>0$, we have 
\be
\label{E:counting-est-1}
   N\big(r,\, f(\hat{x})=a\big) =N\big(r,\, f(x)=a\big) +
    O\big((\log r)^{\sigma_{\log}-1+\varepsilon}\big)+O(\log r).
\ee
and similarly,
\be
\label{E:counting-est-2}
    N\big(r,\, f(\check{x})=a\big)= N\big(r,\, f(x)=a\big)+
    O\big((\log r)^{\sigma_{\log}-1+\varepsilon}\big)+O(\log r),
\ee
\be
\label{E:counting-est-3}
    N\big(r,\, f(\hat{\hat{x}})=a\big)= N\big(r,\, f(x)=a\big)+
    O\big((\log r)^{\sigma_{\log}-1+\varepsilon}\big)+O(\log r).
\ee
Here the meaning of $N(r,\,f(\hat{x})=a)$ is interpreted as taking $g(x)=\hat{x}$ mentioned above. The expressions $N\big(r,\, f(\check{x})=a)$ and $N\big(r,\, f(\hat{\hat{x}})=a)$ have similar interpretations.
\end{theorem}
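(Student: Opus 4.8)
The plan is to compare the $a$-points of $f(\hat x)$ in the disc $\{|x|<r\}$ with the $a$-points of $f(x)$ in the same disc. Since $\hat x$ is an analytic bijection of a neighbourhood of $\infty$ onto a neighbourhood of $\infty$ (indeed $\hat x = q^{1/2}x + O(1/x)$ for large $|x|$, by \eqref{E:AW-hat} and the choice of branch $z = x+\sqrt{x^2-1}$), the value $f(\hat x) = a$ at the point $x$ precisely when $f = a$ at the point $\hat x$. So the counting function $n(r, f(\hat x)=a)$ counts $a$-points $w$ of $f$ lying in the region $\hat x(\{|x|<r\})$, which for large $r$ is approximately the disc of radius $|q^{1/2}|r$ centred at $0$, perturbed by a bounded amount near the boundary. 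First I would make this precise: there is a constant $K>0$ (depending only on $q$) such that
\begin{equation*}
   \{|w| < |q^{1/2}|r - K\} \subseteq \hat x(\{|x|<r\}) \subseteq \{|w| < |q^{1/2}|r + K\}
\end{equation*}
for all $r$ large, with a finite number of exceptional bounded $a$-points absorbed into the $O(\log r)$ term. This yields the two-sided pointwise bound
\begin{equation*}
   n\big(|q^{1/2}|r - K,\, f(x)=a\big) \le n\big(r,\, f(\hat x)=a\big) \le n\big(|q^{1/2}|r + K,\, f(x)=a\big) + O(1).
\end{equation*}

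Next I would integrate. Integrating $n(t, f(\hat x)=a)/t$ against $dt/t$ and changing variables, the bound above gives
\begin{equation*}
   N\big(r,\, f(\hat x)=a\big) = N\big(|q^{1/2}|r,\, f(x)=a\big) + \text{(error)},
\end{equation*}
where the error comes from (i) the $K$-shift in radius and (ii) the additive $O(1)$ in the counting function, the latter contributing $O(\log r)$ after integration. To control (i) and to replace $N(|q^{1/2}|r, f=a)$ by $N(r, f=a)$, I would invoke the standard fact that for a function of logarithmic order $\sigma_{\log}$ one has $N(\lambda r, f=a) - N(r, f=a) = O((\log r)^{\sigma_{\log}-1+\varepsilon})$ for any fixed $\lambda>0$ and any $\varepsilon>0$; this follows from $N(\lambda r) - N(r) \le n(\lambda r)\log\lambda$ together with the bound $n(R,f=a) = O((\log R)^{\sigma_{\log}-1+\varepsilon})$ already derived in the proof of Theorem~\ref{T:pointwise} (the inequality $n(R,f)\log R \le N(R^2,f)$ and \eqref{E:growth-rate-2}). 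Combining these gives \eqref{E:counting-est-1}. The estimate \eqref{E:counting-est-2} for $\check x = q^{-1/2}x + O(1/x)$ is identical with $q$ replaced by $q^{-1}$, and \eqref{E:counting-est-3} for $\hat{\hat x} = qx + O(1/x)$ follows either by the same argument with dilation factor $|q|$, or by applying \eqref{E:counting-est-1} twice (noting $\widehat{\hat x}$ is again analytic and invertible near $\infty$).

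The main obstacle, and the step requiring genuine care rather than routine estimation, is the boundary analysis behind the region inclusion: $\hat x$ maps $\{|x|<r\}$ not to a disc but to a slightly distorted domain, and near $|x|=r$ an $a$-point of $f$ could be counted for $f(\hat x)=a$ but just miss being counted for $f(x)=a$ at the comparison radius, or vice versa. Controlling the number of such boundary $a$-points is exactly where one needs the logarithmic-order growth hypothesis: the discrepancy is at most $n\big(|q^{1/2}|r + K, f=a\big) - n\big(|q^{1/2}|r - K, f=a\big)$, which one bounds by passing to a slightly larger radius and using $n(R,f=a) = O((\log R)^{\sigma_{\log}-1+\varepsilon})$ again, together with the convexity of $N$ in $\log r$ to convert a difference of $n$'s over a bounded radial window into the stated error term. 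One must also handle the finitely many $a$-points of $f$ near the origin (where $\hat x$ need not be injective, since $x\not=\pm 1$ is required in \eqref{E:AW-operator}), but these contribute only $O(1)$ to the counting functions and hence $O(\log r)$ after integration, which is already present in the claimed bound.
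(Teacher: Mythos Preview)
Your proposal is correct and reaches the result by the same mechanism as the paper --- the key input in both cases is the bound $n(R,\,f=a)=O\big((\log R)^{\sigma_{\log}-1+\varepsilon}\big)$ --- but the execution is organised differently. The paper does not sandwich $n(r,\,f(\hat x)=a)$ between $n$ at radii $|q^{1/2}|r\pm K$ and then integrate; instead it writes both integrated counting functions directly as finite sums, $N(r,\,f=a)=\sum_{|a_\mu|<r}\log(r/|a_\mu|)$ and $N(r,\,f(\hat x)=a)=\sum_{|\check a_\mu|<r}\log(r/|\check a_\mu|)+O(\log r)$ (using that for large $|x|$ the preimages of $a$-points under $\hat x$ are the $\check a_\mu$), splits the difference into three sums according to which of $|a_\mu|$, $|\check a_\mu|$ lies below $r$, and bounds each summand by a fixed constant $c\approx |\log|q^{-1/2}||$ via the elementary inequality of Lemma~\ref{L:lemma-2}. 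This yields $|N(r,f=a)-N(r,f(\hat x)=a)|\le c\cdot\big(n(r,f=a)+n(2|q^{1/2}|r,f=a)\big)+O(\log r)$ directly, without passing through an intermediate radius $|q^{1/2}|r$ or a change of variable in the $N$-integral. Your geometric route via region inclusions is equally valid and perhaps more transparent about \emph{why} the error is small; the paper's sum-by-sum comparison is slightly shorter because it avoids the change-of-variable step and handles the boundary $a$-points (your ``main obstacle'') automatically as the cross-sums where exactly one of $|a_\mu|<r$, $|\check a_\mu|<r$ holds.
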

%\smallskip
\begin{proof} We shall only prove the (\ref{E:counting-est-1}) since the (\ref{E:counting-est-2}) and  (\ref{E:counting-est-3}) can be proved similarly. Let $(a_\mu)_{\mu\in\mathbb{N}}$ be a sequence of $a-$points of $f$, counting multiplicities.

Recall that for $|x|$ and hence $|z|$ to be sufficiently large we have $
	\hat{\check{x}}=\check{\hat{x}}=x$. Therefore, there exists a sufficiently large $M>1$ and for $r\ge M$,
	\[
%\label{E:diff-N-growth-1}
\begin{split}
    &N\big(r,\, f(\hat{x})=a\big)
    =\int_0^r\frac{n\big(t,\, f(\hat{x})=a\big)-n\big(0,\, f(\hat{x})=a\big)}{t}\, dt+n\big(0,\, f(\hat{x})=a\big)\log r\\  
     =&\int_0^M\frac{n\big(t,\, f(\hat{x})=a\big)-n\big(0,\, f(\hat{x})=a\big)}{t}\, dt+\int_M
^r\frac{n\big(t,\, f(\hat{x})=a\big)-n\big(0,\, f(\hat{x})=a\big)}{t}\, dt\\
&+n\big(0,\, f(\hat{x})=a\big)\log r\\  
     =&\int_M
^r\frac{n\big(t,\, f(\hat{x})=a\big)}{t}\, dt+O(\log r)\\  
      =&\int_M
^r\frac{n\big(t,\, f(\hat{x})=a\big)-n\big(M,\, f(\hat{x})=a\big)}{t}\, dt+O(\log r)\\  
    =&\sum_{M\le|\check{a}_{\mu}|<r}\log\frac{r}{|\check{a}_{\mu}|}+O(\log r)
\end{split}
	\]
by the definition (\ref{E:counting-fn}). Then
\be
\label{E:diff-N-growth-1}
\begin{split}
    &\big|N\big(r,\, f(x)=a\big)-N\big(r,\, f(\hat{x})=a\big)\big|\\
    &=\bigg|\sum_{0<|a_{\mu}|<r}\log\frac{r}{|a_{\mu}|}+n(0,
    f(x)=a)\cdot\log r-\sum_{M\le|\check{a}_{\mu}|<r}\log\frac{r}{|\check{a}_{\mu}|}+O(\log r)\bigg|\\
      &=\bigg|\sum_{M\le|a_{\mu}|<r}\log\frac{r}{|a_{\mu}|}-\sum_{M\le|\check{a}_{\mu}|<r}\log\frac{r}{|\check{a}_{\mu}|}+O(\log r)\bigg|\\
    &\leq\Bigg|\sum_{
    \substack { M\le|a_{\mu}|<r,\\
   M\le|\check{a}_{\mu}|<r }}
    \left(\log\frac{r}{|\check{a}_{\mu}|}-\log\frac{r}{|a_{\mu}|}\right)\Bigg|
    +\sum_{\substack{
    M\le|\check{a}_{\mu}|<r,\\ |a_{\mu}|\geq r\ \text{or}\ |a_{\mu}|<M
    }} \log\frac{r}{|\check{a}_{\mu}|}\\
    &\quad\quad +\sum_{
    \substack{
   M\le|a_{\mu}|<r,\\
     |\check{a}_{\mu}|\geq r\ \text{or}\ |\check{a}_{\mu}|<M
    }} \log\frac{r}{|a_{\mu}|}+O(\log r)\\
    &\leq \sum_{
    \substack{
    M\le|\check{a}_{\mu}|<r,\\ M\le|a_{\mu}|<r}}
    \bigg|\log\bigg|\frac{a_{\mu}}{\check{a}_{\mu}}\bigg|\bigg|
    +\sum_{
    \substack{
    M\le|\check{a}_{\mu}|<r,\\ |a_{\mu}|\geq r}}
    \log\frac{r}{|\check{a}_{\mu}|}
    +\sum_{
    \substack{
    M\le|a_{\mu}|<r,\\ |\check{a}_{\mu}|\geq r}}
    \log\frac{r}{|a_{\mu}|}+
    O(\log r).
\end{split}
\ee
 Let us write
	\begin{equation}
		\label{E:reminder-1}
			\check{x}=q^{-1/2}x+\eta(x),
	\end{equation}
where
	\begin{equation}
		\eta(x)=\frac{q^{1/2}-q^{-1/2}}{2(x+\sqrt{x^2-1})}
	\end{equation}
which clearly tends to zero as $x\to\infty$. Thus, there exists a constant $h>0$ such that 
\be
\label{E:reminder-1-estimate}
	|\eta(x)|\le h
\ee
for all sufficiently large $|x|$. Thus, it follows from Lemma \ref{L:lemma-2} with $\alpha=1$, (\ref{E:reminder-1}) and  (\ref{E:reminder-1-estimate}) that for $M\le|\check{a}_{\mu}|,\ M\le|a_{\mu}|$,
\be
\begin{split}
	\bigg|\log\bigg|\frac{a_{\mu}}{\check{a}_{\mu}}\bigg|\bigg|
	&=\bigg|\log\bigg|\frac{q^{-1/2}a_\mu+\eta(a_{\mu})}{{a}_{\mu}}\bigg|\bigg|=
	\bigg|\log\bigg|\frac{q^{-1/2}a_\mu}{{a}_{\mu}}\bigg|+\log
	\bigg|\frac{q^{-1/2}a_\mu+\eta(a_{\mu})}{q^{-1/2}{a}_{\mu}}\bigg|\bigg|\\
	&\le|\log|q^{-1/2}||+ \bigg|\frac{\eta(a_{\mu})}{q^{-1/2}{a}_{\mu}}\bigg|+\bigg|\frac{\eta(a_{\mu})}{q^{-1/2}{a}_{\mu}+\eta(a_{\mu})}\bigg|\\
	&\le |\log |q^{-1/2}||+ \frac{h}{|q^{-1/2}||a_\mu|}+\frac{h}{|\check{a}_\mu|}.
\end{split}
\ee
Let
\be
	c= |\log |q^{-1/2}||+|\log |q^{1/2}||+\frac{h|q^{1/2}|}{M}+\frac{h|q^{-1/2}|}{M}+\frac{h}{M}.
\ee
Then,
\be
\label{E:part-I}
	\sum_{
    \substack{
    M\le|\check{a}_{\mu}|<r,\\ M\le|a_{\mu}|<r}}
    \bigg|\log\bigg|\frac{a_{\mu}}{\check{a}_{\mu}}\bigg|\bigg|
\le c\cdot\bigg(\sum_{
    \substack{
    M\le|\check{a}_{\mu}|<r,\\ M\le|a_{\mu}|< r}}1\bigg)
\end{equation}

 Similarly we have 
	\begin{equation}
		\label{E:part-II}
			\sum_{
				\substack{
				M\le|\check{a}_{\mu}|<r,\\ |a_{\mu}|\geq r}}
				\log\frac{r}{|\check{a}_{\mu}|}
				\le \sum_{
				\substack{
				M\le|\check{a}_{\mu}|<r,\\ |a_{\mu}|\geq r}}
				\log\frac{|a_{\mu}|}{|\check{a}_{\mu}|}
				\le c\,\bigg(\sum_{
				\substack{
				M\le|\check{a}_{\mu}|<r,\\ |a_{\mu}|\geq r}}1\bigg).
	\end{equation}
and
	\begin{equation}
		\label{E:part-III}
			\sum_{
			\substack{
			M\le|a_{\mu}|<r,\\ |\check{a}_{\mu}|\geq r}}
			\log\frac{r}{|a_{\mu}|}
			\le
			c\,\bigg(\sum_{
			\substack{
			M\le|a_{\mu}|<r,\\ |\check{a}_{\mu}|\geq r}} 1\bigg).
\end{equation}
Combining the (\ref{E:diff-N-growth-1}), (\ref{E:part-I}), (\ref{E:part-II}) and (\ref{E:part-III}) yields
\be
\label{E:diff-N-growth-2}
    \big|N\big(r,\, f(x)=a\big)-N\big(r,\, f(\hat{x})=a\big)\big|\le
    c\,\bigg(\sum_{M\le|{a}_{\mu}|<r} 1
    +\sum_{M\le|\check{a}_{\mu}|<r} 1 \bigg)+O(\log r).
\ee
For $M\le|\check{a}_{\mu}|<r $ and $r$ large enough and taking into account of the (\ref{E:reminder-1-estimate}),
\be
	|a_\mu|\le
	|q^{1/2}\check{a}_\mu|+|\eta(\check{a}_\mu)|\leq 2|q^{1/2}|r.
\ee
This together with (\ref{E:diff-N-growth-2}) and an inequality similar to  (\ref{E:n-upper}) imply that, for every $\varepsilon>0$,  we have
\be
\begin{split}
	\big|N\big(r,\, f(x)&=a\big)-N\big(r,\, f(\hat{x})=a\big)\big|\\
	&\le
	 c\,\bigg(\sum_{|{a}_{\mu}|<r} 1
    +\sum_{|{a}_{\mu}|<2|q^{1/2}|r} 1 \bigg)+O(\log r)\\
    &=c\Big[\, n\big(r,\, f(x)=a\big)+ n\big(2|q^{1/2}|r,\, f(x)=a\big)\Big]+O(\log r)\\
    &=O\big((\log r)\big)^{\sigma_{\log}-1+\varepsilon}+O(\log r).
\end{split}
\ee
This completes the proof.
\end{proof}

The above estimate should be compared with the estimate of $N(r,\, f(x+\eta))=N(r,\, f(x))+O(r^{\sigma-1+\varepsilon})$ obtained by the authors in \cite[Theorem 2.2]{Chiang:Feng2008} for a meromorphic function of finite order $\sigma$, where $\eta$ is a fixed, though arbitrary non-zero, complex number.

\section{\textrm{Proof of the Second Main theorem \ref{T:2nd-Main-1}}}\label{S:2nd-proof}
We shall follow Nevanlinna's argument\footnote{According to 
 \cite[pp. 238--240]{Nev70} Nevanlinna proved the original version of (\ref{E:2nd-main-1}) for $p=3$ in 1923 and the general case for $p>3$ for entire functions was due to Collingwood in 1924 \cite{Col1ingwood1924}.}
 by replacing the  $f^\prime(x)$ by the $\mathrm{AW-}$operator $\mathcal{D}_qf$ \cite[pp. 238--240]{Nev70}. The methods used in \cite{HK-4} and \cite{HK-3} were based on 
Mohon'ko's theorem (see \cite[p. 29]{Laine1993}).
 We let 
\begin{equation}
\label{E:universal-fn}
	F(x):=\sum_{\nu=1}^p\frac{1}{f(x)-A_\nu}.
\end{equation}
 We deduce from \cite[p. 5]{Hayman1964} that
\begin{equation}
\label{E:m on f}
	m(r,\,F)=m\Big(r,\, F\mathcal{D}_qf\cdot\frac{1}{\mathcal{D}_qf}\Big)
	\le m\Big(r,\, \frac{1}{\mathcal{D}_qf}\Big)+
	m\Big(r,\,\sum_{\nu=1}^p\frac{\mathcal{D}_qf}{f-A_\nu}\Big).
\end{equation}
 On the other hand, for a given $\mu$ amongst $\{1,\,\cdots, p\}$ we write $F$ in the for
\begin{equation}
\label{E:universal-fn-2}
	F(x):=\frac{1}{f(x)-A_\mu}\Big(1+\sum_{
	\substack{\nu=1\\
			\nu\not=\mu}}^p\frac{f(x)-A_\mu}{f(x)-A_\nu}\Big).
\end{equation}
Let $\delta=\min\{|A_h-A_k|,\, 1\}$ whenever $h\not=k$. We follow the argument used by Nevanlinna \cite[pp. 238--240]{Nev70} to arrive at the inequality
\begin{equation}
	m(r,\, F) >\sum_{\mu=1}^p m\big(r,\, A_\mu\big)
	-p\log\frac{2p}{\delta}-\log 3.\notag
\end{equation}
Combining this inequality with (\ref{E:m on f}) yields
\begin{equation}
		m\Big(r,\, \frac{1}{\mathcal{D}_qf}\Big)>
		\sum_{\mu=1}^p m\big(r,\, A_\mu\big)-
		m\Big(r,\,\sum_{\mu=1}^p\frac{\mathcal{D}_qf}{f-A_\mu}\Big)
		-p\log\frac{2p}{\delta}-\log 3.
\end{equation}
 Let us now add $N\big(r,\,1/\mathcal{D}_qf\big)$ on both sides of this inequality and utilizing the first main theorem \cite{Nevanlinna1925} (see also \cite{Hayman1964} and \cite{Nev70}), we deduce
\begin{align}
\label{E:upper-T}
	T(r,\,\mathcal{D}_qf) &=T\Big(r,\,\frac{1}{\mathcal{D}_qf}\Big)+O(1)=m\Big(r,\,\frac{1}{\mathcal{D}_qf}\Big)+N\Big(r,\,\frac{1}{\mathcal{D}_qf}\Big)+O(1)\\
	&>N\Big(r,\,\frac{1}{\mathcal{D}_qf}\Big)+\sum_{\nu=1}^p m\big(r,\, A_\nu\big)-
		m\Big(r,\,\sum_{\nu=1}^p\frac{\mathcal{D}_qf}{f-A_\nu}\Big)
		+O(1).\notag
\end{align}
But is it elementary that
\begin{align}
\label{E:lower-T}
	T(r,\,\mathcal{D}_qf)&=m(r,\,\mathcal{D}_qf)+N(r,\,\mathcal{D}_qf)\\
	&\le N(r,\,\mathcal{D}_qf)+m\big(r,\,{\mathcal{D}_qf}/{f}\big)+m(r,\, f).\notag
\end{align}
Eliminating the $T(r,\,\mathcal{D}_qf)$ from the inequalities (\ref{E:lower-T}) and (\ref{E:upper-T}), adding $m(r,\, f)$ on both sides of the combined inequalities and rearranging the terms yield
\begin{align}
\label{E:final-steps}
		m(r,\, f)+\sum_{\nu=1}^pm(r,\, A_\nu) &\le 2T(r,\,f)
		-\Big(2N(r,\, f)-N(r,\,\mathcal{D}_qf)+N\Big(r,\,\frac{1}{\mathcal{D}_qf}\Big)\Big)\\
		&\quad +m\Big(r,\, \frac{\mathcal{D}_qf}{f}\Big)+
	m\Big(r,\,\sum_{\nu=1}^p\frac{\mathcal{D}_qf}{f-A_\nu}\Big)
	+O(1).\notag
\end{align}
The inequality (\ref{E:2nd-main-1}) now follows by noting the 
\[
	m\Big(r,\,\sum_{\nu=1}^p\frac{\mathcal{D}_qf}{f-A_\nu}\Big)=
	m\Big(r,\,\sum_{\nu=1}^p\frac{(\mathcal{D}_q)(f-A_\nu)}{f-A_\nu}\Big)\le \sum_{\nu=1}^pm\Big(r,\,\frac{(\mathcal{D}_q)(f-A_\nu)}{f-A_\nu}\Big),
\]
%\smallskip

\noindent Theorem \ref{T:log-lemma} and the (\ref{E:remainder}).
\qed
\bigskip
\section{\textrm{Askey-Wilson type Second Main theorem -- Part II: Truncations}}
\label{S:main-results-II}

We recall that in classical Nevanlinna theory, for each element $a$, the counting function $\bar{n}\big(r,\, \frac{1}{f-a}\big)$  counts distinct $a-$points for a meromorphic function $f$ in $\mathbb{C}$ can be written as a sum of integers ``$h-k$" summing over all the points $x$ in $\{|x|<r\}$ at which $f(x)=a$ with multiplicity ``$h$", and where ``$k\, (=h-1)$" is the multiplicity of $f^\prime(x)=0$ where $f(x)=a$. We  define an Askey-Wilson analogue of the $\bar{n}(r,\, f)$. We define the \textit{Askey-Wilson-type counting function} of $f$

\begin{equation}
\label{E:counting}
	\awnf{r}{f=a}=\awnf{r}{\frac{1}{f-a}}
\end{equation}
to be the sum of integers of the form ``$h-k$" summing over all the points $x$ in $\{|x|<r\}$ at which $f(x)=a$ with multiplicity ``$h$", while the $k$ is defined by $k:=\min\{h,\, k^{\prime}\}$ and where ``$k^{\prime}\,$" is the multiplicity of $\mathcal{D}_qf(\hat{x})=0$ at $\hat{x}$. 
%That is, we count ``$h-k$" where $h$ is the multiplicity of the $a-$point of $f(x)$, and $k$ is the multiplicity of the $0-$point of $\mathcal{D}_qf(\hat{x})$. 
%Thus, if $a=0$, then the sum ``$h-k$" is over all $x$ with $|x|<r$ at which $f(x)=0$ with multiplicity ``$h$", and then ``$k$" gives the the multiplicities of the zero of $\mathcal{D}_qf(\hat{x})$ at $x$, respectively. 
Similarly, we define 
\begin{equation}
\label{E:counting-poles}
	\awnf{r}{f}=\awnf{r}{f=\infty}=\awnf{r}{\frac{1}{f}=0}
\end{equation}
to be the sum of integers ``$h-k$", summing over all $x$ in $|x|<r$ at which $(1/f)(x)=0$ with multiplicity ``$h$",  $k:=\min\{h,k^{\prime}\}$ and where ``$k^{\prime}\,$" is the multiplicity of $\mathcal{D}_q(1/f)(\hat{x})=0$ at $\hat{x}$.

% is the multiplicity of the zeros of $\big(1/f\big)(x)=0$ , and $k$ is the multiplicity of zeros of $\mathcal{D}_q\big(1/f\big)(\hat{x})=0$. 

We define the \textit{Askey-Wilson-type integrated counting function} of $f(x)$ by
\begin{equation}
\label{E:int-counting}
		\begin{split}
	\awNf{r}{f=a}=\awNf{r}{\frac{1}{f-a}} &=\int_0^r\frac{\awnf{t}{f=a}-\awnf{0}{f=a}}{t}\, dt\\
			&\qquad +\awnf{0}{f=a}\log r,
		\end{split}
\end{equation}
and
\begin{equation}
\label{E:int-counting-poles}	
	\awNf{r}{f}=\int_0^r\frac{\awnf{t}{f}-\awnf{0}{f}}{t}\, dt
	+\awnf{0}{f}\log r.
\end{equation}
The (\ref{E:int-counting}) and (\ref{E:int-counting-poles}) are respectively the analogues for the $\bar{N}(r,f=a)$ and $\bar{N}(r,f)$ from the classical Nevanlinna theory.

%\section{Interpretation of the term $N_{\mathrm{AW}}(r)$}

We are now ready to state an alternative Second Main Theorem in terms of the $\mathrm{AW-}$type integrated counting function defined above. The theorem could be regarded as a \textit{truncated form} of the original Second Main Theorem, the Theorem \ref{T:2nd-Main-1}.

\begin{theorem}\label{T:2nd-Main-2} Suppose that $f(z)$ is a non-constant meromorphic function of finite logarithmic order $\sigma_{\log}(f)$ as defined in (\ref{E:growth-rate-2}) such that $\mathcal{D}_qf\not\equiv 0$, and let $a_1,\, a_2,\cdots, a_p$ where $p\ge 2$, be mutually distinct elements in $\mathbb{C}$. Then we have, for $r< R$ and for every $\varepsilon>0$,
\begin{equation}
\label{E:2nd-main-2}
	\big(p-1\big) T(r,\, f)\le  \awNf{r}{f}+\sum_{\nu=1}^p \awNf{r}{f=a_\nu} +S_{\log}(r,\, \varepsilon;\, f)
\end{equation}
where
$S_{\log}(r,\, \varepsilon;\, f)=O\big((\log r)^{\sigma_{\log}-1+\varepsilon}\big)+O\big(\log r\big)$ holds for all $|x|=r$ sufficiently large, where $\awNf{r}{f=a_\nu}$ and $\awNf{r}{f}$ are defined by (\ref{E:int-counting}) and (\ref{E:int-counting-poles}), respectively.
\end{theorem}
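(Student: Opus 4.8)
The plan is to deduce the truncated second main theorem from the already-established Theorem \ref{T:2nd-Main-1} by carefully analyzing the correction term $\mathfrak{N}_{\mathrm{AW}}(r,\,f) = 2N(r,\,f) - N(r,\,\mathcal{D}_qf) + N(r,\,1/\mathcal{D}_qf)$ and showing it bounds from below the difference between the ordinary counting functions $\bar N$ and their Askey--Wilson truncated analogues $\widetilde N_{\mathrm{AW}}$. First, I would rewrite the conclusion of Theorem \ref{T:2nd-Main-1} by converting the proximity terms to counting terms: using the first fundamental theorem $T(r,\,A_\nu) = m(r,\,A_\nu) + N(r,\,A_\nu) + O(1) = T(r,\,f) + O(1)$, the left side $m(r,\,f) + \sum_\nu m(r,\,A_\nu)$ equals $(p+1)T(r,\,f) - N(r,\,f) - \sum_\nu N(r,\,A_\nu) + O(1)$, so (\ref{E:2nd-main-1}) becomes
\begin{equation}
(p-1)T(r,\,f) \le N(r,\,f) + \sum_{\nu=1}^p N(r,\,A_\nu) - \mathfrak{N}_{\mathrm{AW}}(r,\,f) + S_{\log}(r,\,\varepsilon;\,f).\notag
\end{equation}

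The core of the argument is then the combinatorial/local inequality
\begin{equation}
N(r,\,f) + \sum_{\nu=1}^p N(r,\,A_\nu) - \mathfrak{N}_{\mathrm{AW}}(r,\,f) \le \widetilde N_{\mathrm{AW}}(r,\,f) + \sum_{\nu=1}^p \widetilde N_{\mathrm{AW}}(r,\,f=a_\nu) + S_{\log}(r,\,\varepsilon;\,f).\notag
\end{equation}
To prove this I would work pointwise at each zero/pole/$a_\nu$-point. The subtle point is that $\mathfrak{N}_{\mathrm{AW}}$ involves $N(r,\,\mathcal{D}_qf)$ and $N(r,\,1/\mathcal{D}_qf)$ evaluated at the original point $x$, whereas the truncation defect ``$k$'' in $\widetilde n_{\mathrm{AW}}$ is governed by the vanishing order of $\mathcal{D}_qf$ at the \emph{shifted} point $\hat x$ (equivalently, of $\breve f(q^{1/2}e^{i\theta}) - \breve f(q^{-1/2}e^{i\theta})$). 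So I would first use Theorem \ref{T:counting-est} (the three relations (\ref{E:counting-est-1})--(\ref{E:counting-est-3})) to replace, up to an error of size $S_{\log}(r,\,\varepsilon;\,f)$, every occurrence of $N(r,\,\mathcal{D}_qf(x)=b)$ by the corresponding quantity where the argument is the shifted variable, thereby aligning the two bookkeeping schemes. Once on the same footing, at a point where $f$ has an $a_\nu$-point of multiplicity $h$, the numerator $f(\hat x) - f(\check x)$ vanishes to order $k' = \min$ of the relevant local orders, and one checks by a direct case analysis (separating $a_\nu$ finite from $a_\nu = \infty$, and tracking the simple pole/zero of the denominator $\hat x - \check x$) that $h - k = h - \min\{h,\,k'\}$ is exactly what survives after subtracting the $\mathcal{D}_qf$ counting contributions; the zeros of $\mathcal{D}_qf$ that are \emph{not} at preimages of the $a_\nu$ or $\infty$ contribute a nonnegative quantity which we simply discard, and this is where the inequality (rather than equality) enters.

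The main obstacle I anticipate is the local multiplicity analysis at poles and at the branch-type points: since $\hat x,\check x$ are only analytic and invertible for $|x|$ large, one must confine the pointwise identities to $\{|x| > M\}$ and absorb the bounded remaining region into the $O(\log r)$ term, and one must be careful that ``multiplicity of $\mathcal{D}_qf$ at $\hat x$'' is interpreted via the $g(x) = \hat x$ counting convention of \S\ref{S:proof_counting} rather than naively. A secondary technical point is that $\mathcal{D}_qf$ can fail to be $\not\equiv 0$ only in the kernel case, which is excluded by hypothesis, and that $N(r,\,\mathcal{D}_qf) \le 2N(r,\,f) + S_{\log}$ from Theorem \ref{T:integrated-counting} guarantees $\mathfrak{N}_{\mathrm{AW}}(r,\,f) \ge N(r,\,1/\mathcal{D}_qf) - S_{\log} \ge -S_{\log}$, so all the error terms genuinely have the claimed form. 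Assembling the displayed inequalities and renaming $(p-1)$ constant error contributions into the $o(1)T(r,\,f)$ absorbs them (using $T(r,\,f) \to \infty$ since $f$ is non-constant of logarithmic order $\ge 1$), which yields (\ref{E:2nd-main-2}).
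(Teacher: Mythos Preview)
Your proposal is correct and follows essentially the same route as the paper: rewrite Theorem~\ref{T:2nd-Main-1} in terms of counting functions, then show that the discrepancy $\big(N(r,f)-\awNf{r}{f}\big)+\sum_\nu\big(N(r,f=a_\nu)-\awNf{r}{f=a_\nu}\big)$ is bounded by $\mathfrak{N}_{\mathrm{AW}}(r,f)$ up to $S_{\log}$, using Theorem~\ref{T:counting-est} to pass between shifted and unshifted arguments. The paper carries out the pole case via the explicit identity $(\mathcal{D}_q\,1/f)(\hat{x})=-(\mathcal{D}_qf)(\hat{x})\big/\big(f(x)\,f(\hat{\hat{x}})\big)$ rather than a generic case analysis, and note that the relevant numerator for $(\mathcal{D}_qf)(\hat{x})$ is $f(\hat{\hat{x}})-f(x)$, not $f(\hat{x})-f(\check{x})$, but otherwise your outline matches.
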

%\bigskip

This truncated form of the Second Main Theorem leads to new interpretation of Nevanlinna's original defect relation, deficiency, etc, and perhaps the most important of all, is a new type of Picard theorem gears toward the Askey-Wilson operator. We will discuss the functions that lie in the kernel of the $\mathrm{AW-}$operator in \S\ref{S:kernel}. We note that Halburd and Korhonen \cite{HK-2} was the first to give such a truncated form of a Second Main theorem for the difference operator $\Delta f(x)=f(x+\eta)-f(x)$. However, both the formulation of our counting functions $\tilde{N}_{\mathrm{AW}}(r)$ and the method of proof differs greatly from their original argument.
%\bigskip

%\section{\textrm{Proof of Theorem \ref{T:2nd-Main-2}}}
%\label{S:Proof-trunction}

\subsubsection*{Proof of the Theorem \ref{T:2nd-Main-2}}
We are ready to prove the Theorem \ref{T:2nd-Main-2}. 
%\smallskip
%\noindent 
Adding the sum
\be
\label{E:Nev-sum}
	N(r,\, f)+\sum_{\nu=1}^p N(r,\, f=a_\nu)
\ee
on both sides of (\ref{E:2nd-main-1}) and rearranging the terms yields
%\smallskip
\begin{equation}
\label{E:Nev-intermediate-sum}
	\begin{split}
	\big(p- & 1\big)\, T(r,\, f)\\
	& \le N(r,\, f)+  \sum_{\nu=1}^p N(r,\, f=a_\nu)-\mathfrak{N}_{\mathrm{AW}}(r,f)
	+O\big((\log r)^{\sigma_{\log}-1+\varepsilon}\big)
	\end{split}
\end{equation}
%\smallskip
holds for all $|x|=r>0$.
It remains to compare the sizes of (\ref{E:Nev-sum}) and
%\smallskip
\begin{equation}
\label{E:Nev-sum-2}
	\awNf{r}{f}+\sum_{\nu=1}^p \awNf{r}{f=a_\nu}.
\end{equation}
%\smallskip
%\noindent 
Subtracting (\ref{E:Nev-sum-2}) from (\ref{E:Nev-sum}) yields
%\smallskip
\begin{equation}
\label{E:first-differece}
	\big(N(r,\, f)-\awNf{r}{f}\big)+\sum_{\nu=1}^p \big(N(r,\, f=a_\nu)-\awNf{r}{f=a_\nu}\big).
\end{equation}
%\smallskip
%\noindent 
It follows from the definitions of $\awnf{r}{f}$ and $\awnf{r}{f=a_\nu}$ that the difference $n(r,\,f)-\awnf{r}{f}$ enumerates the number of zeros of $\mathcal{D}_q(1/f)(\hat{x})$ at which $(1/f)(x)$ has a zero in the disk $|x|<r$, with due count of multiplicities, while the difference $n(r,\,f=a_\nu)-\awnf{r}{f=a_\nu}$ enumerates the number of zeros of $\mathcal{D}_qf(\hat{x})$ in the disk $|x|<r$ at which $f(x)=a_\nu$, with due count of multiplicities, and those points $x$ in $|x|<r$ that arise from the common $a_\nu-$points of $f(\hat{x})$ and $f(\check{x})$ , respectively. 
%\smallskip
We have 
%\smallskip
\begin{equation}
\label{E:der-1/f}
	\Big(\mathcal{D}_q\frac{1}{f}\Big)(\hat{x})
	=\frac{f(x)-f\big(\hat{\hat{x}})}{(\hat{\hat{x}}-{x})\,
	f(x)\,f\big(\hat{\hat{x}}\big)}=\frac{-(\mathcal{D}_qf)(\hat{x})}{f(x)\,f\big(\hat{\hat{x}}\big)}.
\end{equation}
%\smallskip
Recall that the maps $\hat{x}$ and $\check{x}$ are analytic and invertible ($\label{}
	\hat{\check{x}}=\check{\hat{x}}=x$) when $|x|$ is sufficiently large.
 It follows from (\ref{E:der-1/f}) that the zeros of $(\mathcal{D}_q1/f)(\hat{x})$ originate
 from the poles of $f(x)$, $f\big(\hat{\hat{x}}\big)$ or from the zeros of $(\mathcal{D}_qf)(\hat{x})$. On the other hand, the poles of $(\mathcal{D}_qf)(\hat{x})$ must be amongst the poles of $f(x)$ and/or poles of $f\big(\hat{\hat{x}}\big)$, and  in this case, the multiplicity of zeros of $(\mathcal{D}_q1/f)(\hat{x})$, which is non-negative,  equals to subtracting the multiplicity of poles of $(\mathcal{D}_qf)(\hat{x})$ from the sum of multiplicities of the poles of $f(x)$ and the poles of $f\big(\hat{\hat{x}}\big)$. It follows from this consideration and the definitions of (\ref{E:counting}) and (\ref{E:counting-poles}) that
%\smallskip
%However,  each pole of $\mathcal{D}_qf(\hat{x})$ must either come from a pole of $f(x)$ or from $f\big(\hat{\hat{x}}\big)$ or from both. 
%These poles contributions needs to be removed from the above counting since they are not from  the zeros of $\mathcal{D}_q1/f$. 
\begin{equation}
\label{E:second-difference}
\begin{split}
	&\big(N(r,\, f)-\awNf{r}{f}\big)+\sum_{\nu=1}^p \big(N(r,\, f=a_\nu)-\awNf{r}{f=a_\nu}\big)\\
	&\le N(r,\,f(x))+N\big(r,\,f\big(\hat{\hat{x}}\big)\big)+N\Big(r,\, \frac{1}{\mathcal{D}_qf(\hat{x})}\Big)-N(r,\, \mathcal{D}_qf(\hat{x}))
\end{split}
\end{equation}
%\smallskip
%\noindent
holds. We deduce from Theorem \ref{T:counting-est} and Theorem \ref{T:simple-order} that the followings 
%\smallskip
\begin{equation}
\label{E:shift-est-1}
	N\big(r,\, f(\hat{\hat{x}})\big)= N\big(r,\, f(x)\big)+
    O\big((\log r)^{\sigma_{\log}-1+\varepsilon}\big)+O(\log r);
\end{equation}
%\smallskip
\begin{equation}
\label{E:shift-est-2}
	N\big(r,\, \mathcal{D}_qf(\hat{x})\big)= N\big(r,\, \mathcal{D}_qf(x)\big)+
    O\big((\log r)^{\sigma_{\log}-1+\varepsilon}\big)+O(\log r);
\end{equation}
and
\begin{equation}
\label{E:shift-est-3}
	N\Big(r,\, \frac{1}{\mathcal{D}_qf(\hat{x})}\Big)= N\Big(r,\, \frac{1}{\mathcal{D}_qf(x)}\Big)+
    O\big((\log r)^{\sigma_{\log}-1+\varepsilon}\big)+O(\log r)
\end{equation}
%\smallskip
%\noindent 
hold. Substituting the (\ref{E:shift-est-1}), (\ref{E:shift-est-2}) and (\ref{E:shift-est-3}) into (\ref{E:second-difference}) yields
%\smallskip
\begin{equation}
\label{E:third-difference}
\begin{split}
	&\big(N(r,\, f)-\awNf{r}{f}\big)+\sum_{\nu=1}^p \big(N(r,\, f=a_\nu)-\awNf{r}{f=a_\nu}\big)\\
	&\le 2N(r,\,f(x))+N\Big(r,\, \frac{1}{\mathcal{D}_qf(x)}\Big)-N(r,\, \mathcal{D}_qf(x))+O\big((\log r)^{\sigma_{\log}-1+\varepsilon}\big)+O(\log r)\\
	&=\mathfrak{N}_{\mathrm{AW}}(r,f)+O\big((\log r)^{\sigma_{\log}-1+\varepsilon}\big)+O(\log r),
\end{split}
\end{equation}
%\smallskip
%\noindent 
where the $\mathfrak{N}_{\mathrm{AW}}(r,\, f)$ is given by (\ref{E:remainder}). 	
Combining the (\ref{E:Nev-intermediate-sum}) and (\ref{E:third-difference}) gives the desired inequality (\ref{E:2nd-main-2}).\qed

\section{Askey-Wilson-Type Nevanlinna Defect Relation}\label{S:deficient}
We recall Nevanlinna's original \textit{deficiency}, \textit{multiplicity index} and \textit{ramification index} are defined, respectively, by $\delta(a)=1-\varlimsup_{r\to\infty}{N(r,\,f=a)}/{T(r,\, f)}$, $\vartheta(a)=\vartheta(a,\, f)=\varliminf_{r\to\infty}({N(r,\,f=a)-\bar{N}(r,\,f=a)})/{T(r,\, f)}$
and $\Theta(a)=\Theta(a,\, f)=1-\varlimsup_{r\to\infty}{\bar{N}(r,\,f=a)}/{T(r,\, f)}$. Nevanlinna's second main theorem implies
%\smallskip
\be
	\sum_{a\in\widehat{\mathbb{C}}}\big(\delta(a)+\vartheta(a)\big)\le
	\sum_{a\in\widehat{\mathbb{C}}}\Theta(a)\le 2.
\ee
%\smallskip
%\noindent 
We define the
$\mathrm{AW-}$\textit{multiplicity index} and $\mathrm{AW-}$\textit{deficiency} by
\be
	\vartheta_{\mathrm{AW}}(a)=\vartheta_{\mathrm{AW}}(a,\, f)=\varliminf_{r\to\infty}\frac{N(r,\, f=a)-\awNf{r}{f=a}}{T(r,\, f)},
\ee
and
\be
	\Theta_{\mathrm{AW}}(a)=\Theta_{\mathrm{AW}}(a,\, f)=1-\varlimsup_{r\to\infty}\frac{\awNf{r}{f=a}}{T(r,\, f)}
\ee
respectively.
%\smallskip
It follows from the definition of $\awNf{r}{f=a}$ that we have the relationship
%\smallskip	
	\[
		0\le \vartheta_{\mathrm{AW}}(a,\, f)\le \Theta_{\mathrm{AW}}(a,\, f)\le 1.
	\]
%\smallskip
%\noindent 
Dividing the inequality (\ref{E:2nd-main-2}) in Theorem \ref{T:2nd-Main-2} by $T(r,\, f)$ and rearranging the terms yield
\be
	1-\frac{\awNf{r}{f=\infty}}{T(r,\, f)}+\sum_{\nu=1}^p\bigg(1-\frac{\awNf{r}{f=a_\nu}}{T(r,\, f)}\bigg)\le 2+\frac{S_{\log}(r,\,\varepsilon;\,f)}{T(r,\,f)}.
\ee
Taking limit infimum on both sides of the above inequality as $r\to +\infty$ yields the following theorem.\medskip

\begin{theorem}\label{T:2nd-Main-3} Suppose that $f(z)$ is a transcendental meromorphic function of finite logarithmic order, such that $\mathcal{D}_qf\not\equiv 0$. Then
\be
\label{E:2nd-main-3}
	\sum_{a\in\widehat{\mathbb{C}}}\big(\delta(a)+\vartheta_{\mathrm{AW}}(a)\big)\le
	\sum_{a\in\widehat{\mathbb{C}}}\Theta_{\mathrm{AW}}(a)\le 2.
\ee
\end{theorem}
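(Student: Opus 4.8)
**The plan is to derive Theorem \ref{T:2nd-Main-3} as an asymptotic consequence of the truncated Second Main Theorem, Theorem \ref{T:2nd-Main-2}.** The starting point is the inequality \eqref{E:2nd-main-2}, which is already stated for $p$ mutually distinct finite values $a_1,\dots,a_p$. The immediate issue is that \eqref{E:2nd-main-2} involves only finitely many target values, whereas the defect relation \eqref{E:2nd-main-3} ranges over all $a\in\widehat{\mathbb{C}}$. The standard device is to apply \eqref{E:2nd-main-2} with an arbitrary finite selection of values, divide through by $T(r,f)$, pass to the limit, and then observe that the resulting finite-sum bound is uniform in the selection, so the full (possibly infinite) sum is also bounded by the same constant.

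First I would fix any finite set $\{a_1,\dots,a_p\}\subset\mathbb{C}$ of distinct points (with $\infty$ treated via the pole term $\awNf{r}{f}$ already present in \eqref{E:2nd-main-2}, so effectively we are including $a=\infty$ as a permitted target). Dividing \eqref{E:2nd-main-2} by $T(r,f)$ gives
\begin{equation*}
	p-1+o(1)\le \frac{\awNf{r}{f}}{T(r,f)}+\sum_{\nu=1}^p\frac{\awNf{r}{f=a_\nu}}{T(r,f)}+\frac{S_{\log}(r,\varepsilon;f)}{T(r,f)}.
\end{equation*}
Since $f$ is transcendental of finite logarithmic order with $\sigma_{\log}\ge 1$, one has $T(r,f)/(\log r)^{\sigma_{\log}-1+\varepsilon}\to\infty$ along a sequence avoiding the exceptional set (this uses $\sigma_{\log}\ge 1$ and the definition \eqref{E:growth-rate-2}), so $S_{\log}(r,\varepsilon;f)/T(r,f)\to 0$; here one chooses $\varepsilon$ small enough and argues along an appropriate sequence $r_k\to\infty$ outside the finite-logarithmic-measure exceptional set. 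Rearranging and taking $\varlimsup_{r\to\infty}$ of each counting ratio, and using $\varlimsup(\sum)\le\sum\varlimsup$ on the right, yields $p-1\le \sum_{\nu=1}^p\varlimsup_r \awNf{r}{f=a_\nu}/T(r,f)+\varlimsup_r\awNf{r}{f}/T(r,f)$, which is exactly $\sum_{\nu=1}^p\Theta_{\mathrm{AW}}(a_\nu)+\Theta_{\mathrm{AW}}(\infty)\le 2$. Since the finite set was arbitrary, letting $p\to\infty$ through an exhaustion of $\widehat{\mathbb{C}}$ gives $\sum_{a\in\widehat{\mathbb{C}}}\Theta_{\mathrm{AW}}(a)\le 2$.

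The remaining inequality $\sum_a(\delta(a)+\vartheta_{\mathrm{AW}}(a))\le\sum_a\Theta_{\mathrm{AW}}(a)$ is elementary and does not require the Second Main Theorem: for each fixed $a$ I would write, using the first main theorem $N(r,f=a)\le T(r,f)+O(1)$,
\begin{equation*}
	\delta(a)+\vartheta_{\mathrm{AW}}(a)=1-\varlimsup_r\frac{N(r,f=a)}{T(r,f)}+\varliminf_r\frac{N(r,f=a)-\awNf{r}{f=a}}{T(r,f)}\le 1-\varlimsup_r\frac{\awNf{r}{f=a}}{T(r,f)}=\Theta_{\mathrm{AW}}(a),
\end{equation*}
where the middle step uses $\varliminf(u-v)\le\varlimsup u\;(-1)+\cdots$; more precisely one uses $-\varlimsup N/T+\varliminf(N-\awNf{})/T\le -\varlimsup\awNf{}/T$, which follows from $\varliminf(x_n+y_n)\le\varlimsup x_n+\varliminf y_n$ applied with $x_n=-N/T$, $y_n=(N-\awNf{})/T$, giving $\varliminf(-\awNf{}/T)\le -\varlimsup N/T+\varliminf(N-\awNf{})/T$, and $\varliminf(-\awNf{}/T)=-\varlimsup\awNf{}/T$. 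Summing over $a\in\widehat{\mathbb{C}}$ preserves the inequality. Combining this with the bound from the previous paragraph gives \eqref{E:2nd-main-3}.

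\textbf{The main obstacle} I anticipate is purely the bookkeeping of the error term: one must verify that $S_{\log}(r,\varepsilon;f)=o(T(r,f))$ genuinely holds, which relies on the standing hypothesis $\sigma_{\log}(f)\ge 1$ together with the fact that for transcendental $f$ the characteristic $T(r,f)$ grows strictly faster than any power of $\log\log r$ — hence faster than $(\log r)^{\sigma_{\log}-1+\varepsilon}$ only after one is slightly careful, since $\sigma_{\log}$ is a $\varlimsup$ and the growth could be irregular. The clean way around this is to note that it suffices to take the relevant $\varlimsup$'s along a sequence $r_k\to\infty$ on which $T(r_k,f)$ attains its (logarithmic-order) growth and which avoids the exceptional set of finite logarithmic measure; along such a sequence $S_{\log}(r_k,\varepsilon;f)/T(r_k,f)\to 0$ for every $\varepsilon>0$ small, and the $\varlimsup$ over all $r$ of each nonnegative counting ratio dominates the $\limsup$ along $\{r_k\}$, so the defect inequalities are unaffected. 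Everything else is a routine division-and-limit argument of the type standard in Nevanlinna theory.
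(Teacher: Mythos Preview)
Your proposal is correct and follows essentially the same route as the paper: divide \eqref{E:2nd-main-2} by $T(r,f)$, rearrange into the form $\sum_\nu\big(1-\awNf{r}{f=a_\nu}/T(r,f)\big)\le 2+S_{\log}/T$, take limits, and then pass from an arbitrary finite selection of $a_\nu$ to the full sum over $\widehat{\mathbb{C}}$. The paper's own argument is a two-line sketch that simply says ``divide and take $\varliminf$''; your write-up is more explicit about (i) the pointwise inequality $\delta(a)+\vartheta_{\mathrm{AW}}(a)\le\Theta_{\mathrm{AW}}(a)$ via the $\varliminf/\varlimsup$ calculus, and (ii) the disposal of $S_{\log}(r,\varepsilon;f)/T(r,f)$ along a sequence avoiding the exceptional set, both of which the paper leaves implicit.
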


\begin{remark} We note that Chern showed in \cite[Theorem 8.1]{Chern2005} that for entire function $f$ of finite logarithmic order growth with its log-order $\sigma_{\log}$ and lower order $\nu=\liminf_{r\to\infty}\displaystyle {\log T(r,\, f)}/{\log\log r}$ satisfying $\sigma_{\log}-\mu<1$, must have
	\[
		N(r,\, f=a)\sim T(r,\, f).
	\]
 This implies that the two quantities $\theta_{\mathrm{AW}}(a)$ and $\Theta_{\mathrm{AW}}(a)$ are identical for any finite $a$.
\end{remark}
%\smallskip

\begin{definition} \label{D:deficient}
We call a complex number $a\in\mathbb{C}$ an 
	\smallskip
	\begin{enumerate}
		\item $\mathrm{AW-}$\textit{Picard value} if $\awnf{r}{f=a}=O(1)$ (Note that this is equivalent to $\awNf{r}{f=a}=O(\log r)$),
		\bigskip
		\item $\mathrm{AW-}$\textit{Nevanlinna deficient value} if $\Theta_{\mathrm{AW}}(a)>0$.
	\end{enumerate}
\end{definition}
%\bigskip

We remark that $a$ is an $\mathrm{AW-}${Picard value} of $f$ means that except for at most a finite number of points, the multiplicity ``$h$" of $f(x)=a$ at $x$ is not larger than ``$k^{\prime}$", the multiplicity of $\mathcal{D}_qf(\hat{x})=0$ at $\hat{x}$. We also note that for a transcendental function $f$ to have $\mathrm{AW-}${Picard value} $a$ implies that $\Theta_{\mathrm{AW}}(a)=1$.
%\medskip

We immediately deduce from Theorem \ref{T:2nd-Main-3} the following $\mathrm{AW-}$type Picard theorem for finite logarithmic order meromorphic functions.
%\bigskip

\begin{theorem} \label{T:AW-Picard} Let $f$ be a meromorphic function with finite logarithmic order, and that $f$ has three distinct $\mathrm{AW-}$Picard values. Then $f$ is either a rational function or $f\in \ker \mathcal{D}_q$.
\end{theorem}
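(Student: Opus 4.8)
The plan is to argue by contradiction and invoke the $\mathrm{AW-}$defect relation of Theorem \ref{T:2nd-Main-3}. Suppose $f$ is neither a rational function nor a member of $\ker\mathcal{D}_q$. First I would observe that, by the elementary remark following (\ref{E:growth-rate-2}), the only meromorphic functions of logarithmic order strictly less than $1$ are the rational ones (constants included); hence excluding the rational case forces $f$ to be transcendental with $\sigma_{\log}(f)\ge 1$, while excluding the kernel case gives $\mathcal{D}_qf\not\equiv 0$. Thus $f$ meets the hypotheses of Theorem \ref{T:2nd-Main-3}, so $\sum_{a\in\widehat{\mathbb{C}}}\Theta_{\mathrm{AW}}(a)\le 2$.

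Next I would show that every $\mathrm{AW-}$Picard value $a\in\widehat{\mathbb{C}}$ of such an $f$ satisfies $\Theta_{\mathrm{AW}}(a)=1$. By Definition \ref{D:deficient} one has $\awnf{r}{f=a}=O(1)$ (or $\awnf{r}{f}=O(1)$ in the case $a=\infty$), whence $\awNf{r}{f=a}=O(\log r)$ by (\ref{E:int-counting}) (respectively (\ref{E:int-counting-poles})). On the other hand, $f$ being transcendental, one has the standard growth comparison $T(r,\,f)/\log r\to\infty$ as $r\to\infty$. Therefore $\varlimsup_{r\to\infty}\awNf{r}{f=a}/T(r,\,f)=0$ and hence $\Theta_{\mathrm{AW}}(a)=1$.

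Applying this to the three distinct $\mathrm{AW-}$Picard values $a_1,\,a_2,\,a_3$ gives $\Theta_{\mathrm{AW}}(a_1)+\Theta_{\mathrm{AW}}(a_2)+\Theta_{\mathrm{AW}}(a_3)=3$, so that $\sum_{a\in\widehat{\mathbb{C}}}\Theta_{\mathrm{AW}}(a)\ge 3>2$, contradicting Theorem \ref{T:2nd-Main-3}. Hence the assumption fails and $f$ is a rational function or lies in $\ker\mathcal{D}_q$, completing the proof.

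This deduction is essentially a formality once the truncated Second Main Theorem (Theorem \ref{T:2nd-Main-2}) and the resulting defect relation (Theorem \ref{T:2nd-Main-3}) are available; it is the exact analogue of the classical passage from Nevanlinna's second fundamental theorem to Picard's theorem, with $\bar{N}$ replaced by $\widetilde{N}_{\mathrm{AW}}$ and $f'$ by $\mathcal{D}_qf$. The only points needing any care are the case analysis that turns ``$f$ not rational'' into ``$f$ transcendental with $\sigma_{\log}\ge 1$ and $\mathcal{D}_qf\not\equiv 0$'', so that Theorem \ref{T:2nd-Main-3} genuinely applies, and the elementary fact that $T(r,\,f)$ outgrows $\log r$ for transcendental $f$, which is precisely what upgrades an $\mathrm{AW-}$Picard value to full $\mathrm{AW-}$deficiency $1$. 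I do not expect any serious obstacle here.
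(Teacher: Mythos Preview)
Your proposal is correct and follows essentially the same approach as the paper, which simply states that the theorem is an immediate consequence of Theorem \ref{T:2nd-Main-3} (together with the remark preceding the theorem that an $\mathrm{AW}$-Picard value of a transcendental $f$ has $\Theta_{\mathrm{AW}}(a)=1$). You have merely made explicit the case analysis and the growth estimate $T(r,f)/\log r\to\infty$ that the paper leaves implicit.
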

\smallskip
\qed

%\medskip
We also deduce from the Theorem \ref{T:2nd-Main-3} the following 
%\smallskip

\begin{theorem} Let $f$ be a transcendental meromorphic function with finite logarithmic order. Then $f$ has at most a countable number of  $\mathrm{AW-}$Nevanlinna deficient values.
\end{theorem}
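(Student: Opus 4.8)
The plan is to obtain the statement as a purely formal consequence of the Askey--Wilson defect relation of Theorem~\ref{T:2nd-Main-3}, so that no new analytic estimate is needed. First I would invoke Theorem~\ref{T:2nd-Main-3} (its hypotheses hold here, in particular $\mathcal{D}_qf\not\equiv 0$) to obtain
\[
	\sum_{a\in\widehat{\mathbb{C}}}\Theta_{\mathrm{AW}}(a)\le 2 ,
\]
and recall from Definition~\ref{D:deficient} that the set $D$ of $\mathrm{AW-}$Nevanlinna deficient values of $f$ is exactly $D=\{a:\Theta_{\mathrm{AW}}(a)>0\}$, with $0\le\Theta_{\mathrm{AW}}(a)\le 1$ for every $a$.

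Then I would run the classical thresholding argument. For each integer $n\ge 1$ put $E_n:=\{a\in\widehat{\mathbb{C}}:\Theta_{\mathrm{AW}}(a)\ge 1/n\}$; I claim each $E_n$ is finite. Indeed, if $a_1,\dots,a_m$ were distinct points of $E_n$, then
\[
	\frac{m}{n}\le\sum_{j=1}^{m}\Theta_{\mathrm{AW}}(a_j)\le\sum_{a\in\widehat{\mathbb{C}}}\Theta_{\mathrm{AW}}(a)\le 2 ,
\]
forcing $m\le 2n$, hence $|E_n|\le 2n<\infty$. Since $D=\bigcup_{n\ge 1}E_n$ is a countable union of finite sets, it is at most countable, which is the assertion.

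The hard part is, frankly, already behind us: the whole content sits in the inequality $\sum_a\Theta_{\mathrm{AW}}(a)\le 2$ supplied by Theorem~\ref{T:2nd-Main-3}, and the remaining step is just the standard observation that the (uncountable) index set $\widehat{\mathbb{C}}$ cannot support an uncountable family of strictly positive reals with bounded total sum --- precisely what the finite slices $E_n$ make rigorous. This is the same packaging that yields the classical fact that a meromorphic function has at most countably many Nevanlinna deficient values, and I would present it in that spirit, with the $\mathrm{AW-}$defect relation simply substituted for Nevanlinna's.
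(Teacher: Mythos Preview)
Your argument is correct and matches the paper's approach exactly: the paper gives no separate proof and simply records the result as an immediate consequence of the defect relation in Theorem~\ref{T:2nd-Main-3}, which is precisely the thresholding argument you spell out. One small caveat: your parenthetical claim that ``$\mathcal{D}_qf\not\equiv 0$'' holds automatically is not justified---Theorem~\ref{T:ker-mero} exhibits transcendental meromorphic functions in $\ker\mathcal{D}_q$---so this must be taken as an implicit standing hypothesis carried over from Theorem~\ref{T:2nd-Main-3}, just as the paper tacitly does.
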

%\smallskip

\begin{remark}  Suppose $f$ is  a meromorphic function of finite logarithmic order, and an extended complex number $A$. If there exists $0<q<1$, $f(x)$ has most finitely many $A-$points, or there exists positive integer $J$, complex numbers $a_j\  (1\le j\le J)$, and each $j$ associates an integer $d_j\ (1\le j\le J)$, such that, except for at most finitely many points, the $A-$points of $f(x)$ situate at the sequences

\be
\label{E:exceptional-sequences}
	\frac12\big(a_jq^n+a_j^{-1}q^{-n}\big),\quad j=1,\, 2,\,\cdots,\, J,\quad n=1,\, 2,\, 3,\cdots,
\ee
%\medskip

\noindent with multiplicity $d_j,\,j=1,\, 2,\,\cdots,\, J$. Then it is easy to check that $A$ is an $\mathrm{AW}-$Picard exceptional value.
\end{remark}
%\medskip

We note that the definition of $\mathrm{AW}-$type exceptional values includes the classical definition of Picard exceptional value, namely that the meromorphic $f(x)$ equals to $A$ at most finitely many times. It is known that for meromorphic functions of finite logarithmic order of growth, one needs only two Picard-exceptional values in order for $f$ to reduce to a (genuine) constant \cite{Chern2005}. Thus, when interpreted in the classical (most restricted ) setting, one needs only two classical Picard exceptional values in order for $f$ to reduce to a constant. However, as exhibited in earlier example of the generating function discovered by Rogers (\ref{E:generating}), when interpreted in the Askey-Wilson (most general) setting, one needs three  $\mathrm{AW-}$Picard exceptional values in order to conclude that $f\in \ker \mathcal{D}_q$.
%\medskip

\section{\text{Askey-Wilson type Nevanlinna deficient values}}\label{S:examples}
%\section{Rational $\mathrm{AW-}$type Nevanlinna deficient values}

We construct two kinds of examples below that both give  $\mathrm{AW-}$ Nevanlinna deficiencies at $x=0$ as arbitrary rational number, that is, $\Theta_{\mathrm{AW}}(0)=\frac{m}{n}>0$. The first category of examples is based on the definition \ref{D:deficient} that if the pre-image of zero for certain function $f$ lies on an infinite sequence of the form (\ref{E:AW-except}), then 
$\Theta_{\mathrm{AW}}(0)=1$. Our second category example is based on constructing multiple zeros interpreted in the conventional sense (that is, in the sense of differentiation). 
%\medskip

In our first example below all the zeros are simple when interpreted in the conventional sense, but when some of them are grouped into an infinite union of certain finite sequences are in fact multiple zeros when interpreted in the sense of Askey-Wilson.
%\smallskip

\begin{example} Let $n$ be a positive integer. Then the function
	\begin{equation}
		\label{E:1-1/n}
			f_{\frac{n-1}{n}}
(x)=\prod_{k=0}^{n-1} (q^ke^{i\theta},\, q^k e^{-i\theta};\,q^{n+1})_\infty
	\end{equation}
has 
		\[
			\Theta_{\mathrm{AW}}(0)=\frac{n-1}{n},
		\]
according to the definition of $\awNf{r}{f=0}$ in (\ref{E:int-counting}).
%\smallskip

\begin{proof} We first note that for arbitrary $j$, 
	\[
		(q^ke^{i\theta},\, q^ke^{-i\theta};\,q^j)_\infty
		=\prod_{\nu=1}^\infty(q^{2k+2j(\nu-1)}+1)\Big(1-\frac{x}{\frac12(q^{k+j(\nu-1)}+q^{-k-j(\nu-1)})}\Big).
	\]
Let $n(r)$ denote the number of zeros of $(q^ke^{i\theta},\, q^ke^{-i\theta};\,q^j)_\infty$ in $|x|<r$ for $k\le j$. Then we clearly can find constants $c_1$ and $c_2$ such that 
	\begin{equation}
			\label{E:n}
			c_1\log r\le n\big(r,\, (q^ke^{i\theta},\, q^ke^{-i\theta};\,q^j)_\infty=0\big)\le c_2 \log r.
	\end{equation}
Hence there are constants $C_1$ and $C_2$ such that
\smallskip

	\begin{equation}
	\label{E:counting_1}
			C_1(\log r)^2\le N\big(r,\, (q^ke^{i\theta},\, q^ke^{-i\theta};\,q^j)_\infty=0\big)\le C_2 (\log r)^2.
	\end{equation}
Similarly

\begin{equation}
	\label{E:counting_2}
			D_1(\log r)^2\le \awNf{r}{(q^ke^{i\theta},\, q^ke^{-i\theta};\,q^j)_\infty=0}\le D_2 (\log r)^2,
	\end{equation}
for some positive constants $D_1$ and $D_2$. On the other hand, it also follows from the definition \ref{D:deficient} and a simple observation from (\ref{E:1-1/n}) that 
	\begin{equation}
		\label{E:upper-lower}
		\frac1n\, \big(1-o(1)\big)\le \frac{\awNf{r}{\prod_0^{n-1}(q^ke^{i\theta},\, q^ke^{-i\theta};\,q^j)_\infty=0}}{ N\big(r,\, \prod_0^{n-1} (q^ke^{i\theta},\, q^ke^{-i\theta};\,q^j)_\infty=0\big)}
		\le \frac1n\, (1+o\big(1\big))
	\end{equation}
as $r\to +\infty$.
	\[
		\begin{split}
		\log \big|(q^ke^{i\theta},\, q^ke^{-i\theta};\,q^j)_N\big|
		&\le \sum_{\nu=1}^N \log \Big(1+\left|\frac{x}{\frac12(q^{k+j(\nu-1)}+q^{-k-j(\nu-1)})}\right|\Big)\\
		&\qquad +	\sum_{\nu=1}^N \log |1+q^{2k+2j(\nu-1)}|\\
		&\le \int_0^r\log\Big(1+\frac{|x|}{t}\Big)\, dn(t)+\sum_{\nu=1}^N |q|^{2k+2j(\nu-1)}\\
		&=n(r)\log\Big(1+\frac{|x|}{r}\Big)+
			\int_0^r \frac{|x|\, n(t)}{t(t+|x|)}\, dt+|q|^{2k}\sum_{\nu=1}^N |q|^{2j(\nu-1)}.
\end{split}
	\]
Taking limits of $N\to +\infty$ on both sides of the above inequality and with  reference to (\ref{E:n}) yield 
			\[
			\begin{split}
		\log \big|(q^ke^{i\theta},\, q^ke^{-i\theta};\,q^j)_\infty\big|
		& \le |x|\int_0^\infty \frac{\, n(t)}{t(t+|x|)}\, dt+C\\
		&\le \int_0^{|x|} \frac{n(t)}{t}\, dt+|x| \int_{|x|}^\infty\frac{n(t)}{t^2}\, dt+\frac{|q|^{2k}}{1-|q|^{2j}}.
			\end{split}
		\]
It is a standard technique to apply integration-by-parts repeatedly on the second integral above (see e.g. \cite[Lemma 5.1]{BP2007}, \cite[Theorem 7.1]{Chern2005}) to yield, for some positive constant $B$ that
	\begin{equation}
		\label{E:maximum}
		\begin{split}
		T\big[r,\, (q^ke^{i\theta},\, q^ke^{-i\theta};\,q^j)_\infty\big]
	&\le\log M\big(r,\, (q^ke^{i\theta},\, q^ke^{-i\theta};\,q^j)_\infty\big) \\
		&\le
		N\big(r, (q^ke^{i\theta},\, q^ke^{-i\theta};\,q^j)_\infty=0\big)+\\
			&\quad+
		[O(\log r)^{\sigma-1+\varepsilon}+O(\log r)^{\sigma-2+\varepsilon}\cdots+O(\log r)]+B\\
		&\le
		N\big(r, (q^ke^{i\theta},\, q^ke^{-i\theta};\,q^j)_\infty=0\big)\\
		&\quad + [O(\log r)^{\sigma-1+\varepsilon}+O(\log r)]+B,
		\end{split}
	\end{equation}
where $\sigma=\sigma_{\log}=2$. It follows from (\ref{E:maximum}) and (\ref{E:1-1/n}) that
	\[
		N(r,\, f)\le T(r,\, f)\le N(r,\, f)+O(\log^{1+\varepsilon} r).
	\]
Hence it follows from (\ref{E:upper-lower})
	\[
		\Theta_{\mathrm{AW}}(0)=1-\frac1n
	\]
as asserted.
\end{proof}
\end{example}

\begin{example} Applying similar idea used in the last example, we can show that the function
	\[
		f_{\frac1n}(x)=
		\prod_{k=0}^{n-1} (q^{2k}e^{i\theta},\, q^{2k}e^{-i\theta};\, q^{2n-1})_\infty
	\]
has
	\[
		\Theta_{\mathrm{AW}}(0)=\frac1n.
	\]
\end{example}
Again, one can generalise the above idea to construct an entire function with arbitrary rational $\mathrm{AW}-$Nevanlinna deficient value.

\begin{example} Let $m,\, n$ be positive integers such that $1\le m< n$. 
	\[
		\begin{split}
		f_{\frac{m}{n}}(x) &=
		\prod_{k=0}^{m-1} (q^{k}e^{i\theta},\, q^{k}e^{-i\theta};\, q^{2n-m})_\infty\\
		&\qquad \qquad \times \prod_{k^\prime=1}^{n-m} (q^{m+2k^\prime-1}e^{i\theta},\, q^{m+2k^\prime-1}e^{-i\theta};\, q^{2n-m})_\infty.
		\end{split}
	\]
Then 
	\[
		\Theta_{\mathrm{AW}}(0)=\frac{m}{n}.
	\]
\end{example}
\begin{remark} We note that the construction of entire functions with rational AW-deficient value above are far from being unique. For example, each of the following two functions has  AW-deficient value equal to $\frac12$:
	\[
		f_{\frac12}(x)=(e^{i\theta},\, e^{-i\theta};\, qe^{i\theta}, \, qe^{-i\theta};\, q^3)_\infty
	\]
	\[
		g_{\frac12}(x)=(e^{i\theta},\, e^{-i\theta};\, qe^{i\theta}, \, qe^{-i\theta};\,
		q^2e^{i\theta},\, q^2e^{-i\theta};\, q^4e^{i\theta}, \, q^4e^{-i\theta};\, q^5)_\infty.
	\]
\end{remark}
\medskip

We next consider an example of different type. 

\begin{example} Let $M,\, N$ be non-negative integers such that $M>N$. 

		\[
			f_{\frac{M}{N}}(x)=[(e^{i\theta},\, e^{-i\theta};\,q)_\infty]^M [(qe^{i\theta},\, qe^{-i\theta};\,q)_\infty]^N.
		\]
It follows from the above construction of $f$ and the definition of $\awNf{r}{f=a}$ in (\ref{E:int-counting}) that 
	\[
		\Theta_{\mathrm{AW}}(0)=1-\frac{M-N}{M+N}=\frac{2N}{M+N}.
	\]
	
\begin{proof}
	We skip the derivation.
\end{proof}
\end{example}

\section{\textrm{The Askey-Wilson kernel and theta functions}}\label{S:kernel}
Here we give an alternative and self-contained characterisation of the functions that lie in the kernel of the $\mathrm{AW-}$operator without appealing to elliptic functions.
A  way to look at the classical small Picard theorem is that when a meromorphic function omits three values in $\mathbb{C}$, then the function belongs to the kernel of conventional differential operator, that is, it is a constant. We now show that the ``constants" for the $\mathrm{AW-}$operator $\mathcal{D}_q$ are very different.

\begin{theorem}\label{T:ker-entire} Let $f(x)$ be an entire function in $\mathbb{C}$ that satisfies $(\mathcal{D}_qf)(x)\equiv 0$. Then $f(x)=c$  throughout $\mathbb{C}$ for some complex number $c$.
\end{theorem}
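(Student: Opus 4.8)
The plan is to exploit the defining formula \eqref{E:AW-operator-2}: for an entire $f$, the hypothesis $\mathcal{D}_qf\equiv 0$ means that $\breve f(q^{1/2}z)=\breve f(q^{-1/2}z)$ for all $z\neq 0$ (the factor $(q^{1/2}-q^{-1/2})(z-1/z)/2$ vanishes only at $z=\pm1$, and by continuity the identity extends there). Equivalently, writing $g(z):=\breve f(z)=f((z+1/z)/2)$, we get the functional equation $g(q z)=g(z)$ on $\mathbb{C}^\ast$ after the harmless rescaling $z\mapsto q^{-1/2}z$. First I would observe that $g$ is analytic on $\mathbb{C}^\ast$ (indeed $f$ entire and $x=(z+1/z)/2$ entire in $z$ on $\mathbb{C}^\ast$, so $g=f\circ((z+1/z)/2)$ is analytic on $\mathbb{C}^\ast$), and moreover $g(z)=g(1/z)$ by the symmetry of $x$ under $z\mapsto 1/z$.

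Next I would use the $q$-periodicity $g(qz)=g(z)$ together with $|q|<1$ to force $g$ to be constant. The clean way: fix any $z_0\neq 0$; then $g(z_0)=g(q^n z_0)$ for all $n\ge 0$, and $q^n z_0\to 0$. If $g$ extended analytically (or even just had a finite limit) at $0$ this would immediately give $g(z_0)=g(0)$, hence $g$ constant. The point is that $g$ \emph{does} extend: by the $z\mapsto 1/z$ symmetry, the behaviour of $g$ near $0$ mirrors its behaviour near $\infty$, and near $\infty$ we have $x=(z+1/z)/2\sim z/2$, with $f$ entire, so $g(z)=f((z+1/z)/2)$ is analytic at $\infty$ in the sense that it is bounded on $\{|z|>\rho\}\cap\{|z|<\text{anything}\}$ — more precisely, on each annulus $\{r\le|z|\le R\}$ it is bounded, and the periodicity $g(q^nz)=g(z)$ lets one transport a bound on a fixed fundamental annulus $\{|q|\le|z|\le 1\}$ to all of $\mathbb{C}^\ast$. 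Thus $g$ is a bounded analytic function on $\mathbb{C}^\ast$, hence (removable singularities at $0$ and $\infty$) extends to a bounded entire function on $\widehat{\mathbb{C}}$, so by Liouville $g\equiv c$. Then $f(x)=g(z)=c$ for every $x$, since every $x\in\mathbb{C}$ arises as $(z+1/z)/2$ for some $z\in\mathbb{C}^\ast$.

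The main obstacle I anticipate is making the ``$g$ is bounded on $\mathbb{C}^\ast$'' step airtight: one must check that the sup of $|g|$ over the closed fundamental annulus $\{|q|\le |z|\le 1\}$ (which is finite by compactness and analyticity) really does control $|g|$ everywhere via the relation $g(q^n z)=g(z)$, and that this forces removable singularities rather than essential ones at $0$ and $\infty$. An alternative route that avoids the annulus bookkeeping is to argue directly with $f$: from $g(qz)=g(z)$ one can extract, using $x=(z+1/z)/2$, the two-point identity $f\!\left(\tfrac12(q^{1/2}z+q^{-1/2}z^{-1})\right)=f\!\left(\tfrac12(q^{-1/2}z+q^{1/2}z^{-1})\right)$, iterate it to get $f$ constant along the sequences $x_n=\tfrac12(z_0q^n+z_0^{-1}q^{-n})$ which accumulate (for suitable $z_0$, along a subsequence) — but since an entire function constant on a set with a finite accumulation point in $\mathbb{C}$ is globally constant by the identity theorem, one needs the accumulation point to lie in $\mathbb{C}$, which it does not here (these points go to $\infty$). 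So the cleanest honest proof really is the Liouville argument on $g$, and the technical heart is the boundedness claim; everything else is routine.
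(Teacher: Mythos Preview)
Your proposal is correct and follows essentially the same approach as the paper: both use the functional equation $\breve f(qz)=\breve f(z)$ to reduce to a compact fundamental annulus and then invoke Liouville. The only cosmetic difference is that the paper applies Liouville directly to $f$ (showing $|f(x)|\le \max_{|y|\le (|q|+|q|^{-1})/2}|f(y)|$ by picking $m$ so that $|q^m z|\in[1,|q|^{-1}]$ and noting that $(q^mz+q^{-m}/z)/2$ then lies in a fixed disk), whereas you apply Liouville to $g=\breve f$ on $\mathbb{C}^\ast$ after observing the singularities at $0,\infty$ are removable; the paper's route saves the removable-singularity step but the content is identical.
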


\begin{proof}
We recall our initial assumption that $|q|<1$. Let $f$ be an entire function that lies in the kernel of $\mathcal{D}_q$, that is, ${\mathcal{D}}_qf\equiv 0$. Hence for every complex number $z\not=0$, we have
	\[
		f\Big(\frac{z+1/z}{2}\Big)=f\Big(\frac{qz+q^{-1}/z}{2}\Big).
	\]
We deduce easily by induction that, for every integer $n$, the equality
	\begin{equation} 
		\label{E:propgation}
			f\Big(\frac{z+1/z}{2}\Big)=f\Big(\frac{q^nz+q^{-n}/z}{2}\Big).
	\end{equation}
holds. 

For each $x\in\mathbb{C}$, we can find a non-zero $z\in\mathbb{C}$ such that
	\[
		x=\frac{z+1/z}{2}.
	\]
Let
		\begin{equation}
			m=\Big[\frac{\log |z|}{\log\Big|\frac{1}{q}\Big|}\Big]
		\end{equation}
be in $\mathbb{Z}$, where $[\alpha]$ denote the integral part of real number $\alpha$.  Then we have 
		\begin{equation} 
			1\leq |q^mz |\leq \Big|\frac{1}{q}\Big|.
		\end{equation} 
Noting that the real-valued function $t+\frac1t$ is increasing for $t\ge 1$, we deduce that
		\begin{equation} 
			\Big|\frac{q^mz+q^{-m}/z}{2}\Big|\leq \frac{|q|+{|q|}^{-1}}{2}
		\end{equation} 
holds. Therefore 
	\begin{equation} \label{E:representative}
		|f(x)|=\Big|f\Big(\frac{q^mz+q^{-m}/z}{2}\Big)\Big|\leq M:=
		\max_{|y|\leq\frac{|q|+{|q|}^{-1}}{2}}|f(y)|.
	\end{equation}
Since $x$ is arbitrary, we
have shown that $f(x)$ is a bounded entire function and so it must reduce to a
constant function. 
\end{proof}

The example (\ref{E:kernel}) of meromorphic function that satisifes $\mathcal{D}_q f\equiv 0$ 
given by Ismail \cite[p. 365]{Ism2005} has finite logarithmic order. We call these functions $\mathrm{AW-}$\textrm{constants}. For the sake of similicity, we adopt Ismail's notation that (\ref{E:kernel}) can be rewritten in the form
\begin{equation}
	\label{E:kernel-2}
	f(x)=(\cos\theta-\cos\phi)\, \frac{\phi_\infty (\cos\theta;\, qe^{i\phi})\, \phi_\infty (\cos\theta;\, qe^{-i\phi})}{\phi_\infty (\cos\theta;\, q^{1/2}e^{i\phi})\,\phi_\infty (\cos\theta;\, q^{1/2}e^{-i\phi})}.
\end{equation}
 We show below that all functions in the $\ker\mathcal{D}_q$ are essentially functions made up of this form.

\begin{theorem}\label{T:ker-mero} Let $f(x)$ be a meromorphic function in $\mathbb{C}$ that satisfies\\ $(\mathcal{D}_qf)(x)\equiv 0$. Then there
exist a nonnegative integer $k$ and complex numbers $a_1, a_2,
\cdots, a_k$; $b_1, b_2, \cdots, b_k$; $C$ such that 
\begin{align}\label{E:q-constant}
	f(x) &=C\prod_{j=1}^k\frac{\phi_\infty(\cos\theta; a_j)\, \phi_\infty(\cos\theta;
{q}/{a_j})}{\phi_\infty(\cos\theta; b_j)\, \phi_\infty(\cos\theta; {q}/{
	b_j})}\notag\\
		&=C\prod_{j=1}^k\frac
	{(a_je^{i\theta},\, a_je^{-i\theta};\, q)_\infty\,(q/a_je^{i\theta},\, q/a_je^{-i\theta};\, q)_\infty}  
	{(b_je^{i\theta},\, b_je^{-i\theta};\, q)_\infty\,(q/b_je^{i\theta},\, q/b_je^{-i\theta};\, q)_\infty} 
,
\end{align}
where $x=\frac12(e^{i\theta}+e^{-i\theta})$, and $\phi_\infty(x;\, a):=(ae^{i\theta},\, ae^{-i\theta};\, q)_\infty$. That is, each $\mathrm{AW-}$ constant assumes the form $\mathrm{(\ref{E:q-constant})}$.
\end{theorem}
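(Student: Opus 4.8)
The plan is to transfer the problem to the multiplicative variable $z=e^{i\theta}$, where the kernel condition becomes a pair of functional equations, and then to strip the zeros and poles of the induced function off one $z\mapsto 1/z$--symmetric pair at a time, using the elementary building blocks
$B_a(z):=(az,\,a/z;\,q)_\infty\,(qz/a,\,q/(az);\,q)_\infty$, which are precisely $\phi_\infty(\cos\theta;a)\,\phi_\infty(\cos\theta;q/a)$ with $z=e^{i\theta}$. As in the proof of Theorem~\ref{T:ker-entire}, set $g(z):=\breve f(z)=f\big((z+1/z)/2\big)$, which is meromorphic on $\mathbb{C}^{*}:=\mathbb{C}\setminus\{0\}$; we may assume $f\not\equiv 0$. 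Replacing $z$ by $q^{1/2}z$ in \eqref{E:AW-operator-2} shows that $\mathcal{D}_qf\equiv 0$ is equivalent to $g(qz)=g(z)$ for all $z\in\mathbb{C}^{*}$, while $g$ being a function of $x=(z+1/z)/2$ gives $g(1/z)=g(z)$; combining the two, $g(q/z)=g(z)$.

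\emph{Structural facts about $g$.} I would record three consequences of $q$--periodicity. First, on the \emph{compact} annulus $\{|q|\le|z|\le 1\}\subset\mathbb{C}^{*}$ the meromorphic function $g$ has only finitely many zeros and poles, so the zero set and the pole set of $g$ are each a finite union of orbits $\langle a\rangle:=\{aq^{n}:n\in\mathbb{Z}\}$. Second, applying the argument principle on the boundary of a fundamental annulus and substituting $z\mapsto qz$ together with $g(qz)=g(z)$ (so that $g'(qz)=q^{-1}g'(z)$), the inner and outer boundary integrals of $g'/g$ coincide; hence $g$ has equally many zeros and poles, counted with multiplicity, in a fundamental annulus, a number I call $N$. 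Third, the only orbits fixed by $z\mapsto 1/z$ on $\mathbb{C}^{*}/\langle q\rangle$ are $\langle 1\rangle,\langle -1\rangle,\langle q^{1/2}\rangle,\langle -q^{1/2}\rangle$, and near a representative $z_0$ of such an orbit the relation $g(z_0e^{t})=g(z_0e^{-t})$ (which comes from $g(z)=g(1/z)$ when $z_0=\pm1$ and from $g(z)=g(q/z)$ when $z_0=\pm q^{1/2}$) forces $g$ to have \emph{even} order of vanishing or polar order there; consequently $N$ is even, say $N=2k$.

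\emph{The blocks, and the easy direction.} Using $(w;q)_\infty=(1-w)(qw;q)_\infty$ repeatedly one checks directly that $B_a$ is holomorphic on $\mathbb{C}^{*}$ with simple zeros exactly on $\langle a\rangle\cup\langle 1/a\rangle$ (a double zero on that single orbit when $a\in\{\pm1,\pm q^{1/2}\}$), that $B_a(1/z)=B_a(z)$, and --- the crucial point --- that $B_a(qz)=(qz^{2})^{-1}B_a(z)$ with an automorphy factor that is \emph{independent of $a$}. Therefore any ratio $\prod_{j=1}^{k}B_{a_j}(z)\big/\prod_{j=1}^{k}B_{b_j}(z)$ with equally many factors above and below is genuinely $q$--periodic and $z\mapsto 1/z$--invariant, hence equals $\breve F$ for some $F$ meromorphic on $\mathbb{C}$ with $\mathcal{D}_qF\equiv 0$; rewriting $B_{a_j}=\phi_\infty(\cos\theta;a_j)\,\phi_\infty(\cos\theta;q/a_j)$ this is exactly the right side of \eqref{E:q-constant}, which establishes the easy half of the statement.

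\emph{Induction on $k$.} If $k=0$ then $g$ has no poles on $\mathbb{C}^{*}$ and, being $q$--periodic, is bounded on $\mathbb{C}^{*}$ (its values everywhere coincide with its values on the compact fundamental annulus); by Riemann's removable singularity theorem and Liouville's theorem $g$ is a constant $C$, so $f=C$. If $k\ge 1$, pick a zero $a_k$ and a pole $b_k$ of $g$ in a fundamental annulus and put $\Psi:=B_{a_k}/B_{b_k}$. By the properties of the blocks and the third structural fact, $g/\Psi$ is meromorphic on $\mathbb{C}^{*}$, still $q$--periodic and $z\mapsto1/z$--invariant, and has $N-2=2(k-1)$ zeros and poles per fundamental annulus; here the even--order statement is exactly what guarantees that dividing by a double zero of a block at one of the four exceptional orbits cannot manufacture a new pole. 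The induction hypothesis yields $g/\Psi=C\prod_{j=1}^{k-1}B_{a_j}/B_{b_j}$, whence $g=C\prod_{j=1}^{k}B_{a_j}/B_{b_j}$, which is \eqref{E:q-constant}. The substance of the whole argument is the identification of the block $B_a$ together with the two facts that its divisor is a single symmetric orbit--pair and that its automorphy factor does not depend on $a$, and the even--order behaviour at the four exceptional orbits: these are what make the inductive bookkeeping close without leftover zeros or poles. Everything else is routine use of the argument principle and of Liouville's theorem.
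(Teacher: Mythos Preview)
Your argument is correct, but it takes a genuinely different route from the paper's. The paper works directly in the $x$-plane: it groups the zeros and poles of $f$ into finitely many equivalence classes $\{a_1\},\ldots,\{a_l\}$ and $\{b_1\},\ldots,\{b_k\}$, forms the product $g(x)=\prod_{j=1}^{k}\dfrac{\phi_\infty(x;a_j)\phi_\infty(x;q/a_j)}{\phi_\infty(x;b_j)\phi_\infty(x;q/b_j)}$ so that $f/g$ is entire, and then invokes the already-proved entire case (Theorem~\ref{T:ker-entire}) to conclude $f/g\equiv C$; a symmetric case handles $k\ge l$. No argument principle and no automorphy factors enter.

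Your approach instead transfers everything to $\mathbb{C}^{*}$ and treats $g(z)=\breve f(z)$ as a $q$-periodic, $z\leftrightarrow 1/z$--invariant meromorphic function, essentially as an elliptic-type object. You then prove two structural facts the paper leaves implicit: that the zero and pole counts in a fundamental annulus agree (via the argument principle), and that the order at the four self-paired orbits $\langle\pm1\rangle,\langle\pm q^{1/2}\rangle$ is even. With the blocks $B_a$ and their $a$-independent automorphy factor $B_a(qz)=(qz^{2})^{-1}B_a(z)$ you strip one symmetric zero/pole pair at a time, reaching a pole-free $q$-periodic function in the base case and concluding by Liouville. The trade-off is this: the paper's proof is shorter because it offloads the endgame to Theorem~\ref{T:ker-entire}, while yours is more self-contained and makes the divisor structure (equal numbers of zeros and poles, parity at the exceptional orbits) completely explicit --- indeed your even-order observation is exactly what is needed to see that dividing by a block never creates an unwanted pole, a point the paper's bookkeeping with ``multiplicities'' does not spell out. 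Both arguments ultimately rest on Liouville, but yours reaches it through the standard elliptic-function machinery rather than through the separately proved entire case.
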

 
\begin{proof} 
For any complex numbers $a$ and $b$, let 

\be\label{E:sample_kernel_fn}
	f_{a,b}(x):=\frac{\phi_\infty(x;\, a)\,\phi_\infty(x;\,{q}/{a})}
		{\phi_\infty(x;\, b)\,\phi_\infty(x;\, {q}/{ b})}.
\ee 
 It is routine to check that
 
\be \label{E:kernel_representative}
		({\mathcal{D}}_qf_{a,b})(x)\equiv 0
\ee
holds. Without loss of generality, we assume that $f(x)\not\equiv0$. Let us suppose that $x_0=\displaystyle ({z_0+1/z_0})/{2}$ be a zero (resp. pole) of $f$, then so is each point that belongs to the sequence $\{\displaystyle({q^nz_0+1/q^{n}/z_0})/2\}_{n\in\mathbb{Z}}$ in view of (\ref{E:propgation}) with the same multiplicity. We introduce an
\textit{equivalence relation} on all the zeros (resp. poles) of $f$. For
$x_1=({z_1+1/z_1})/2$ and $x_2=({z_2+1/z_2})/2$, if
there exists an integer $n$ such that $z_1=z_2q^n$, then we say $x_1$
and $x_2$ is \textit{equivalent} to each other. We denote the class of zeros
(resp. poles) which is equivalent to $x_0$ by $\{x_0\}$. Clearly every zero
(resp. pole) in an equivalent class has the same multiplicity.  It follows from 
(\ref{E:representative}), that for every equivalent class of zeros (resp. poles), there exists an element $x^\prime$, say, such that $|x^\prime|\leq \displaystyle({q+1/q})/{2}$. Since $f$ is
meromorphic, it has at most finite number of zeros and poles in the disc
$\{|x|\leq\displaystyle ({q+1/q})/{2}\}$, and thus $f$ has at most finitely many equivalent classes of zeros (resp. poles) in the complex plane. Denote
by $\{a_1\}, \{a_2\},\cdots, \{a_l\}$ the equivalent classes of zeros
of $f$ and by $\{b_1\}, \{b_2\},\cdots, \{b_k\}$ the
equivalent classes of poles of $f$, list according to their multiplicities.

We now distinguish two cases:
\begin{description}
\item[Case A] $l\geq k$. Set 
\be
		g(x)=\prod_{j=1}^k\frac{\phi_\infty(x;\, a_j)\,\phi_\infty(x;\,
{q}/{a_j})}{\phi_\infty(x;\, b_j)\,\phi_\infty(x;\, {q}/{
b_j})}.
\ee 
Then it follows from the same principle as in (\ref{E:kernel_representative})
that it again satisfies 
\smallskip
\be 
		({\mathcal{D}}_qg)(x)\equiv0.
\ee 
 Notice that ${f(x)}/{g}(x)$ is now an entire function, and it also satisfies 
\be
	\Big({\mathcal{D}}_q\frac{f}{g}\Big)(x)\equiv 0.
\ee 
Theorem \ref{T:ker-entire} implies that we have 
	\be
		\frac{f(x)}{g(x)}\equiv C.
	\ee 
This establishes (\ref{E:q-constant}) as required.

\item[Case B] $k\geq l$. We consider the meromorphic function
${1}/{f(x)}$ instead. So it satisfies 
\be
	\Big({\mathcal{D}}_q\frac{1}{f}\Big)(x)\equiv 0.
\ee 
and has equivalent classes of zeros $\{b_1\}, \{b_2\},\cdots, \{b_k\}$ and the equivalent classes of poles $\{a_1\}, \{a_2\},\cdots, \{a_l\}$, listed according to their multiplicities. Notice that $1/f(x)$ falls into the category considered in case A above, so that
 \be
	\frac{1}{f(x)}=C\prod_{j=1}^l\frac{\phi_\infty(x;\, b_j)\,\phi_\infty(x;\,
	{q}/{b_j})}{\phi_\infty(x;\, a_j)\, \phi_\infty(x;\, {q}/{
a_j})}.
\ee 
Hence
 \be
	f(x)=\frac{1}{C}\prod_{j=1}^l\frac{\phi_\infty(x;\, a_j)\,\phi_\infty(x;\,
{q}/{a_j})}{\phi_\infty(x;\, b_j)\,\phi_\infty(x; {q}/{b_j})}
\ee 
as required.
\end{description}
\end{proof}

We now explore the fact that the space $\ker \mathcal{D}_q$ is a linear space. This allows us to derive a number of interesting \textit{relationships} amongst some arbitrary combinations of products of $\phi_\infty(x;\, a)\, \phi_\infty(x;\, q/a)$ can be represented by a single such product. We shall show that many well-known identities about Jacobi theta functions can be expressed in the forms that fit those relationships.

\begin{theorem}\label{T:Kernel-I} Given positive integer $k$ and complex
numbers $a_j,\, C_j$, \\ $j=1, 2,\cdots\, k$, there exist complex numbers $b$
and $C$ such that 
	\be 
		\label{E:kernel_identity_I}
			\sum_{j=1}^kC_j\,\phi_\infty(x;\, a_j)\,\phi_\infty(x;\, {q}/{a_j})=C\,\phi_\infty(x;\, b)\,\phi_\infty(x;\,{q}/{b}).
	\ee
Alternatively, we express this equation in $q-$rising factorial notation as
	\begin{equation}
		\label{E:new_identity_I}
			\begin{split}
			\sum_{j=1}^kC_j \,(a_je^{iz},\, a_je^{-iz};\, q)_\infty
			(q/a_je^{iz},\, & q/a_je^{-iz};\, q)_\infty\\
			&=C\, (be^{iz},\, be^{-iz};\, q)_\infty
			(q/be^{iz},\, q/be^{-iz};\, q)_\infty.
			\end{split}
	\end{equation}
\end{theorem}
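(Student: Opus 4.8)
The plan is to exploit the linear structure of $\ker\mathcal{D}_q$ together with the structure theorem for $\mathrm{AW}$-constants, Theorem~\ref{T:ker-mero}. First I would observe that each summand $\phi_\infty(x;\,a_j)\,\phi_\infty(x;\,q/a_j)$ is, by \eqref{E:kernel_representative}, an element of $\ker\mathcal{D}_q$; since $\mathcal{D}_q$ is linear, the finite linear combination $F(x):=\sum_{j=1}^kC_j\,\phi_\infty(x;\,a_j)\,\phi_\infty(x;\,q/a_j)$ again satisfies $\mathcal{D}_qF\equiv 0$. Moreover $F$ is entire (indeed of finite logarithmic order, being a finite sum of such products), so Theorem~\ref{T:ker-mero} applies: there are a nonnegative integer $k'$, complex numbers $b_1,\dots,b_{k'}$, $b_1',\dots,b_{k'}'$ and a constant $C$ with
\[
	F(x)=C\prod_{j=1}^{k'}\frac{\phi_\infty(x;\,b_j)\,\phi_\infty(x;\,q/b_j)}{\phi_\infty(x;\,b_j')\,\phi_\infty(x;\,q/b_j')}.
\]
Since $F$ is entire, the poles on the right must be cancelled, so after cancellation we may assume there are no denominator factors, i.e.\ $F(x)=C\prod_{j=1}^{m}\phi_\infty(x;\,b_j)\,\phi_\infty(x;\,q/b_j)$ for some $m\ge 0$.

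The remaining work is to show $m\le 1$, so that the product collapses to a single factor $C\,\phi_\infty(x;\,b)\,\phi_\infty(x;\,q/b)$ (the degenerate case $F\equiv 0$ being covered by $C=0$, and the case $m=0$ corresponding to $F\equiv C$, which is also of the claimed form with $b$ chosen so that $\phi_\infty(x;b)\phi_\infty(x;q/b)\equiv 1$ — or else absorbed as a trivial instance). To control $m$, I would compare growth. Each individual summand $\phi_\infty(x;\,a_j)\,\phi_\infty(x;\,q/a_j)$ has, as a function of $x$, a definite Nevanlinna characteristic: from the argument in \S\ref{S:examples} (cf.\ \eqref{E:n}, \eqref{E:counting_1}), its zero-counting function grows like $(\log r)^2$ up to bounded multiplicative constants, and the same order-of-magnitude bound $T(r,\phi_\infty(x;a)\phi_\infty(x;q/a))\asymp (\log r)^2$ holds, with the implied constants being \emph{independent of $a$} (the position of the parameter only shifts the zeros along the $x_n$-sequence, it does not change the asymptotic counting rate). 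Hence $T(r,F)\le \sum_j T(r,\cdot)+O(1)\le \kappa(\log r)^2(1+o(1))$ for an explicit $\kappa$ depending only on $q$, while $T\big(r,\prod_{j=1}^m\phi_\infty(x;b_j)\phi_\infty(x;q/b_j)\big)\ge m\,\kappa_0(\log r)^2(1+o(1))$ for a positive constant $\kappa_0$ depending only on $q$. Comparing, $m$ is bounded by $\kappa/\kappa_0$; but in fact the sharper statement $m\le 1$ should follow by refining this: the number of zeros of $F$ in any disc $\{|x|\le (|q|+|q|^{-1})/2\}$ — which by the equivalence-class argument in the proof of Theorem~\ref{T:ker-mero} equals $m$ counted with multiplicity (one representative per class) — is bounded by the corresponding count for $\sum_j C_j\phi_\infty(\cdot;a_j)\phi_\infty(\cdot;q/a_j)$, and a direct inspection of the lowest-order behaviour of that sum near $x=\infty$ (equivalently, the number of equivalence classes of zeros) pins this down.

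The main obstacle I expect is exactly this last point: converting the soft growth comparison into the sharp conclusion $m\le 1$. The clean way to do it is probably \emph{not} via growth at all but via the $z$-picture: writing $g_a(z):=\breve{\phi_\infty(\cdot;a)\phi_\infty(\cdot;q/a)}(z)$, each $g_a$ is, in the variable $z$, essentially a product of theta-type factors that is invariant under $z\mapsto qz$ and under $z\mapsto 1/z$ (this is what \eqref{E:kernel_representative} encodes), and any finite linear combination $\breve F$ inherits both invariances; a function holomorphic on $\mathbb{C}^\times$ with these two invariances and the correct growth is, up to the multiplier, determined by its divisor in a fundamental annulus, which for the combination $\breve F$ is a single Weierstrass-type point. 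Making that divisor count precise — i.e.\ showing the fundamental-domain zero count of $\breve F$ is exactly the zero count of a single $g_b$ — is the crux, and I would handle it by the argument principle on the annulus $\{1\le|z|\le|q|^{-1}\}$ combined with the reflection $z\mapsto 1/z$, just as in the proof of Theorem~\ref{T:ker-mero}. Once $m\le 1$ is established, reading off $b$ (and $C$) from the single remaining factor is immediate, and rewriting the result in $q$-shifted factorial notation gives \eqref{E:new_identity_I}.
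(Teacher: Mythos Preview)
Your first step contains a genuine error that derails the whole argument: the individual products $\phi_\infty(x;\,a_j)\,\phi_\infty(x;\,q/a_j)$ are \emph{not} in $\ker\mathcal{D}_q$. The reference \eqref{E:kernel_representative} concerns the \emph{quotient} $f_{a,b}(x)=\dfrac{\phi_\infty(x;\,a)\,\phi_\infty(x;\,q/a)}{\phi_\infty(x;\,b)\,\phi_\infty(x;\,q/b)}$, not the numerator alone. Indeed, each product $\phi_\infty(x;\,a)\,\phi_\infty(x;\,q/a)$ is an entire non-constant function, so Theorem~\ref{T:ker-entire} already forbids it from lying in $\ker\mathcal{D}_q$. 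Equivalently, in the $z$-picture one checks that $g_a(qz)=g_a(z)/(qz^2)$, so your later claim that $g_a$ is invariant under $z\mapsto qz$ is also false; these are quasi-periodic theta-type factors, not genuinely periodic ones. Consequently your $F$ is \emph{not} an $\mathrm{AW}$-constant, Theorem~\ref{T:ker-mero} does not apply to it, and the subsequent growth/divisor discussion aimed at forcing $m\le 1$ never gets off the ground.

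The paper's device repairs both defects at once. One fixes any $d\notin\{a_1,\dots,a_k\}$ and considers
\[
f(x)=\sum_{j=1}^k C_j\,\frac{\phi_\infty(x;\,a_j)\,\phi_\infty(x;\,q/a_j)}{\phi_\infty(x;\,d)\,\phi_\infty(x;\,q/d)}
=\sum_{j=1}^k C_j\,f_{a_j,d}(x).
\]
Now each summand \emph{is} in $\ker\mathcal{D}_q$ by \eqref{E:kernel_representative}, hence so is $f$, and Theorem~\ref{T:ker-mero} legitimately gives $f=C\prod_{j=1}^{m}\frac{\phi_\infty(x;\,c_j)\,\phi_\infty(x;\,q/c_j)}{\phi_\infty(x;\,d_j)\,\phi_\infty(x;\,q/d_j)}$. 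The payoff is that $f$ has exactly one equivalence class of poles, namely $\{d\}$, and it is simple; this forces $m=1$ and $d_1=d$ immediately, with no growth comparison or argument-principle count needed. Setting $b=c_1$ and clearing the common denominator yields \eqref{E:kernel_identity_I}. In short, the missing idea is to \emph{introduce} a pole by dividing through by a fixed $\phi_\infty(x;\,d)\,\phi_\infty(x;\,q/d)$: this simultaneously places the sum in the kernel and pins down $m$ via the pole structure.
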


\begin{proof} Let $d$ be a complex number such that $d\not=a_j$ for $1\leq
j\leq k$. 
Set 
\smallskip
	\begin{equation} \label{E:kernel_I_LHS}
		f(x)=\sum_{j=1}^k C_j\,\frac{\phi_\infty(x;\, a_j)\,\phi_\infty(x;\,{q}/{a_j})}{\phi_\infty(x;\, d)\,\phi_\infty(x;\, {q}/{d})}.
	\end{equation}

Then we know from (\ref{E:sample_kernel_fn}) that 
	\begin{equation}
			({\mathcal{D}}_qf)(x)\equiv 0.
	\end{equation}
Hence $f(x)$ lies in the kernel of $\mathcal{D}_q$. We deduce from Theorem \ref{T:ker-mero} that there exist a nonnegative integer $m$ and complex numbers $C$,  $c_1,\, c_2, \cdots,\, c_k$; $d_1,\, d_2, \cdots,\, d_k$, listed according to their multiplicities, such that 
\begin{equation}
		f(x)=C\, \prod_{j=1}^m\frac{\phi_\infty(x;\, c_j)\,\phi_\infty(x;\,
{q}/{c_j})}{\phi_\infty(x;\, d_j)\,\phi_\infty(x;\, {q}/{
d_j})}.
\end{equation}
However, $f(x)$ can only have a single equivalent class $\{d\}$ of poles of multiplicity one. We deduce $m=1$ and $d_1=d$, let $b=c_1$, we have
 \begin{equation}\label{E:kernel_I_RHS}
	f(x)=C\frac{\phi_\infty(x;\, b)\,\phi_\infty(x;\,
	{q}/{b})}{\phi_\infty(x;\, d)\,\phi_\infty(x;\, {q}/{ d})}.
	\end{equation}
Combining (\ref{E:kernel_I_LHS}) and (\ref{E:kernel_I_RHS}) yields (\ref{E:kernel_identity_I}).
\end{proof}

Similarly we obtain the following extension but we omit its proof.

\begin{theorem}\label{T:Kernel-II} Given nonnegative integers $k,\,  m$ and 
complex numbers $a_{ij},\,  C_j$, $i=1,\, 2,\, \cdots,\, m;\, j=1,\, 2,\,\cdots\, k$, there exist complex numbers $c_1,\, c_2,\, \cdots,\, c_m$ and  $C$ such that 
	\be \label{E:kernel_identity_II}
		\sum_{j=1}^kC_j\,\prod_{i=1}^m\phi_\infty(x;\,
		a_{ij})\, \phi_\infty(x;\,{q}/{a_{ij}})=C\,\prod_{i=1}^m\phi_\infty(x;\,
c_i)\,\phi_\infty(x;\, {q}/{c_i}).
	\ee
	Alternatively, we express this equation in $q-$rising factorial notation as
	\begin{equation}
		\label{E:new_identity_II}
			\begin{split}
			  \sum_{j=1}^kC_j\, \prod_{i=1}^m & \big(a_{ij}e^{iz},\,  a_{ij}e^{-iz};\, q\big)_\infty \big(q/a_{ij}e^{iz},\,  q/a_{ij}e^{-iz};\, q\big)_\infty\\
			&= C \prod_{i=1}^m \big(c_ie^{iz},\, c_ie^{-iz};\, q\big)_\infty
\big((q/c_i)e^{iz},\, (q/c_i)e^{-iz};\, q\big)_\infty.
			\end{split}
	\end{equation}			
\end{theorem}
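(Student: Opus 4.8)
The plan is to mimic, almost verbatim, the proof of Theorem~\ref{T:Kernel-I}, replacing single factors $\phi_\infty(x;\,a)\,\phi_\infty(x;\,q/a)$ by $m$-fold products and then invoking the structure theorem, Theorem~\ref{T:ker-mero}. Assuming the left-hand side of (\ref{E:kernel_identity_II}) is not identically zero (otherwise take $C=0$), the idea is to divide it by a suitably chosen fixed product so that the quotient lands inside $\ker\mathcal{D}_q$, and then to read off the shape of the quotient from Theorem~\ref{T:ker-mero}.

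First I would record that $\ker\mathcal{D}_q$ is closed under products and quotients: by (\ref{E:AW-operator-2}) the condition $\mathcal{D}_qg\equiv0$ is equivalent to $\breve g(q^{1/2}z)\equiv\breve g(q^{-1/2}z)$, that is, to $\breve g(qz)\equiv\breve g(z)$, and this invariance is obviously preserved under multiplying and dividing the transformed functions $\breve{\,\cdot\,}$. Together with (\ref{E:kernel_representative}) this shows that for \emph{any} auxiliary complex numbers $d_1,\dots,d_m$ each summand
\[
 C_j\prod_{i=1}^{m} f_{a_{ij},d_i}(x)=C_j\prod_{i=1}^{m}\frac{\phi_\infty(x;\,a_{ij})\,\phi_\infty(x;\,q/a_{ij})}{\phi_\infty(x;\,d_i)\,\phi_\infty(x;\,q/d_i)}
\]
lies in $\ker\mathcal{D}_q$, hence so does their sum by linearity. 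Now choose $d_1,\dots,d_m$ lying in $m$ pairwise inequivalent classes (for the equivalence relation used in the proof of Theorem~\ref{T:ker-mero}) and satisfying $d_i^2\neq q^n$ for all $n\in\mathbb{Z}$; all but countably many choices work, and this ensures that $D(x):=\prod_{i=1}^{m}\phi_\infty(x;\,d_i)\,\phi_\infty(x;\,q/d_i)$ is entire with exactly the $m$ distinct, simple zero-classes $\{d_1\},\dots,\{d_m\}$. Put
\[
 f(x)=\frac{\sum_{j=1}^{k}C_j\prod_{i=1}^{m}\phi_\infty(x;\,a_{ij})\,\phi_\infty(x;\,q/a_{ij})}{D(x)}.
\]
By the above $\mathcal{D}_qf\equiv0$; and since the numerator is entire, the poles of $f$ lie among $\{d_1\},\dots,\{d_m\}$, each with multiplicity at most one. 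Theorem~\ref{T:ker-mero} then yields an integer $0\le m'\le m$, constants $c_1,\dots,c_{m'}$ and $C_0$, and classes $\{d_1'\},\dots,\{d_{m'}'\}\subseteq\{\{d_1\},\dots,\{d_m\}\}$ (the pole-classes of $f$, listed with multiplicity) with
\[
 f(x)=C_0\prod_{\ell=1}^{m'}\frac{\phi_\infty(x;\,c_\ell)\,\phi_\infty(x;\,q/c_\ell)}{\phi_\infty(x;\,d_\ell')\,\phi_\infty(x;\,q/d_\ell')}.
\]

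Finally I would clear denominators: multiply both displayed formulas for $f$ by $D(x)$. On the left this returns the original sum $\sum_{j}C_j\prod_i\phi_\infty(x;\,a_{ij})\,\phi_\infty(x;\,q/a_{ij})$. On the right the indices $i$ split into the $m'$ for which $\{d_i\}$ equals some $\{d_\ell'\}$ and the remaining $m-m'$; for the former, $\phi_\infty(x;\,d_i)\,\phi_\infty(x;\,q/d_i)$ and the denominator $\phi_\infty(x;\,d_\ell')\,\phi_\infty(x;\,q/d_\ell')$ share one simple zero-class, so their ratio is a zero-free entire element of $\ker\mathcal{D}_q$, hence a constant by Theorem~\ref{T:ker-entire}; for the latter the factors $\phi_\infty(x;\,d_i)\,\phi_\infty(x;\,q/d_i)$ simply remain. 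Absorbing every constant into one $C$ and relabelling the resulting $m'+(m-m')=m$ parameters as $c_1,\dots,c_m$ gives (\ref{E:kernel_identity_II}), and rewriting the $\phi_\infty$'s in $q$-shifted-factorial notation gives (\ref{E:new_identity_II}). The only genuine obstacle is this last bookkeeping step: one must pin down the auxiliary parameters so that $D$ contributes exactly $m$ \emph{simple} zero-classes, and then check that the cancellation that can occur inside the sum — which lowers $m'$ below $m$ — is compensated exactly by the surviving factors of $D$, so that precisely $m$ products $\phi_\infty(x;\,c_i)\,\phi_\infty(x;\,q/c_i)$ appear on the right; everything else is the verbatim $m$-fold analogue of the proof of Theorem~\ref{T:Kernel-I}.
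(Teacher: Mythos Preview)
Your proof is correct and follows precisely the approach the paper intends; the paper itself omits the proof, writing only ``Similarly we obtain the following extension but we omit its proof.'' Your argument is the natural $m$-fold analogue of the proof of Theorem~\ref{T:Kernel-I}: divide by a fixed product $D(x)$ carrying $m$ simple, pairwise inequivalent zero-classes, land in $\ker\mathcal{D}_q$, invoke Theorem~\ref{T:ker-mero}, and clear denominators. Your explicit treatment of the case $m'<m$ (when cancellation in the numerator kills some of the $d_i$-poles, so that the surviving factors of $D$ supply the missing $c_i$'s) is in fact more careful than the paper's own proof of Theorem~\ref{T:Kernel-I}, where the analogous possibility that the auxiliary $d$ fails to be a pole of $f$ is not addressed explicitly.
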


Let  us write $q=e^{i\pi \tau}$ where $\Im (\tau)>0$. Hence  $|q|<1$. Identifying the theta functions in the notation of infinite $q-$product with \cite[pp. 469--473]{Whittaker:Watson1927}:
	\begin{itemize}
		\item
			\begin{equation}\label{E:theta_4}
				\vartheta_4(z,\, q)=(q^2;\, q^2)_\infty\, (q\,e^{2iz},\, q^2)_\infty\,  (q\, e^{-2iz},\, q^2)_\infty,
			\end{equation}
		\item	
			\begin{equation}\label{E:theta_3}
				\vartheta_3(z) 
			=\vartheta_4(z+\frac{\pi}{2})
			=(q^2;\, q^2)_\infty \, (q\,e^{2iz+i\pi},\, q\,e^{-2iz-i\pi};\,q^2)_\infty,
			\end{equation}
		\item	
			\begin{equation}\label{E:theta_1}
				\vartheta_1(z)/(-iq^{1/4}e^{iz})= \vartheta_4(z+\frac{\pi\tau}{2})
				= (q^2;\, q^2)_\infty \, (q^2\,e^{2iz};\,q^2)_\infty\,  (e^{-2iz};\,q^2)_\infty,
			\end{equation}
		and finally
		\item 
			\begin{equation}\label{E:theta_2}
				\vartheta_2(z)=\vartheta_1(z+\frac12\pi)
			=q^{\frac14}e^{iz}\,(q^2;\, q^2)_\infty\,(-q^2\,e^{2iz},\, -e^{-2iz};\, q^2)_\infty.
			\end{equation}
	\end{itemize}
Then it is straightforward to verify the theta identities \eqref{E:theta-identity-1}  corresponds to given  $k=2$, $a_1=q$, $a_2=-q^2$, $C_1=(q^2;\, q^2)_\infty^2\vartheta_4$, $C_2=q^{\frac12}(q^2;\, q^2)_\infty^2\vartheta_2$, then $b=-q$  and $C=(q^2;\, q^2)_\infty^2\vartheta_3$ from Theorem \ref{T:Kernel-I}.  Similarly, the identity \eqref{E:theta-identity-2} corresponds to given $k=2$, $a_1=q^2$, $a_2=-q^2$ and $q$ replaced by $q^2$,
\[
		C_1=[q^{1/2}(q^2;\, q^2)_\infty^2]^2(q^2\,e^{iy},\,  q^2\,  e^{-iy};\, q^2)_\infty\,
			 (q^2/q^2\,e^{iy},\, q^2/q^2\, e^{-iy}\, q^2)_\infty,
	\]
	\[
		C_2=[(q^2;\, q^2)_\infty^2]^2 (-q^2\,e^{iy},\; -q^2\,e^{-iy};\, q^2)_\infty\, \big(q^2/(-q^2)\,e^{iy},\; q^2/(-q^2)\,e^{-iy};\, q^2\big)_\infty,
	\]
then $C=[(q^2;\, q^2)_\infty^2]^2$ and $b=q\, e^{iy}$. We omit the detailed verification.
%%%%%%%%%%%%%%%%%%%%%%%%%%%%%%%%%%%%%%%%%%%%%%%%%%%%%%%%%%%%%%%%%%%%%%%%%5
%\end{example}

\section{Askey-Wilson type Five-value theorem}\label{S:unicity}

The above consideration allows us to obtain a variation of Nevanlinna's five values theorem for finite logarithmic order meromorphic functions. Nevanlinna showed in 1929 \cite[\S2.7]{Hayman1964} that if two arbitrary meromorphic functions share five values, that is, the pre-images  of the five points (ignoring their multiplicities) in $\mathbb{C}$ are equal, then the two functions must be identical. There has been numerous generalisations of this result, including those taking multiplicities into account. Halburd and Korhonen showed that there is a natural analogue of the five-value theorem for two \textit{finite order} meromorphic functions for the simple difference operator $\Delta f$ in \cite{HK-2}. We show below that there is also a natural extension for the five-value theorem for two \textit{finite logarithmic order} meromorphic functions with respect to the $\mathrm{AW-}$operator. Our definition for two functions sharing a value in Askey-Wilson appears to be different in spirit from that given in \cite{HK-2}.
\begin{definition} Let $f$ and $g$ be two meromorphic functions with finite logarithmic orders. Let $a\in \hat{\mathbb{C}}$. We write $E_f(a)$ to be the inverse image of $a$ under $f$, that is, it is the subset of $\mathbb{C}$ where $f(x)=a$. Then we say that $f$ and $g$ \textit{share the $\mathrm{AW-}$value} $a$ if  $E_f(a)=E_g(a)$ except perhaps on the subset of $\mathbb{C}$ such that
	\begin{equation}
		\label{E:share_count-1}
		\tilde{n}_{\mathrm{AW}}(r,\, f=a)-\tilde{n}_{\mathrm{AW}}(r,\, g=a)=O(1).
	\end{equation}
We can write the above statement in the equivalent form
	\begin{equation}
		\label{E:sharing}
		\awNf{r}{f=a}-\awNf{r}{g=a}=O(\log r).
	\end{equation}
\end{definition}
We recall from the \S\ref{S:main-results-II} that the definition (\ref{E:share_count-1}) means that 
	\[
		\sum_{|x|<r}\big(h_f(x)-k_f(x)\big)-\sum_{|x|<r}\big(h_g(x)-k_g(x)\big)=O(1),
	\]
where $k=\min\{h,\, k^\prime\}$. We note that the definition entails that two finite logarithmic order meromorphic functions share a $\mathrm{AW-}a$ value could be very different from two meromorphic functions share the value $a$ in the classical sense. If the pre-images of $a\in\mathbb{C}$  under $f$ and $g$ lie on a sequence defined by (\ref{E:AW-except}), then  $f$ and $g$ share $\mathrm{AW-}a$. On the other hand, there are many ways for which the $h_f(x)-k_f(x)$ and $h_g(x)-k_g(x)$ can behave that would lead to the upper bound stipulated in (\ref{E:sharing}). 
\begin{theorem}
\label{T:unicity} Let $f_i(z),\, i=1,\, 2$ be non-constant transcendental meromorphic functions of finite logarithmic orders (\ref{E:growth-rate-2}) such that $\mathcal{D}_qf_i\not\equiv 0\ (i=1,\, 2)$. Suppose that $f_i(z),\, i=1,\, 2$  share five distinct $\mathrm{AW-}$points $a_\nu,\, \nu=1,\, \cdots,\, 5$. Then $f_1\equiv f_2$.
\end{theorem}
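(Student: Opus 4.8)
The plan is to run Nevanlinna's classical proof of the five-value theorem, with the Askey--Wilson second main theorem (Theorem~\ref{T:2nd-Main-2}) in place of the classical one and the $\mathrm{AW}$-sharing relation (\ref{E:sharing}) in place of ordinary value sharing. Assume, for contradiction, that $f_1\not\equiv f_2$ and set $\Phi:=f_1-f_2$. Then $\Phi$ is a meromorphic function, not identically zero, and $T(r,\Phi)\le T(r,f_1)+T(r,f_2)+O(1)$, so $\Phi$ again has finite logarithmic order; at a common pole of $f_1,f_2$ one works instead with $\Psi:=1/f_1-1/f_2$, which is handled symmetrically.

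First I would apply Theorem~\ref{T:2nd-Main-2} to each $f_i$ with the five distinct points $a_1,\dots,a_5$; after absorbing the pole term $\awNf{r}{f_i}$ into $T(r,f_i)$ (it is at most $N(r,f_i)\le T(r,f_i)+O(1)$, and it coincides with one of the $\awNf{r}{f_i=a_\nu}$ when some $a_\nu=\infty$), this yields, for $i=1,2$ and every $\varepsilon>0$,
\begin{equation*}
	(3+o(1))\,T(r,f_i)\le \sum_{\nu=1}^5\awNf{r}{f_i=a_\nu}+S_{\log}(r,\varepsilon;f_i)
\end{equation*}
outside a set of finite logarithmic measure, where $S_{\log}(r,\varepsilon;f_i)=O((\log r)^{\sigma_{\log}-1+\varepsilon})+O(\log r)$. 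The second ingredient is that the shared $\mathrm{AW}$-values feed into the zeros of $\Phi$: at every point $x_0$ with $f_1(x_0)=f_2(x_0)=a_\nu$ (finite) the function $\Phi$ vanishes, and the point sets belonging to distinct $a_\nu$ are disjoint. Combining this with the sharing hypothesis, with the identity (\ref{E:der-1/f}) tying the vanishing order of $\mathcal{D}_qf(\hat x)$ at $\hat x$ to the configuration of $a_\nu$-points of $f$, and with the counting-shift estimates of Theorem~\ref{T:counting-est}, one should obtain
\begin{equation*}
	\sum_{\nu=1}^5\awNf{r}{f_1=a_\nu}\le N\!\Big(r,\tfrac1\Phi\Big)+O(\log r)\le T(r,f_1)+T(r,f_2)+O(\log r),
\end{equation*}
and, by (\ref{E:sharing}), the same bound with $f_1$ replaced by $f_2$ up to an extra $O(\log r)$.

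Adding the two instances of the second main theorem, using the sharing to replace $\sum_\nu\awNf{r}{f_2=a_\nu}$ by $\sum_\nu\awNf{r}{f_1=a_\nu}+O(\log r)$, and then inserting the displayed bound, gives
\begin{equation*}
	(3+o(1))\big(T(r,f_1)+T(r,f_2)\big)\le 2\big(T(r,f_1)+T(r,f_2)\big)+S_{\log}(r,\varepsilon;f_1)+S_{\log}(r,\varepsilon;f_2)+O(\log r),
\end{equation*}
hence $(1+o(1))\big(T(r,f_1)+T(r,f_2)\big)\le O((\log r)^{\sigma_{\log}-1+\varepsilon})+O(\log r)$ off a set of finite logarithmic measure. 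Since $f_1,f_2$ are non-constant of finite logarithmic order, their characteristics grow faster than the right-hand side for $\varepsilon$ small, along a suitable sequence $r\to\infty$ in the complement of the exceptional set; this is a contradiction, so $f_1\equiv f_2$.

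The hard part is the middle inequality $\sum_{\nu}\awNf{r}{f_1=a_\nu}\le N(r,1/\Phi)+O(\log r)$. Unlike the classical $\bar N$, the $\mathrm{AW}$-truncated counting function records $h-\min(h,k')$ at an $a$-point of multiplicity $h$, where $k'$ is the order of $\mathcal{D}_qf$ at $\hat x$; at a common $a_\nu$-point of $f_1$ and $f_2$ the zero of $\Phi$ has order only $\ge\min(h_1,h_2)$, so one must analyse the $\mathrm{AW}$-chains $\{\hat{\hat x},\dots\}$ of $a_\nu$-points of each $f_i$ and show, using (\ref{E:der-1/f}) and the sharing, that the chains of $f_1$ and $f_2$ align except on a set whose total contribution is $O(\log r)$ — at the interior points of a chain the truncation lowers the count just enough for the bound to hold. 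This bookkeeping, rather than the overall scheme, is where the real work lies; everything else is the classical comparison of growth rates.
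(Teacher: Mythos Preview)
Your proposal follows essentially the same route as the paper: apply the truncated second main theorem (Theorem~\ref{T:2nd-Main-2}) with $p=5$ to each $f_i$, absorb the pole term to obtain $(3+o(1))\,T(r,f_i)\le\sum_\nu\awNf{r}{f_i=a_\nu}+S_{\log}$, use the sharing hypothesis to identify the two sums up to $O(\log r)$, bound the common sum by the zero-counting of $f_1-f_2$, and hence by $T(r,f_1)+T(r,f_2)$, reaching a contradiction. The only cosmetic differences are that the paper bounds $\sum_\nu N_{12,\nu}(r)$ by $\awNf{r}{1/(f_1-f_2)}$ rather than your $N(r,1/\Phi)$ (yours is the weaker, hence easier, inequality and suffices equally well since $N\le T$), and that the paper phrases the contradiction as $\sum_\nu N_{12,\nu}\le\bigl(\tfrac23+o(1)\bigr)\sum_\nu N_{12,\nu}$ rather than your additive form; these are rearrangements of the same arithmetic. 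Your honest flagging of the ``middle inequality'' as the delicate step is apt: the paper itself asserts this comparison in one line, so your sketch of the chain-alignment bookkeeping is more, not less, than what the paper supplies.
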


\begin{proof} We denote $\sigma_{\log}$ to be the maximum of the logarithmic orders of $f_1,\, f_2$. We suppose on the contrary that the functions $f_1,\,f_2$ are not identically the same. According to the assumption, we shall assume that $E_{f_1}(a_\nu)\equiv E_{f_2}(a_\nu)$ except perhaps on those $x$ for which the (\ref{E:sharing}) holds with $\nu=1,\, \cdots,\, 5$.  Hence
	\[
	N_{{12},\,\nu}(r):=\awNf{r}{\frac{1}{f_1-a_\nu}}=\awNf{r}{\frac{1}{f_2-a_\nu}}+O(\log r),\quad \nu=1,\, \cdots,\, 5.
	\]

Choosing $p=5$ in (\ref{E:2nd-main-2}) yields
	\[
	4\, T(r,\, f_i)\le \awNf{r}{f_i}+\sum_{\nu=1}^5 N_{12,\, \nu}(r,\, f_i)+O(\log^{\sigma_{\log}-1+\varepsilon} r)+O(\log r),\quad i=1,\, 2,
	\]
and hence,
\be
	\label{E:share-2}
	3\, T(r,\, f_i)\le \sum_{\nu=1}^5 N_{12,\,\nu}(r)+O(\log^{\sigma_{\log}-1+\varepsilon} r)+O(\log r),\quad i=1,\, 2.
\ee
Since $f_1,\,f_2$ are not identical, so applications of \eqref{E:share-2} give
	\[
\begin{split}
	T\Big(r,\,(f_1-f_2)^{-1}\Big)&=T(r,\, f_1-f_2)+O(1)\\
	&\le T(r,\, f_1)+T(r,\, f_2)+O(1) \\
	&\le \frac23 \sum_{\nu=1}^5 N_{12,\,\nu}(r)+O(\log^{\sigma_{\log}-1+\varepsilon} r)+O(\log r).
\end{split}
	\]
Thus except for those $x$ for which the (\ref{E:sharing}) may hold with $a_\nu-$points ($\nu=1,\, \cdots, 5$), the zeros of $f_1-f_2$ satisfy
	\[
\begin{split}
	 \sum_{\nu=1}^5 N_{12,\,\nu}(r)&\le\awNf{r}{\frac{1}{f_1-f_2}}\le T\Big(r,\,(f_1-f_2)^{-1}\Big)\\
%	&=T(r,\, f_1-f_2)+O(1)\\
	&\le   \frac23\sum_{\nu=1}^5 N_{12,\,\nu}(r)+O(\log^{\sigma_{\log}-1+\varepsilon} r)+O(\log r).
\end{split}
	\]
Thus,
\begin{equation*}
	 \sum_{\nu=1}^5 N_{12,\,\nu}(r)=O(\log^{\sigma_{\log}-1+\varepsilon} r)+O(\log r).
\end{equation*}
Substitute the above equation into \eqref{E:share-2} yields
	\[
		T(r,\, f_i)= O(\log^{\sigma_{\log}-1+\varepsilon} r)+O(\log r)
	\]
which is impossible unless $f_1,\, f_2$ are rational functions. This is a contradiction.
\end{proof}

%\textcolor{blue}{\textbf{Does the number $4$ in the unicity theorem above the best possible?}}

\section{\text{Applications to difference equations}}
\label{E:equation}

Let 
	\begin{equation}
		\label{E:AW-poly}
			\begin{split}
			P_n(x)&=p_n(x;\, a,\, b,\, c,\, d\,|q):\\
				& =\frac{(ab,\, ac,\, ad,\,;q)_n}{a^n}\, {}_4\phi_3
			\Big(
				\begin{array}{cccc}
				q^{-n},& a\,b\,c\,d\,q^{n-1}, & a\,e^{i\theta}, & a\,e^{-i\theta}\\
				  \ \ \ a\,b,& a\,c, & a\,d\ \ \ 
				 \end{array} \Big|\, q;\, q\Big)
			\end{split}
	\end{equation}
be the $n-$th \textit{Askey-Wilson} polynomial, where we recall that $x=\cos\theta$. It is known from \cite[Theorem 2.2]{Askey:Wilson1985} that when $-1<q<1$, $a,\,\, b,\, c,\, d$ are real or appear in conjugate pairs, and that $\max\{|a|,\, |b|,\, |c|,\, |d|\}<1$, then the $\mathrm{AW}-$polynomials are orthogonal on $[-1,\, 1]$ with respect to the \textit{weight} function
\begin{equation}
	\label{E:weight}
		\begin{split}
			&\omega(x)=\omega(x;\, a,\, b,\, c,\, d\, |q): =\frac{{w}(x;\, a,\, b,\, c,\, d\, |q)}{\sqrt{1-x^2}}\\
			&=\frac{(e^{2i\theta},\, e^{-2i\theta};\, q)_\infty}{(a\,e^{i\theta},\, a\,e^{-i\theta};\, q)_\infty(b\,e^{i\theta},\, b\,e^{-i\theta};\, q)_\infty(c\,e^{i\theta},\, c\,e^{-i\theta};\, q)_\infty (d\,e^{i\theta},\, d\,e^{-i\theta};\, q)_\infty\sin\theta}.
		\end{split}
\end{equation}
Let 
\smallskip
	\begin{equation}
			\label{E:shifted-weight}
			\tilde{\omega}(x):=\omega(x;\, a\,q^{\frac12},\, b\,q^{\frac12},\, c\,q^{\frac12},\,  d\,q^{\frac12}\,|q),
		\end{equation}
be a shifted-weight function. Askey and Wilson showed  (see \cite[(5.16)]{Askey:Wilson1985}) that the $\mathrm{AW}-$polynomials are also eigen-solutions to the (self-adjoint) second-order difference equation
	\begin{equation}
		\label{E:AW-eqn}
			(1-q)^2\,\mathcal{D}_q\big[\tilde{\omega}(x)\,\mathcal{D}_q\,  y(x)\big]
			+\lambda_n \omega(x)\,y(x)=0
		\end{equation}
where $y(x)=p_n(x;\, a,\, b,\, c,\, d\,|q)$
		
		\[
			\lambda_n=4q^{-n+1}(1-q^n)(1-a\,b\,c\,d\,q^{n-1}),
		\]
are corresponding \textit{eigenvalues}. 
 We consider a self-adjoint type equation with a more general entire coefficient. Entire functions of zero-order have particularly simple Hadamard factorization. Littlewood \cite[\S14]{Littlewood1907} \footnote{The authors are grateful for the referee who pointed out this information.} gave a detailed but lengthy analysis of asymptotic behaviour of  $q-$infinite products. We instead derive a less accurate estimate but with shorter argument based on Bergweiler and Hayman \cite[Lemma 3]{Bergweiler:Hayman2003} for the Jacobi theta function $\vartheta_4(z;\, q)$ (see \cite[p. 469]{Whittaker:Watson1927})
 in the punctured plane $\mathbb{C}^\ast=\mathbb{C}\backslash \{0\}$ away from the zeros when considered as the function of $z$.   If $x=\cos\theta=(z+1/z)/2$, then in our notation, their theta function \cite[(4.5)]{Bergweiler:Hayman2003} is represented as
	\[
		(q^2,\, qe^{i\theta},\, qe^{-i\theta}; q^2)_\infty.
	\]
We modify their argument to suit the notation we use for our infinite products which allow for an extra non-zero parameter $a$. Unlike the restriction that $q$ is required to be real and $-1<q<1$ in \cite[Theorem 2.2]{Askey:Wilson1985}, we allow our $q$ to be complex.

\begin{lemma} \label{L:asymptotic} Suppose $a\in\mathbb{C}\backslash\{0\}$, $x=\cos\theta=\frac12(z+z^{-1})$, and
	
%\smallskip
	\begin{equation*}
%		\label{E:product}
			f(x)=(ae^{i\theta},\, ae^{-i\theta}; q)_\infty.
	\end{equation*}
Let  $|z|>\max\{|aq^{-\frac12}|,\, |a^{-1}q^\frac12|\}$ , $\nu\in\mathbb{N}$ and $\tau\in [0,\, 1)$ be two numbers (both depend on $z$) that satisfy

	\begin{equation}
		|az|=|q|^{\frac32-\tau-\nu}.
	\end{equation}
\noindent Then we have,
	\begin{equation}
		\label{E:theta-asymptotic}
%			\begin{split}
		\log |f(x)|=\frac{(\log |az|)^2}{-2\log |q|}+\frac12\log (|az|) +\log |1-a q^{\nu-1}z|
+O(1)
%			\end{split}
	\end{equation}
as $x\to\infty$ and hence $z\to\infty$. 
\end{lemma}

\begin{proof} We  write
	\begin{equation}
		\begin{split}
		\label{E:log-sum}
			\log \big|(ae^{i\theta},\, ae^{-i\theta}; q)_\infty\big|&=
			\sum_{k=1}^\infty \log\big|(1-aq^{k-1}z)(1-aq^{k-1}/z)\big|\\
			&=S_1+S_2+S_3 +\log|1-a q^{v-1}z|\\
		\end{split}
	\end{equation}
where
	\[
		S_1=\sum_{k=1}^{\nu-1}  \log\big|(1-aq^{k-1}z)\big|,\qquad
		S_2=\sum_{k=\nu+1}^\infty  \log\big|(1-aq^{k-1}z)\big|,
	\]
and
	\[
		S_3=\sum_{k=1}^{\infty}  \log\big|(1-aq^{k-1}/z)\big|.
	\]
We first consider
	\begin{equation}
		\begin{split}
			\label{E:split-1}
		S_1 &=\sum_{k=1}^{\nu-1} \log|aq^{k-1}z| + \sum_{k=1}^{\nu-1} \log\Big|1-\frac{1}{aq^{k-1}z}\Big|\\
			&= (\nu-1)\log |az|+(\nu-1)(\nu/2-1) \log|q| +\sum_{k=1}^{\nu-1} \log\Big|1-\frac{1}{aq^{k-1}z}\Big|.
		\end{split}
	\end{equation} 
	
Since $k\le \nu-1$, so that $\nu-k\ge 1$, we have
		\[
			\big|1/({aq^{k-1}z})\big|=|q|^{-\frac32+\nu+\tau+(1-k)}\le |q|^{\nu-k-1/2}\le |q|^{\frac12}<1.
		\]
So\footnote{Since $|\log (1+z)|<\log\frac{1}{1-|z|}<\frac{|z|}{1-|z|},\ (0<|z|<1)$, see e.g., \cite[p. 68, (4.1.34) and (4.1.38)]{Abramowitz:Stegun1964}.}
		\begin{equation}
			\label{E:log-1}
			\Bigg|\log\Big|1-\frac{1}{aq^{k-1}z}\Big|\Bigg|<\log \frac{1}{1-|q|^{\nu-k-\frac12}}
			<\frac{|q|^{\nu-k-\frac12}}{1-|q|^{\nu-k-\frac12}}
			<\frac{|q|^{\nu-k}}{|q|^\frac12(1-|q|^\frac12)}.
		\end{equation}
Hence
		\begin{equation}
			\label{E:log-2}
				\begin{split}
			\sum_{k=1}^{\nu-1}\Bigg|\log\Big|1-\frac{1}{aq^{k-1}z}\Big|\Bigg|
			&<\sum_{k=1}^{\nu-1}\frac{|q|^{\nu-k}}{|q|^\frac12(1-|q|^\frac12)}
			<\frac{1}{|q|^\frac12(1-|q|^\frac12)}\sum_{j=1}^\infty{|q|^j}\\
			&=\frac{|q|^\frac12}{(1-|q|^\frac12)(1-|q|)}.
				\end{split}
		\end{equation}
We deduce that
		\begin{equation}
			\label{E:log-0}
			\big|S_1-(\nu-1)\log |az|+(\nu-1)(\nu/2-1) \log|q|\big|<\frac{|q|^\frac12}{(1-|q|^\frac12)(1-|q|)}.
		\end{equation}
Now let us compute $S_2$. Since $k\ge \nu+1$ and $\tau\in [0,\, 1)$, so 
	\[
		|aq^{k-1}z|=|q|^{\frac12+k-\nu-\tau}
< |q|^{k-\nu-\frac12} \le|q|^{1/2}<1.
	\]
Hence
	\begin{equation}
%			\label{E:log-3}
		\Big|\log\big|1-aq^{k-1}z\big|\Big|\le \log\frac{1}{1-|q|^{k-\nu-\frac12}}
		\le \frac{|q|^{k-\nu-\frac12}}{1-|q|^{k-\nu-\frac12}}
		<\frac{|q|^{k-\nu}}{|q|^\frac12(1-|q|^{\frac12})}.
	\end{equation}
We deduce 
	\begin{equation}
			\label{E:log-3}
		\begin{split}
		|S_2|&\le \frac{1}{|q|^\frac12(1-|q|^{1/2})}\sum_{k=\nu+1}^\infty |q|^{k-\nu}=
		 \frac{1}{|q|^\frac12(1-|q|^{1/2})}\Big(\sum_{j=1}^\infty |q|^{j}\Big)\\
		&= \frac{|q|^\frac12}{(1-|q|)(1-|q|^{1/2})}.
		\end{split}
	\end{equation}
It remains to estimate $S_3$. According to our assumption $|z|>|aq^{-\frac12}|$ that 		
		\[
			|aq^{k-1}/z|< |q|^{k-\frac12}\le |q|^\frac12<1
		\]
so that, we can invoke a similar argument used to estimate (\ref{E:log-1}--\ref{E:log-3}) to derive
	\begin{equation}
			\label{E:log-4}
		\begin{split}
		|S_3| &\le \sum_{k=1}^\infty\Bigg|\log\Big| 1-\frac{aq^{k-1}}{z}\Big|\Bigg|
		\le  \sum_{k=1}^\infty\log \frac{1}{1-\big|aq^{k-1}/z\big|}
		 < \sum_{k=1}^\infty\log \frac{1}{1-|q|^{k-\frac12}}\\
		 &< \sum_{k=1}^\infty \frac{|q|^{k-\frac12}}{1-|q|^{k-\frac12}}< \frac{1}{1-|q|^\frac12}
		\sum_{k=1}^\infty |q|^{k-\frac12}=\frac{|q|^\frac12}{(1-|q|)(1-|q|^\frac12)}.
		\end{split}
	\end{equation}
 Combining the estimates (\ref {E:split-1}), (\ref{E:log-0}), (\ref{E:log-3}) and (\ref{E:log-4}) yields the estimate (\ref{E:theta-asymptotic}) as required. 
\end{proof}

%\begin{remark} We note that the $\log\mathrm{meas}\,(E)\le \frac{\pi^2}{6}\log\frac1q$.
%\end{remark}
%\smallskip

\begin{lemma}
	\label{L:weight-estimate}
		Let $\omega (x)$ and $\tilde{\omega}(x)$ be as defined in $\mathrm{(\ref{E:weight})}$ and $\mathrm{(\ref{E:shifted-weight})}$ respectively. Then we have
			\begin{equation}
				\label{E:weight-estimate}
					m\Big(r,\, \frac{\tilde{\omega}}{\omega}\Big)=O(\log r),\quad |x|=r.
			\end{equation}
\end{lemma}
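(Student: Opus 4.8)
The plan is to estimate $m\big(r,\,\tilde\omega/\omega\big)$ by writing the ratio $\tilde\omega/\omega$ explicitly as a finite product of ratios of the basic infinite products $\phi_\infty(x;\,\cdot)$ and then invoking the asymptotic estimate of Lemma \ref{L:asymptotic}. From the definitions \eqref{E:weight} and \eqref{E:shifted-weight}, almost everything cancels: the factor $(e^{2i\theta},\,e^{-2i\theta};\,q)_\infty/\sin\theta$ is common to $\omega$ and $\tilde\omega$, so that
\[
	\frac{\tilde\omega(x)}{\omega(x)}
	=\prod_{u\in\{a,b,c,d\}}
	\frac{(ue^{i\theta},\,ue^{-i\theta};\,q)_\infty}{(uq^{1/2}e^{i\theta},\,uq^{1/2}e^{-i\theta};\,q)_\infty}
	=\prod_{u\in\{a,b,c,d\}}\frac{\phi_\infty(x;\,u)}{\phi_\infty(x;\,uq^{1/2})}.
\]
Thus it suffices to bound $m\big(r,\,\phi_\infty(x;\,u)/\phi_\infty(x;\,uq^{1/2})\big)$ for a single nonzero parameter $u$, and then sum the four contributions; the result we want follows if each such term is $O(\log r)$.

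First I would fix a nonzero $u$ and apply Lemma \ref{L:asymptotic} to both $f_1(x)=\phi_\infty(x;\,u)$ and $f_2(x)=\phi_\infty(x;\,uq^{1/2})$ for $|z|$ large. Writing $w=uz$ (resp.\ $w=uq^{1/2}z$), Lemma \ref{L:asymptotic} gives
\[
	\log|f_i(x)| = \frac{(\log|w_i|)^2}{-2\log|q|}+\tfrac12\log|w_i| + \log|1-w_i q^{\nu_i-1}| + O(1),
\]
where $\nu_i,\tau_i$ are the integer/fractional data attached to $|w_i|$. Since $\log|uq^{1/2}z|=\log|uz|+\tfrac12\log|q|$, the leading quadratic terms differ by
\[
	\frac{(\log|uz|+\tfrac12\log|q|)^2-(\log|uz|)^2}{-2\log|q|}
	= -\tfrac12\log|uz| - \tfrac18\log|q|,
\]
which exactly cancels the difference $\tfrac12\log|uq^{1/2}z|-\tfrac12\log|uz|=\tfrac14\log|q|$ up to a bounded term, leaving
\[
	\log\Big|\frac{\phi_\infty(x;\,u)}{\phi_\infty(x;\,uq^{1/2})}\Big|
	= \log|1-uq^{\nu_1-1}z| - \log|1-uq^{\nu_2-1}q^{1/2}z| + O(1).
\]
Each of the two surviving logarithmic terms is of the form $\log|1-cz|$ with $|cz|=|q|^{-1/2-\tau}$, hence bounded above by $O(1)$; the difficulty is the lower bound, i.e. the possibility that $z$ lies near a zero of one of the infinite products, where $\log|1-cz|$ is very negative. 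This is precisely why the statement is about the \emph{proximity function} $m(r,\cdot)$ rather than a pointwise bound: taking $\log^+$ of the ratio kills the negative contributions, while taking $\log^+$ of the reciprocal ratio is handled symmetrically (swap the roles of $u$ and $uq^{1/2}$). So the main obstacle — controlling $z$ near zeros — is dissolved by the fact that $m(r,g)=\tfrac1{2\pi}\int_0^{2\pi}\log^+|g(re^{i\phi})|\,d\phi$ and $m(r,g)+m(r,1/g)=T(r,g)+O(1)$ reduce everything to integrating $\log^+$ of the \emph{upper} bound above, which is $O(1)+O(\log r)$ once the $O(1)$ in Lemma \ref{L:asymptotic} is understood to be uniform over $|z|=$ const.

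To make the integration rigorous I would argue as follows. For $|x|=r$ large, the corresponding $z=z(re^{i\phi})$ satisfies $|z|\asymp r$ uniformly in $\phi$ (by the branch choice $z=x+\sqrt{x^2-1}$, $\sqrt{x^2-1}\approx x$), so $\log|uz|=\log r+O(1)$ uniformly. Feeding this into the displayed cancellation, $\log^+\big|\tilde\omega/\omega\big|$ on $|x|=r$ is bounded by a sum of four terms each of the shape $\log^+|1-c\,z| + O(1) \le \log^+(1+|c||z|)+O(1) = O(\log r)$, uniformly in $\phi$. Integrating over $\phi\in[0,2\pi]$ keeps the bound $O(\log r)$, and the same applies to $m(r,\omega/\tilde\omega)$ by symmetry; in particular $m(r,\tilde\omega/\omega)=O(\log r)$, which is the assertion. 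The only point requiring care is that the error term $O(1)$ in Lemma \ref{L:asymptotic} is uniform for $|z|$ on a circle of fixed (large) radius — which is clear from the proof of that lemma, where all the tail sums $S_1,S_2,S_3$ were bounded by absolute constants depending only on $|q|$ — so no exceptional set is needed here.
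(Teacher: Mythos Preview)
Your reduction to the four ratios $\phi_\infty(x;u)/\phi_\infty(x;uq^{1/2})$ and your use of Lemma~\ref{L:asymptotic} are the same as in the paper, and the shape of the difference you obtain is correct (though note a minor slip: the quadratic and linear pieces do \emph{not} cancel to $O(1)$; your displayed quantity $-\tfrac12\log|uz|-\tfrac18\log|q|$ is unbounded and is certainly not cancelled by $\tfrac14\log|q|$, so the remainder is $\tfrac12\log|uz|+O(1)=O(\log r)$, exactly as the paper records). This does not matter for the target bound.

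The genuine gap is in your treatment of the singular term. After the cancellation you have
\[
\log\Big|\frac{\phi_\infty(x;u)}{\phi_\infty(x;uq^{1/2})}\Big|
=\log|1-c_1 z|-\log|1-c_2 z|+O(\log r),
\]
and you then assert that ``taking $\log^+$ of the ratio kills the negative contributions''. This is false: the term $-\log|1-c_2 z|$ enters with a \emph{minus} sign, so when $z$ is close to $c_2^{-1}$ (equivalently, when $x$ is close to a zero of $\phi_\infty(\cdot;uq^{1/2})$) it is large \emph{positive}, and contributes directly to $\log^+$ of the ratio. Your final paragraph, which bounds $\log^+|\tilde\omega/\omega|$ pointwise by $\log^+|1-cz|+O(1)$, has silently dropped exactly this unbounded pole contribution; the symmetry remark about swapping $u$ and $uq^{1/2}$ only moves the problem to the other proximity function and does not make it disappear. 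What is actually needed is to show that the \emph{integral} of $\big|\log|1-c_j z(re^{i\phi})|\big|$ over $\phi\in[0,2\pi]$ is $O(1)$. The paper does this by first applying Lemma~\ref{L:lemma-2} with $\alpha=\tfrac12$ to get $\big|\log|1-c_j z|\big|=O\big(r^{1/2}/|x-w_j|^{1/2}\big)+O(1)$ for a suitable $w_j$ (there are at most two relevant $\nu_j$ as $\phi$ varies), and then invoking Lemma~\ref{L:lemma-3} to integrate $|re^{i\phi}-w_j|^{-1/2}$, which yields $O(r^{-1/2})$ and hence an $O(1)$ contribution. Inserting this step in place of your ``$\log^+$ kills it'' sentence would complete the argument.
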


\medskip

\begin{proof} It is elementary that
	\begin{equation}
		\label{E:weight-ratio}
		\begin{split}
		m\Big(r,\, \frac{\tilde{\omega}}{\omega}\Big)
		&\le m\Big(r,\, \frac{(aq^{\frac12}e^{i\theta},\, aq^{\frac12}e^{-i\theta}; q)_\infty}{(ae^{i\theta},\, ae^{-i\theta}; q)_\infty}\Big)
		+ m\Big(r,\, \frac{(bq^{\frac12}e^{i\theta},\, bq^{\frac12}e^{-i\theta}; q)_\infty}{(be^{i\theta},\, be^{-i\theta}; q)_\infty}\Big)\\
		&\quad+
		 m\Big(r,\, \frac{(cq^{\frac12}e^{i\theta},\, cq^{\frac12}e^{-i\theta}; q)_\infty}{(ce^{i\theta},\, ce^{-i\theta}; q)_\infty}\Big)
		+ m\Big(r,\, \frac{(dq^{\frac12}e^{i\theta},\, dq^{\frac12}e^{-i\theta}; q)_\infty}{(de^{i\theta},\, de^{-i\theta}; q)_\infty}\Big).
		\end{split}
	\end{equation}
Without loss of generality, we consider $m\big(r,\, {(aq^{\frac12}z,\, aq^{\frac12}/z; q)_\infty}/{(az,\, a/z; q)_\infty}\big)$. 
Since $|z|=2|x|+o(1)$ as $|x|=r\to\infty$, we have by
Lemma \ref{L:asymptotic}

	\begin{equation}
		\log\Big| \frac{(aq^{\frac12}z,\, aq^{\frac12}/z; q)_\infty}{(az,\, a/z; q)_\infty}\Big|
		=-\log |1-a q^{\nu_1-1}z|+\log |1-a q^{\nu_2-\frac12}z| +O(\log r),
	\end{equation}
with $|az|=|q|^{\frac32-\tau_1-\nu_1},|aq^{\frac12}z|=|q|^{\frac32-\tau_2-\nu_2}$.
By Lemma 4.1 with $\alpha=\frac12$, we have 
\begin{equation}
		\log |1-a q^{\nu_1-1}z|=O\Big(|a q^{\nu_1-1}z|^{\frac12}+\Big|\frac{z}{z-a^{-1} q^{1-\nu_1}}\Big|^{\frac12}\Big)=O\Big(\frac{r^{\frac12}}{|z-a^{-1} q^{1-\nu_1}|^{\frac12}}\Big)+O(1).
	\end{equation}
On the other hand,
\begin{equation}
|x-(a^{-1}q^{1-\nu_1}+a q^{\nu_1-1})/2|=\frac12|z-a^{-1} q^{1-\nu_1}||1-aq^{\nu_1-1}z^{-1}|=O\big(|z-a^{-1} q^{1-\nu_1}|\big).
\end{equation}
Then 
\begin{equation}
		\log |1-a q^{\nu_1-1}z|=O\Big(\frac{r^{\frac12}}{|x-(a^{-1}q^{1-\nu_1}+a q^{\nu_1-1})/2|^{\frac12}}\Big)+O(1).
	\end{equation}
Similarly we have 
\begin{equation}
		\log |1-a q^{\nu_2-\frac12}z|=O\Big(\frac{r^{\frac12}}{|x-(a^{-1}q^{\frac12-\nu_2}+a q^{\nu_2-\frac12})/2|^{\frac12}}\Big)+O(1).
	\end{equation}
Substitute (12.22) and (12.23) into (12.19) yields
\begin{equation}
	\begin{split}
		&\log\Big| \frac{(aq^{\frac12}z,\, aq^{\frac12}/z; q)_\infty}{(az,\, a/z; q)_\infty}\Big|\\
		&=O\Big(\frac{1}{|x-(a^{-1}q^{1-\nu_1}+a q^{\nu_1-1})/2|^{\frac12}}+\frac{1}{|x-(a^{-1}q^{\frac12-\nu_2}+a q^{\nu_2-\frac12})/2|^{\frac12}}\Big)\, r^{\frac12} +O(\log r),
		\end{split}
	\end{equation}
	
Let $r$ be large enough and fixed, $x=re^{i\phi}$. We note that when $\phi$ varies in $[0,\, 2\pi]$, $|z|$ makes a corresponding small change with $|z|=2r+o(1)$. This may result in the change of integers $\nu_1$ and $\nu_2$ but each of them can assume at most two consecutive integer values. Thus
\begin{equation}
		m\Big(r,\, \frac{(aq^{\frac12}z,\, aq^{\frac12}/z; q)_\infty}{(az,\, a/z; q)_\infty}\Big)
		=O\Big(\sum_{j=1}^4 \int_0^{2\pi} \frac{r^{\frac12}}{|re^{i\phi}-w_j|^\frac12}\,d\phi\Big)+O(\log r),
	\end{equation}
where $w_{1,\,2}=(a^{-1}q^{1-\nu_{1}}+a q^{\nu_{1}-1})/2$, $w_{3,\, 4}=(a^{-1}q^{\frac12-\nu_{2}}+a q^{\nu_{2}-\frac12})/2$. 
Hence, we deduce from  \eqref{E:lemma-3} that
	\begin{equation}
		m\Big(r,\, \frac{(aq^{\frac12}z,\, aq^{\frac12}/z; q)_\infty}{(az,\, a/z; q)_\infty}\Big)
		=O(\log r).
	\end{equation}
Similarly we can repeat the above argument to the remaining three terms in (\ref{E:weight-ratio}). This completes the proof.
\end{proof}

\begin{theorem} \label{T:equation} Let $A(x)$ be an entire function of finite logarithmic order $\sigma_{\log}(A)> 1$. Suppose that $f$ is an  entire solution to the second-order difference equation
\smallskip

	\begin{equation}
			\mathcal{D}_q\big[\tilde{\omega}(x)\,\mathcal{D}_q\,  y(x)\big]
			+\omega(x) A(x)\,y(x)=0,
		\end{equation}
where the $\omega$ and $\tilde{\omega}$ are defined in \eqref{E:weight} and \eqref{E:shifted-weight} respectively. 
Then $\sigma_{\log}(f)\ge \sigma_{\log}(A)+1$.
\end{theorem}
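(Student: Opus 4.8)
The plan is to recover $A$ algebraically from the difference equation and to control its Nevanlinna characteristic by the $\mathrm{AW}$--logarithmic difference lemma. Since $f\not\equiv 0$ we may write
\begin{equation}
A(x)=-\frac{\mathcal{D}_q\big[\tilde{\omega}(x)\,\mathcal{D}_q f(x)\big]}{\omega(x)\,f(x)}.
\end{equation}
Put $g:=\tilde{\omega}\,\mathcal{D}_q f$, a meromorphic function. If $\mathcal{D}_q f\equiv 0$ then $g\equiv 0$ and the equation forces $\omega A f\equiv 0$, hence $A\equiv 0$, contradicting $\sigma_{\log}(A)>1$; likewise if $\mathcal{D}_q g\equiv 0$ then $\mathcal{D}_q[\tilde{\omega}\mathcal{D}_q f]\equiv 0$ and again $A\equiv 0$. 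So $\mathcal{D}_q f\not\equiv 0$ and $\mathcal{D}_q g\not\equiv 0$, and since $\mathcal{D}_q g=\mathcal{D}_q[\tilde{\omega}\mathcal{D}_q f]$ we may factor
\begin{equation}
A=-\frac{\mathcal{D}_q g}{g}\cdot\frac{\tilde{\omega}}{\omega}\cdot\frac{\mathcal{D}_q f}{f}.
\end{equation}
Because $A$ is entire, $T(r,A)=m(r,A)$, and $\log^+$ of a product is at most the sum of the $\log^+$'s, so
\begin{equation}
T(r,A)\le m\Big(r,\frac{\mathcal{D}_q g}{g}\Big)+m\Big(r,\frac{\tilde{\omega}}{\omega}\Big)+m\Big(r,\frac{\mathcal{D}_q f}{f}\Big).
\end{equation}

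Next I would record the growth of the three pieces. Lemma \ref{L:weight-estimate} gives $m(r,\tilde{\omega}/\omega)=O(\log r)$. Every $q$--shifted factorial appearing in $\omega$ and $\tilde{\omega}$ has $\log M(r,\cdot)\asymp(\log r)^{2}$ by Lemma \ref{L:asymptotic} (the $1/\sqrt{1-x^{2}}$ factor being of order zero and negligible), so $\sigma_{\log}(\omega)=\sigma_{\log}(\tilde{\omega})=2$. By Theorem \ref{T:simple-order}, $\sigma_{\log}(\mathcal{D}_q f)\le\sigma_{\log}(f)$; and by subadditivity of the characteristic, $\sigma_{\log}(g)\le\max\{\sigma_{\log}(\tilde{\omega}),\sigma_{\log}(\mathcal{D}_q f)\}\le\max\{2,\sigma_{\log}(f)\}$. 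The exceptional set of finite logarithmic measure produced by Theorem \ref{T:log-lemma} will be discharged at the end: since $T(r,A)$ is increasing and a set of finite logarithmic measure cannot contain a whole interval $[r,r^{2}]$ for large $r$, a bound $T(r,A)=O((\log r)^{\mu})$ valid off the exceptional set extends to all large $r$ with the same exponent $\mu$.

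First I would show $\sigma_{\log}(f)\ge 2$. If not, then $\sigma_{\log}(g)\le 2$ and $\sigma_{\log}(f)-1<1$, so Theorem \ref{T:log-lemma} applied to $g$ and to $f$ yields $m(r,\mathcal{D}_q g/g)+m(r,\mathcal{D}_q f/f)=O((\log r)^{1+\varepsilon})$ off an exceptional set; together with $m(r,\tilde{\omega}/\omega)=O(\log r)$ this gives $T(r,A)=O((\log r)^{1+\varepsilon})$ for all large $r$, whence $\sigma_{\log}(A)\le 1$, contradicting the hypothesis $\sigma_{\log}(A)>1$. Hence $\sigma_{\log}(f)\ge 2$, so $\sigma_{\log}(g)\le\sigma_{\log}(f)$ and $\sigma_{\log}(f)-1\ge 1$. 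Now Theorem \ref{T:log-lemma} gives $m(r,\mathcal{D}_q g/g)+m(r,\mathcal{D}_q f/f)=O((\log r)^{\sigma_{\log}(f)-1+\varepsilon})$, and the $O(\log r)$ from Lemma \ref{L:weight-estimate} is absorbed, so $T(r,A)=O((\log r)^{\sigma_{\log}(f)-1+\varepsilon})$ for all large $r$. Therefore $\sigma_{\log}(A)\le\sigma_{\log}(f)-1+\varepsilon$ for every $\varepsilon>0$, i.e. $\sigma_{\log}(f)\ge\sigma_{\log}(A)+1$, which is the assertion.

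I expect the main obstacle to be bookkeeping rather than any single deep estimate: confirming $\sigma_{\log}(\omega)=\sigma_{\log}(\tilde{\omega})=2$ so that the logarithmic difference lemma may be legitimately applied to the auxiliary meromorphic function $g=\tilde{\omega}\,\mathcal{D}_q f$, cleanly bounding $\sigma_{\log}(g)$, and running the preliminary case distinction that forces $\sigma_{\log}(f)\ge 2$ before the main estimate becomes useful; the logarithmic-measure exceptional sets are routine to remove by monotonicity of $T(r,A)$.
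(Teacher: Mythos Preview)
Your argument is correct and is essentially the paper's own proof: define the auxiliary function $g=\tilde{\omega}\,\mathcal{D}_q f$ (the paper calls it $F$), factor $A=-(\mathcal{D}_q g/g)(\tilde{\omega}/\omega)(\mathcal{D}_q f/f)$, and bound $m(r,A)$ by Theorem~\ref{T:log-lemma} applied to $f$ and $g$ together with Lemma~\ref{L:weight-estimate}. Your treatment is in fact a bit more careful than the paper's: you verify $\mathcal{D}_q f\not\equiv 0$ and $\mathcal{D}_q g\not\equiv 0$ before invoking the logarithmic difference lemma, and you explain how to discharge the exceptional set of finite logarithmic measure via monotonicity of $T(r,A)$, whereas the paper simply writes the final exponent as $\max\{1,\sigma_{\log}(f)-1\}+\varepsilon$ and concludes from $\sigma_{\log}(A)>1$ without further comment.
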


\begin{proof}  Let
	\[
		F(x):= \tilde{\omega}(x) \mathcal{D}_qf (x).
	\]
We deduce from Theorem \ref{T:simple-order} that $\sigma_{\log} (F)\le \max\{\sigma_{\log}({\omega}),\, \sigma_{\log}(\mathcal{D}_qf)\}\leq \max\{2,\, \sigma_{\log}(f)\}$. Then by the Theorem \ref{T:log-lemma}, for each $\varepsilon>0$,
	\begin{equation}
		\begin{split}
			m (r,\, A) & = m\Big(r, \, \frac{\mathcal{D}_q\big[\tilde{\omega}(x)\,\mathcal{D}_q\,  f(x)\big]}{\omega\, f}\Big)\\
			&\le m\Big(r,\, \frac{\mathcal{D}_q F (x)}{F(x)}\Big)
				+m\Big(r,\, \frac{\tilde{\omega}}{\omega}\Big)
				+m\Big(r,\, \frac{\mathcal{D}_q f (x)}{f(x)}\Big)\\
			& = O\big((\log r)^{\sigma_{\log}(F)-1+\varepsilon}  \big)
				+O(\log r) +O\big((\log r)^{\sigma_{\log}(f)-1+\varepsilon}\big)\\
			&= O\big((\log r)^{\max\{1,\, \sigma_{\log}(f)-1\}+\varepsilon}  \big).
		\end{split}
	\end{equation}
Since $\sigma_{\log}(A)>1$ and $\varepsilon>0$ is arbitrary, so we deduce the desired result.
\end{proof}

We further define
	\smallskip
	\begin{equation}
			\label{E:shifted-weight-k}
			\tilde{\omega}_k(x):=\omega(x;\, a\,q^{\frac{k}{2}},\, b\,q^{\frac{k}{2}},\, c\,q^{\frac{k}{2}},\,  d\,q^{\frac{k}{2}}\,|q),
		\end{equation}
to be the $k-$shifted weight function of (\ref{E:weight}), where $k\ge 1$ and $\tilde{\omega}_0(x)=\omega(x;\, a,\, b,\, c,\, d\, |q)$.  Then Askey and Wilson \cite[(5.15)]{Askey:Wilson1985} derived a \textit{Rodrigues-type} formula:
	\begin{equation}
		\label{E:rodrigues}
			\begin{split}
			(\mathcal{D}_q)^k & \big[ \omega(x;\, a\,q^{\frac{k}{2}},\, b\,q^{\frac{k}{2}},\, c\,q^{\frac{k}{2}},\,  d\,q^{\frac{k}{2}}\,|q)\big]\\
			&=\Big(\frac{q-1}{2}\Big)^{-n} q^{-\frac12n(n-1)} \omega(x;\, a\,q^{\frac12},\, b\,q^{\frac12},\, c\,q^{\frac12},\,  d\,q^{\frac12}\,|q)
			p_n(x;\, a,\, b,\, c,\, d\,|q)
			\end{split}
	\end{equation}
which may be regarded as a higher order difference equation. We can apply a similar technique used in the last theorem to obtain the following theorem whose proof is omitted. See also \cite[Theorem 9.2]{Chiang:Feng2008}
 
\begin{theorem} Let $k\ge 1, A_j(x),\ j=0,\, 1,\, \cdots k-1$ be an entire functions such that $\sigma_{\log}(A_0)>\sigma_{\log}(A_j)\ge 1, \  j=1,\, \cdots, k-1$. Suppose that $f$ is an entire solution to the $k-$th order difference equation
\smallskip

	\begin{equation}
		\mathcal{D}^{(k)}_q  y(x)+ A_{k-1}\mathcal{D}^{(k-1)}_q\, y(x) + \cdots +
			+A_1\,\mathcal{D}_q\,  y(x) +A_0(x) \,y(x)=0.
	\end{equation}
Then $\sigma_{\log}(f)\ge \sigma_{\log}(A_0)+1$.
\end{theorem}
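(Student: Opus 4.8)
The plan is to run the argument of Theorem~\ref{T:equation} in the higher-order setting; the one genuinely new ingredient is an \emph{iterated} Askey--Wilson logarithmic difference estimate, obtained by telescoping Theorem~\ref{T:log-lemma}. First I would reduce to the interesting case: if $\sigma_{\log}(f)=+\infty$ the asserted inequality is automatic, and we may clearly assume $f\not\equiv 0$, so suppose $f$ is a nontrivial entire solution of finite logarithmic order. Solving the difference equation for the leading coefficient and dividing by $f$ gives
\[
  A_0(x)=-\sum_{j=1}^{k}A_j(x)\,\frac{\mathcal{D}_q^{(j)}f(x)}{f(x)},\qquad\text{with }A_k\equiv 1,
\]
so, since $A_0$ is entire,
\[
  T(r,\,A_0)=m(r,\,A_0)\le\sum_{j=1}^{k}m(r,\,A_j)+\sum_{j=1}^{k}m\!\Big(r,\,\frac{\mathcal{D}_q^{(j)}f}{f}\Big)+O(1).
\]

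I would then estimate the two sums separately. For $1\le j\le k-1$ one has $m(r,A_j)=T(r,A_j)=O\big((\log r)^{\sigma_{\log}(A_j)+\varepsilon}\big)$ and $m(r,A_k)=0$, so with $\mu:=\max_{1\le j\le k-1}\sigma_{\log}(A_j)<\sigma_{\log}(A_0)$ the first sum is $O\big((\log r)^{\mu+\varepsilon}\big)$. For the logarithmic difference terms, note first that $\mathcal{D}_qf\not\equiv 0$: otherwise the equation forces $A_0f\equiv 0$, hence $A_0\equiv 0$, contradicting $\sigma_{\log}(A_0)>1$. Let $j_0\in\{1,\dots,k\}$ be the largest index with $\mathcal{D}_q^{(j_0)}f\not\equiv 0$; for $j>j_0$ the term $\mathcal{D}_q^{(j)}f/f$ vanishes identically and is dropped, while for $j\le j_0$ each $g_{i-1}:=\mathcal{D}_q^{(i-1)}f$ with $i\le j$ (so $g_0=f$) satisfies $g_{i-1}\not\equiv 0$ and $\mathcal{D}_q g_{i-1}=\mathcal{D}_q^{(i)}f\not\equiv 0$. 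Telescoping,
\[
  \frac{\mathcal{D}_q^{(j)}f}{f}=\prod_{i=1}^{j}\frac{\mathcal{D}_q g_{i-1}}{g_{i-1}},
\]
and since iterating Theorem~\ref{T:simple-order} gives $\sigma_{\log}(g_{i-1})\le\sigma_{\log}(f)$, applying Theorem~\ref{T:log-lemma} to each $g_{i-1}$ yields
\[
  m\!\Big(r,\,\frac{\mathcal{D}_q^{(j)}f}{f}\Big)\le\sum_{i=1}^{j}m\!\Big(r,\,\frac{\mathcal{D}_q g_{i-1}}{g_{i-1}}\Big)=O\big((\log r)^{\sigma_{\log}(f)-1+\varepsilon}\big)
\]
for all $r$ outside an exceptional set of finite logarithmic measure (the union of the finitely many exceptional sets produced).

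Combining the pieces,
\[
  T(r,\,A_0)\le O\big((\log r)^{\mu+\varepsilon}\big)+O\big((\log r)^{\sigma_{\log}(f)-1+\varepsilon}\big)+O(1)
\]
off a set $E$ of finite logarithmic measure. Since $T(r,A_0)$ is nondecreasing and the logarithmic measure of $E$ is finite, for every large $r$ one can pick $r'\in[r,2r]\setminus E$, and the inequality at $r'$ together with $\log(2r)\sim\log r$ shows the same bound holds (up to constants) for all large $r$. Taking the $\limsup$ of $\log T(r,A_0)/\log\log r$ then gives $\sigma_{\log}(A_0)\le\max\{\mu+\varepsilon,\ \sigma_{\log}(f)-1+\varepsilon\}$ for every $\varepsilon>0$; letting $\varepsilon\to 0$ and using $\mu<\sigma_{\log}(A_0)$ forces $\sigma_{\log}(A_0)\le\sigma_{\log}(f)-1$, i.e.\ $\sigma_{\log}(f)\ge\sigma_{\log}(A_0)+1$.

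The step demanding the most care is the iterated logarithmic difference estimate: one must verify the non-vanishing hypotheses $\mathcal{D}_q^{(i)}f\not\equiv 0$ needed to invoke Theorem~\ref{T:log-lemma} at each stage of the telescope (handled by introducing the cutoff index $j_0$ above) and confirm that passing to a finite union of exceptional sets preserves finiteness of logarithmic measure. Everything else is routine bookkeeping parallel to Theorem~\ref{T:equation} and to \cite[Theorem~9.2]{Chiang:Feng2008}.
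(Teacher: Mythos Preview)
Your argument is correct and is precisely the approach the paper has in mind: the paper omits the proof but explicitly says one applies ``a similar technique used in the last theorem'' (Theorem~\ref{T:equation}) together with \cite[Theorem~9.2]{Chiang:Feng2008}, and your telescoping of $\mathcal{D}_q^{(j)}f/f$ into a product of successive Askey--Wilson logarithmic differences, combined with Theorems~\ref{T:log-lemma} and~\ref{T:simple-order}, is exactly that technique. Your handling of the non-vanishing hypotheses via the cutoff index $j_0$ and of the finitely many exceptional sets is the right bookkeeping, so nothing further is needed.
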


\section{Concluding remarks}

We have shown in this paper that the $\mathrm{AW-}$operator naturally induces a version of difference value distribution theory on meromorphic functions of finite logarithmic order of growth. Although the finite  logarithmic order growth appears to be restrictive, it turns out that this  class of functions contains a large family of meromorphic functions, including the Jacobi theta functions and theta-like functions and also many $q-$series type special functions.

In particular, a Picard-type theorem based on the $\mathrm{AW-}$operator is derived. For any complex $a$, instead of the classical Nevanlinna theory in which the Nevanlinna deficiency $\delta(a)$ plays an important role, we have shown that it is the  $\Theta_{\mathrm{AW}}(a)$  which corresponds to what we used to call the {ramification index} that plays the crucial role in our $\mathrm{AW-}$Nevanlinna theory. It appears to be a proper index to consider when dealing with function theoretic problems on finite differences in general and $\mathrm{AW-}$difference operator in particular. As a result, we have called the $\Theta_{\mathrm{AW}}(a)$, where $0\le \Theta_{\mathrm{AW}}(a)\le 1$, the $\mathrm{AW-}$deficiency and showed that $\sum_{a\in\mathbb{C}}\Theta_{\mathrm{AW}}(a)\le 2$ in this paper. Our new Picard theorem says that if a slow-growing (finite logarithmic order) meromorphic function $f$ has three such $\mathrm{AW-}$deficient values, then $f$ belongs to $\ker\D$. Special cases of an $a-$point being a $\mathrm{AW-}$deficient value of $f$ include when the
pre-image of an $a-$point lies on an infinite sequence of the type (\ref{E:AW-except}). Thus, although the equation $f(x)=a$ has infinitely many solutions, our theory suggests us to interpret these $a-$points as if they are not present in the sense of Askey-Wilson. We have also given an alternative derivation of functions that lie in the $\ker\D$. Unlike the kernel of conventional differential operator, the $\ker\D$ is non-trivial. As a consequence, we have derived a number of relationships exist amongst families of  $q-$infinite products.

 Although one can write down an infinite convergent series given by the $\mathrm{AW-}$Taylor-expansion (\ref{E:taylor}) in terms of the $\mathrm{AW-}$basis, little is is known about the value distribution of those functions. One such example is given by Koelink and Stokman in \cite{Koelink:Stokman2001a} where they constructed a transcendental function solution to (\ref{E:AW-eqn}) which is linearly independent to the Askey-Wilson polynomials (\ref{E:AW-poly}). This transcendental function
were further studied in \cite{Koelink:Stokman2001b}. But we still do not know its logarithmic order. Needless to say that much less is known about the value distribution properties of other transcendental meromorphic functions associated with the $\mathrm{AW-}$operator. Our $\mathrm{AW-}$Nevanlinna theory allows us to understand a little more. For example, the generating function $H(x)$ (\ref{E:generating}) for $q-$ultraspherical polynomials found by Rogers mentioned earlier has zero-sequence and pole sequences as described by  (\ref{E:Rogers-zeros}) and (\ref{E:Rogers-poles}) respectively. However, it has $\Theta_{\mathrm{AW}}(0)=1$ and $\Theta_{\mathrm{AW}}(\infty)=1$ under our interpretation. Thus the $H(x)$ can be regarded as $\mathrm{AW-}$zero-scarce and pole-scarce. On the other hand, the $H(x)$ is not in the form (\ref{E:q-constant}) described by the Theorem \ref{T:ker-mero}, so it
does not belong to the $\mathcal{D}_q$. Hence the $H(x)$ must assume all $a\not= 0,\, \infty$ infinitely often in the sense of Askey-Wilson.

We recall that a function is called a \textit{polynomial} if the function is annihilated after repeated application of conventional  differentiation a finite number of times. Thus, those functions that are annihilated after a differentiation are called \textit{constants}. If we replace the differential operator by the $\mathrm{AW-}$operator, then the Theorem \ref{T:ker-mero} shows that apart from the conventional constants, there are also \textit{constants} (given by (\ref{E:q-constant})) with respect to the  $\mathrm{AW-}$operator. So it is natural to ask what are the \textit{polynomials} and \textit{transcendental} with respect to the  $\mathrm{AW-}$operator. Since even the class of $\mathrm{AW-}$\textit{constants} consists of a rich collection of conventional transcendental meromorphic functions, thus it can be anticipated that these \textit{polynomials} should be rich and worth exploration.

The classical Picard theorem and Nevanlinna theory are about a particular way of counting zeros/poles and their multiplicities about a meromorphic function with respect to the basis $\{x^n\}\  (-\infty\le n  <+\infty)$ which is natural with respect to the derivative operator. However, the natural basis for a difference operator is not the usual $\{x^n\}$. It is known that some natural bases for difference operator $\Delta f(x)=f(x+c)-f(c)$ and the  $\mathrm{AW-}$operator $\D f$ are, respectively,
	\begin{enumerate}	
		\item \textrm{The Netwon basis}: $p_n(x)=x(x+1)\cdots (x+n-1)$
		\bigskip
		\item the $\mathrm{AW-}$basis: $\phi(x;\, a)_n=(ae^{i\theta},\, ae^{-i\theta};\, q)_n$,
	\end{enumerate}
when $n\ge 0$. However, when defined in an appropriate manner, they can be extended to the full-range $(-\infty< n  <+\infty)$. 
Thus it may be more appropriate to establish the various Nevanlinna theories for difference operators with respect to their natural interpolatory bases, and therefore this includes finding their appropriate \textit{residue calculus}.

\appendix
%\section{A summary of Nevanlinna Theory}
\section{Proof of theorem \ref{T:meromorphic} } \label{S:meromorphic}
Although the Askey-Wilson operator is defined on basic hypergeometric polynomials in their original memoir \cite{Askey:Wilson1985}, it follows from a terminating sum of (\ref{E:taylor}), that one can write $x^n$ explicitly in terms of $\{\phi_n(\cos\theta;\, a)\}$, together with (\ref{E:diff-basis}) show that the Askey-Wilson operator will reduce the a degree $n$ polynomial $f(x)$ to degree $n-1$.  Alternatively, one can verify this directly:
\begin{align*}
%\label{E:poly-basis}
	&[(q^{1/2}-q^{-1/2})i\sin\theta]\, \mathcal{D}_qx^k =
	(q^{1/2}z-q^{-1/2}z^{-1})^k-(q^{-1/2}z-q^{1/2}z^{-1})^k\\
%	&=\sum_{j=0}^k\binom{k}{j}q^{-(k-2j)/2}z^{-(k-2j)}-
%	\sum_{j=0}^k\binom{k}{j}q^{(k-2j)/2}z^{-(k-2j)}\notag\\
%	&\stackrel{j^\prime=k-j}{=} \sum_{j=0}^k\binom{k}{j}q^{-(k-2j)/2}z^{-(k-2j)}
%	+\sum_{j^\prime=0}^k\binom{k}{j^\prime}q^{-(k-2j^\prime)/2}z^{-(2j^\prime-k)}\notag\\	
%	&=\sum_{j=0}^k\binom{k}{j}q^{-(k-2j)/2}\big(z^{(k-2j)}-z^{-(k-2j)}\big)\notag\\
	&=\sum_{j=0}^k\binom{k}{j}q^{-(k-2j)/2}(2i)\sin(k-2j)\theta.\notag
\end{align*}
Hence
\begin{equation}
\label{E:poly-basis}
	\mathcal{D}_qx^k %= \sum_{j=0}^k\binom{k}{j}q^{-(k-2j)/2}\sin(k-2j)\theta(2i)/
%	\big[(q^{1/2}-q^{-1/2})(z-1/z)/2\big]\\
%	&= \sum_{j=0}^k\binom{k}{j}2q^{-(k-2j)/2}\sin(k-2j)\theta/
%	\big[(q^{1/2}-q^{-1/2})\sin\theta\big]\notag\\
	=\sum_{j=0}^k\binom{k}{j}\frac{2q^{-(k-2j)/2}}{q^{1/2}-q^{-1/2}}\,
	U_{k-2j}(x)\notag,
%	&=\sum_{j=0}^k\binom{k}{j}\frac{2q^{-(k-2j)/2}}{q^{1/2}-q^{-1/2}}
%	U_{k-2j}(x)\notag,
\end{equation}
where $x=\cos\theta$ and $U_k$ is the \textit{Chebyshev polynomial of the second kind}, shows that $\mathcal{D}_qx^k$ is indeed a polynomial in $x$.  
 
Let $f(x)=\sum_{k=0}^\infty a_kx^k$ be entire so  $f_N(x):=\sum_{k=N}^\infty a_kx^k\rightarrow 0$ uniformly on any compact subset of $\mathbb{C}$ as $N\to +\infty$. Thus it is clear that both $\breve{f_N}(q^{1/2}z)$ and $\breve{f_N}(q^{-1/2}z)$, and hence $\mathcal{D}_q(\sum_{k=N}^\infty a_kx^k\big)\rightarrow 0$ uniformly on any compact subset of $\mathbb{C}$ as $N\to +\infty$. Thus $\mathcal{D}_qf=\mathcal{D}_q\big(\sum_{k=0}^\infty a_kx^k\big)$ is analytic and hence entire.
\medskip

We would like to extend the definition of $\mathcal{D}_q$ to meromorphic functions. To do so we first establish that given $f(x)$ entire, then so is the 
\begin{equation*}
%\label{E:average-operator}
		(\mathcal{A}_qf)(x)=\frac12\big[\breve{f}(q^\frac12z)+
		\breve{f}(q^{-\frac12}z)\big],
\end{equation*}
which is called the \textit{averaging operator} \cite[p. 301]{Ism2005}. In view of the argument to justify the analyticity of $\mathcal{D}_qf$ above, it suffices to show that $\mathcal{A}_qx^k$ is again a polynomial in $x$. Since
\begin{align}
	\mathcal{A}_qx^k
%&=
%\frac12\Big\{	x^k\big[(q^{1/2}z+q^{-1/2}/z)/2\big]+x^k\big[(q^{-1/2}z+q^{1/2}/z)/2\big]\Big\}\notag\\
	&=\frac12\Big\{
	(q^{1/2}z+q^{-1/2}/z)^k+(q^{-1/2}z+q^{1/2}/z)^k\Big\}\notag\\
%	&=\frac12\Big\{\sum_{j=0}^k\binom{k}{j}q^{j/2}z^jq^{-(k-j)/2}z^{-(k-j)}+
%	\sum_{j=0}^k\binom{k}{j}q^{j/2}z^jq^{-(k-j)/2}z^{-(k-j)}\Big\}\notag\\
%	&=\frac12\Big\{\sum_{j=0}^k\binom{k}{j}q^{-(k-2j)/2}\big(z^{(k-2j)}+z^{-(k-2j)}\big)\Big\}\notag\\
&=
	\sum_{j=0}^k\binom{k}{j}q^{-(k-2j)/2}\cos(k-2j)\theta=\sum_{j=0}^k\binom{k}{j}q^{-(k-2j)/2}T_{k-2j}(x),\notag
\end{align}
where the $T_k$ is the \textit{Chebyshev polynomial of the first kind}.
Thus $\mathcal{A}_qf(x)$ is again entire.

Moreover, one can check easily that 
\begin{equation*}
%\label{E:one-over}
	(\mathcal{D}_q1/f)(x)=\frac{-(\mathcal{D}_qf)(x)}{f(q^{1/2}z+
	q^{-1/2}/z)f(q^{-1/2}z+q^{1/2}/z)}
\end{equation*}
(We note that it tends to $-f^\prime(x)/f^2(x)$ as $q\to 1$.) We consider
\begin{align}
\label{E:product-1}
		&f\big(q^{1/2}z+q^{-1/2}/z)/2\big)
		f\big(q^{-1/2}z+q^{1/2}/z)/2\big)\\
		&=
		\Big(\sum_{k=0}^\infty\frac{a_k}{2^k}\big(q^{1/2}z+q^{-1/2}/z\big)^k\Big)
		\Big(\sum_{k^\prime=0}^\infty\frac{a_{k^\prime}}{2^{k^\prime}}\big(q^{-1/2}z+q^{1/2}/z\big)^{k^\prime}\Big)\notag\\
		&=\sum_{k=0}^\infty\sum_{k^\prime=0}^\infty
		\frac{a_ka_{k^\prime}}{2^{k+k^\prime}}\big(q^{1/2}z+q^{-1/2}/z\big)^k\big(q^{-1/2}z+q^{1/2}/z\big)^{k^\prime}.\notag
\end{align}

We only need to consider the cases when $k\not=k^\prime$. Suppose $k>k^\prime$. We note the following term in the (\ref{E:product-1}).
\begin{align}
\label{E:product-2}
	&(q^{1/2}z+q^{-1/2}z^{-1})^k(q^{-1/2}z+q^{1/2}z^{-1})^{k^\prime}
	+(q^{1/2}z+q^{-1/2}z^{-1})^{k^\prime}(q^{-1/2}z+q^{1/2}z^{-1})^k\\
	&=(q^{1/2}z+q^{-1/2}z^{-1})^{k^\prime}(q^{-1/2}z+q^{1/2}z^{-1})^{k^\prime}\notag\\
	&\qquad\times
	\big[(q^{1/2}z+q^{-1/2}z^{-1})^{k-k^\prime}+(q^{-1/2}z+q^{1/2}z^{-1})^{k-k^\prime}\big]\notag\\
	&=(x^2+q+q^{-1}-2)^{k^\prime}\sum_{j=0}^{k-k^\prime}\binom{k-k^\prime}{j}2q^{(2j-k-k^\prime)/2}\cos(k+k^\prime-2j)\theta\notag.
\end{align}
But $\cos(k+k^\prime-2j)\theta=T_{k+k^\prime-2j}(x)$ is a polynomial in $x$, so that 
	\[
	f\big(q^{1/2}z+q^{-1/2}/z)/2\big)
		f\big(q^{-1/2}z+q^{1/2}/z)/2\big)
	\]
is again entire. Thus $\mathcal{D}_q\big(1/f\big)$ is meromorphic.
We note that $f$  can be represented as a quotient $f=g/h$ where both $g$ and $h$ are entire. Since 
\begin{equation*}
%\label{E:one-over}
	(\mathcal{A}_q1/h)(x)=\frac{(\mathcal{A}_qh)(x)}{h(q^{1/2}z+
	q^{-1/2}/z)h(q^{-1/2}z+q^{1/2}/z)},
\end{equation*}
 it is now easy to see that $\mathcal{A}_q1/h$ is meromorphic. Then, we deduce from the quotient rule \cite[p. 301]{Ism2005}
	\[
%\label{E:product}
	\mathcal{D}_q(g/h)=(\mathcal{A}_qg)(\mathcal{D}_q1/h)+(\mathcal{A}_q1/h)(\mathcal{D}_qg)
	\]
%where
%\[
%		(\mathcal{A}_qf)(x)=\frac12\big[\breve{f}(q^\frac12z)+
%		\breve{f}(q^{-\frac12}z)\big]
%\]
that $\mathcal{D}_qf$ is a meromorphic function.

%\vskip1cm 
\medskip
\noindent\textbf{Acknowledgement} The authors would like to express their sincere thanks to the referees for his/her valuable comments that help to improve the main results of this paper.
The first author would like to acknowledge the hospitality that he received during his visits to the Academy of Mathematics and Systems Sciences, of Chinese Academy of Sciences. Similarly he would also like to thank George Andrews for his hospitality and useful discussions during a visit to Pennsylvania State University. Finally, the first author would also like to thank his colleague T. K. Lam for useful discussions. %\medskip

\bibliographystyle{amsplain}

\end{document}